\documentclass[reqno,10pt]{amsart}

\usepackage{a4wide}
\usepackage{color}
\usepackage[T2A]{fontenc}
\usepackage[utf8]{inputenc}
\usepackage[english]{babel}
\numberwithin{equation}{section}

\usepackage{amsmath, amsfonts, amssymb, amsthm, mathtools, bm, bbm, esint, mathrsfs, nccmath, upgreek, thmtools}
\usepackage[backend=biber,style=numeric-comp,sortcites,giveninits,doi=false,isbn=false,url=false,eprint=false, maxbibnames=99]{biblatex}

 \usepackage{relsize}

\usepackage[colorlinks,citecolor=blue,linkcolor=red]{hyperref}
\usepackage[noabbrev,capitalise,nameinlink]{cleveref}
\hypersetup{colorlinks={true},linkcolor={red},citecolor=blue}
\usepackage{csquotes}

\makeatletter
\newcommand{\bg}{\bBigg@{0.5}}
\newcommand{\biggg}{\bBigg@{3}}
\newcommand{\Biggg}{\bBigg@{3.5}}
\newcommand{\bigggg}{\bBigg@{4}}

\makeatother

\usepackage{graphicx}
\usepackage{resmes}
\newcommand{\rmes}{\,{\mathlarger{\resmes}}}

\AtBeginBibliography{\footnotesize\sloppy}

\DeclareNameAlias{author}{family-given}

\theoremstyle{plain}
\newtheorem{theo}{Theorem}[section]
\newtheorem{lem}[theo]{Lemma}
\newtheorem{prop}[theo]{Proposition}

\theoremstyle{definition}
\newtheorem{defi}[theo]{Definition}
\newtheorem{rem}[theo]{Remark}
\newtheorem{ques}[theo]{Question}
\newtheorem{conj}[theo]{Conjecture}

\usepackage{scalerel}

\newcommand{\lims}{\varlimsup}

\newcommand{\1}{\mathbbm{1}}
\newcommand{\G}{\mathit{\Gamma}}
\newcommand{\Cur}{\mathrm{Cur}}
\renewcommand{\o}{\mathrm{o}}
\newcommand{\Fr}{\mathrm{Fr}}
\newcommand{\GL}{\mathrm{GL}}
\newcommand{\GDis}{\operatorname{{\kern-0.1em}GDis}}
\newcommand{\ADis}{\operatorname{{\kern-0.1em}ADis}}
\newcommand{\Sel}{\operatorname{{\kern-0.1em}Sel}}
\newcommand{\Dual}{\operatorname{{\kern-0.1em}Dual}}
\newcommand{\BLIP}{\mathrm{BLIP}}
\newcommand{\Mat}{\mathbb{M}}
\newcommand{\X}{\mathsf{X}}

\newcommand{\Nat}{\mathbb{N}}
\newcommand{\Y}{\mathsf{Y}}

\newcommand{\dX}{\mathsf{d}_{\mathsf{X}}}
\newcommand{\dY}{\mathsf{d}_{\mathsf{Y}}}

\newcommand{\D}{\mathrm{d}}
\newcommand{\Cl}{\mathrm{Cl}}
\newcommand{\Aue}{\mathrm{Aue}}
\newcommand{\Leb}{\mathrm{L}}

\newcommand{\norm}{\mathsf{n}}
\newcommand{\Norm}{\mathsf{N}}
\newcommand{\ms}{\operatorname{{\kern-0.1em}\mathsf{ms}}}
\newcommand{\ev}{\mathsf{ev}}
\newcommand{\Mod}{\mathrm{Mod}}

\newcommand{\dom}{\mathrm{dom}}
\newcommand{\im}{\mathrm{im}}

\newcommand{\Mas}{\operatorname{{\kern-0.1em}\mathbf{M}}}
\newcommand{\Len}{\operatorname{{\kern-0.1em}Len}}

\newcommand{\Dif}{\mathcal{D}}
\newcommand{\m}{\mathfrak{m}}
\newcommand{\M}{\mathscr{M}}

\newcommand{\R}{\mathbb{R}}
\newcommand{\e}{\varepsilon}
\renewcommand{\P}{\mathrm{P}}
\newcommand{\LIP}{\mathrm{LIP}}

\makeatletter
\let\save@mathaccent\mathaccent
\newcommand*\if@single[3]{%
  \setbox0\hbox{${\mathaccent"0362{#1}}^H$}%
  \setbox2\hbox{${\mathaccent"0362{\kern0pt#1}}^H$}%
  \ifdim\ht0=\ht2 #3\else #2\fi
  }
\newcommand*\rel@kern[1]{\kern#1\dimexpr\macc@kerna}
\newcommand*\widebar[1]{\@ifnextchar^{{\wide@bar{#1}{0}}}{\wide@bar{#1}{1}}}
\newcommand*\wide@bar[2]{\if@single{#1}{\wide@bar@{#1}{#2}{1}}{\wide@bar@{#1}{#2}{2}}}
\newcommand*\wide@bar@[3]{%
  \begingroup
  \def\mathaccent##1##2{%
    \let\mathaccent\save@mathaccent
    \if#32 \let\macc@nucleus\first@char \fi
    \setbox\z@\hbox{$\macc@style{\macc@nucleus}_{}$}%
    \setbox\tw@\hbox{$\macc@style{\macc@nucleus}{}_{}$}%
    \dimen@\wd\tw@
    \advance\dimen@-\wd\z@
    \divide\dimen@ 3
    \@tempdima\wd\tw@
    \advance\@tempdima-\scriptspace
    \divide\@tempdima 10
    \advance\dimen@-\@tempdima
    \ifdim\dimen@>\z@ \dimen@0pt\fi
    \rel@kern{0.6}\kern-\dimen@
    \if#31
      \overline{\rel@kern{-0.6}\kern\dimen@\macc@nucleus\rel@kern{0.4}\kern\dimen@}%
      \advance\dimen@0.4\dimexpr\macc@kerna
      \let\final@kern#2%
      \ifdim\dimen@<\z@ \let\final@kern1\fi
      \if\final@kern1 \kern-\dimen@\fi
    \else
      \overline{\rel@kern{-0.6}\kern\dimen@#1}%
    \fi
  }%
  \macc@depth\@ne
  \let\math@bgroup\@empty \let\math@egroup\macc@set@skewchar
  \mathsurround\z@ \frozen@everymath{\mathgroup\macc@group\relax}%
  \macc@set@skewchar\relax
  \let\mathaccentV\macc@nested@a
  \if#31
    \macc@nested@a\relax111{#1}%
  \else
    \def\gobble@till@marker##1\endmarker{}%
    \futurelet\first@char\gobble@till@marker#1\endmarker
    \ifcat\noexpand\first@char A\else
      \def\first@char{}%
    \fi
    \macc@nested@a\relax111{\first@char}%
  \fi
  \endgroup
}
\makeatother

\newcommand{\wbar}{\widebar}
\makeatother

\title[On one relaxation of the BLD condition]{On one relaxation\\of the bounded-length-distortion condition\\
in the context of metric measure spaces}

\author[R. D. Oleinik]{Roman D. Oleinik}

\address{{International School for Advanced Studies, Trieste, Italy}\newline
\indent {Moscow Institute of Physics and
Technology, Dolgoprudny, Russia}
}

\email{oleinik.r@phystech.edu}

\usepackage{tcolorbox}
\begin{document}
\begin{abstract}
We reformulate the bounded-length-distortion condition for maps between metric spaces in a certain relaxed form that requires the presence of a reference measure on the source space, which makes the new approach more natural from the perspective of maps from metric measure spaces to metric spaces. In terms of the introduced notion, we establish some mapping results in an entirely singular setting of the following general structure: a metric measure space of finite Hausdorff dimension admits a map with the relaxed bounded-length-distortion condition into a finite-dimensional normed space.
\end{abstract}
\maketitle
\allowdisplaybreaks

\tableofcontents

\newpage
\section{Introduction}
\subsection{Background information}\label{ss:BackInf}
One of the most famous mathematical results is embodied by the Nash embedding theorems (established by J. Nash), a version of which (in a sense, not the most powerful but the most geometrical one) can be formulated in the following way (see \cite[Theorem 2]{N54}). For a smooth manifold $\mathsf{M}$ of dimension $d\in \Nat_0$ equipped with the metric $\mathsf{d}_{\mathsf{M}}$ induced by a $\mathrm{C}^1$-smooth Riemannian structure on $\mathsf{M}$, there exists a $\mathrm{C}^1$-smooth map $\varphi\colon \mathsf{M}\to \mathbb{R}^{2d+1}$ that is also a topological embedding such that
\begin{equation}
    \Len_{\mathsf{p}_2}(\varphi\circ \gamma)=\Len_{\mathsf{M}}(\gamma)\qquad \text{for every $\gamma\in \Cur(\mathsf{M})$}.
\end{equation}
Here and further, the following notation is adopted: given a metric space $\X=(\X,\dX)$, we denote, by $\Cur(\X)$, the family of all curves in $\X$, i.e., continuous maps from compact segments of the real line to $\X$, and, by $\Len_{\X}$, the length functional on $\Cur(\X)$ induced by $\dX$; also, given $D\in \Nat_0$, we mean, by $\R^D$, the $D$-dimensional real coordinate space and, given a norm $p$ on $\R^D$, by $\R^D_p$, the space $\R^D$ equipped with the metric induced by $p$ and, by $\Len_p$, the length functional $\Len_{\R^D_p}$; finally, given $r\in [1,+\infty]$, by $\mathsf{p}_r$, we understand the $\ell_r$-norm on finite-dimensional real coordinate spaces. Thus, the essence of the presented statement is that, from the perspective of lengths of curves, any Riemannian manifold can be viewed as an embedded submanifold of some Euclidean space.

One can abstract the conclusion of the provided theorem into the following notion (see, e.g., \cite[Section 1]{LD13}). For given metric spaces $\X=(\mathsf{X},\dX)$ and $\Y=(\Y,\dY)$, a map $\varphi\colon \X\to \Y$ is called a path-isometry if it is continuous and
\begin{equation}
    \Len_{\Y}(\varphi\circ \gamma)=\Len_{\X}(\gamma)\qquad \text{for every $\gamma\in \Cur(\X)$}.
\end{equation}
As follows directly from this definition, path-isometries form a (typically much larger) superclass of isometries, somewhat better adapted to the ``curvewise'' point of view. A side yet useful observation here is that, given that the source space of a path-isometry is, for instance, length, this map can be deduced to be $1$-Lipschitz.

One classical problem in the theory of metric spaces lies in determining whether a given metric space admits an isometry into a metric space from a certain class; a particular choice for such a one is that of Euclidean spaces. Replacing here correspondingly isometries with path-isometries, we arrive at another well-known problem, which we display separately below.
\begin{ques}\label{ques:PI}
    What metric spaces admit a path-isometry (with predetermined additional properties) into a Euclidean space?
\end{ques}
\noindent Accordingly, the Nash theorem in the presented form answers positively to the above question for smooth Riemannian manifolds equipped with the natural metric. To the current moment, quite many other results in this direction were obtained. As one of such, we mention the following statement established by M. Gromov (see, e.g., \cite[Section 2.4.11]{G86}). For a given smooth manifold $\mathsf{M}$ of dimension $d\in \Nat_0$ equipped with the metric $\mathsf{d}_{\mathsf{M}}$ induced by a $\mathrm{C}^1$-smooth Riemannian structure on $\mathsf{M}$, there exists a path-isometry $\varphi\colon \mathsf{M}\to \mathbb{R}^d_{\mathsf{p}_2}$; note the same dimension here in contrast to the Nash theorem. As a further extension of that, A. Petrunin obtained a completely analogous conclusion for metrics on smooth manifolds induced by $\mathrm{C}^1$-smooth sub-Riemannian structures (see \cite[Theorem 1.2]{P11} and the discussion thereafter). In turn, a full sub-Riemannian version of exactly the Nash theorem, i.e., with the requirement for a map to be moreover a topological embedding, was then achieved by E. Le Donne (see \cite[Corollary 2.5]{LD13}).

At the same time, an adjacent problem in the above-mentioned setting consists in finding a corresponding biLipschitz map, rather than an isometry, of the desired form. Ideologically, one replaces here the preservation of the distances between points by a two-sided control on these distances. And an analogous shift can be done for path-isometries as well, which results in the following notion (see, e.g., \cite[Definition 2.10]{LD13}). Given metric spaces $\X=(\X,\dX)$ and $\Y=(\Y,\dY)$, a map $\varphi\colon \X\to \Y$ is called a bounded-length-distortion map, or a BLD map, if it is continuous and there is $c\in (0,+\infty)$ such that
\begin{equation}
    \frac{1}{c}\Len_{\X}(\gamma)\leq\Len_{\Y}(\varphi\circ \gamma)\leq c\Len_{\X}(\gamma)\qquad \text{for every $\gamma\in \Cur(\X)$}.
\end{equation}
Once this definition is formulated, by analogy with \cref{ques:PI}, the following problem naturally emerges.
\begin{ques}\label{ques:BLD}
    What metric spaces admit a BLD map (with predetermined additional properties) into a Euclidean space?
\end{ques}
\noindent As pointed out by E. Le Donne in \cite[Section 2]{LD13}, since any sub-Finsler metric on a compact smooth manifold is biLipschitz equivalent to a sub-Riemannian metric, any metric space of the former form (by the Petrunin theorem) admits a BLD map into a Euclidean space, the fact of which provides a broad class of spaces for which the answer to \cref{ques:BLD} is positive. As a potential far-reaching generalization of this observation, the following conjecture (within the very same work) was put forward.
\begin{conj}
    Any compact, length metric space of finite Hausdorff dimension admits a BLD map that is also a topological embedding into a Euclidean space.
\end{conj}
\noindent The relevant part here, once the result in \cite[Theorem 2.9]{LD13} of a more topological flavor is taken into account, is exactly the existence of a BLD map into a Euclidean space. And, to our knowledge, there is not much progress in solving this problem beyond the smooth category, even under various additional restrictions on the source space.

As a clarification for the reader concerning what comes next, we underline the following circumstance. Although the above information is indicated here, it should be considered as more of auxiliary data that is quite useful to keep in mind throughout the manuscript rather than a direct topic of the discussion therein. Even more, the possibility to relate our eventual results with the presented matter was revealed mostly post factum, i.e., after the former ones were obtained in a slightly different guise. Still, we believe this overview is worth being exhibited at this juncture, as it allows one to observe the forthcoming narrative from a much more general context.

\subsection{Framework and results}\label{ss:FrR} In the present work, we wish to address certain problems that can be viewed as loose variations of those described in the previous subsection. Specifically, we choose to invoke an additional structure to a given source metric space, namely, to add a measure thereon into play, and to relax desirable statements in a suitable way. This approach is strongly motivated by the success of recent decades in the theory of Sobolev spaces in the singular setting of metric measure spaces. For the detailed information on the subject, we send the reader to \cite{GP20} and \cite{HKST15}, as well as other references therein.

In the Sobolev theory, especially in the context of metric measure spaces, one of the most effective methods of examining first-order properties of functions on a space consists in looking at their behavior along curves therein; for such a purpose though, only ``generic'', rather than all, curves are typically taken into account, which is a standard caveat regarding Sobolev functions. This ``genericity'' is measured (at least in the so-called Newtonian approach) through the notion of modulus, which forms a certain outer measure on the family of all curves in the space, which, in turn, depends itself on the chosen reference measure. In light of all this, it seems sufficiently natural to apply a similar relaxation to \cref{ques:PI} and \cref{ques:BLD}. Thus, unifying and generalizing both of these questions, we can formulate the following (quite vast) one, which should be considered primarily as a reference point for the subsequent exposition.
\begin{ques}\label{ques:GenBLD}
    Let $\X=(\X,\dX)$ be a metric space, let $\mathfrak{C}$ be an outer measure on $\Cur(\X)$. Let $a,b\in (0,+\infty)$, $d\in \mathbb{N}_0$, let $p$ be a norm on $\mathbb{R}^d$. Under what conditions on the objects involved there exists a continuous map $\varphi\colon \X\to \mathbb{R}^d$ such that
    \begin{equation}\label{eq:GenBLD}
        a\Len_{\X}(\gamma)\leq \Len_p(\varphi\circ \gamma)\leq b\Len_{\X}(\gamma)\qquad \text{for $\mathfrak{C}$-a.e. $\gamma\in \Cur(\X)$}.
    \end{equation}
\end{ques}
\noindent Above, in line with the definitions of path-isometries and BLD maps in \cref{ss:BackInf}, the separate continuity condition is added to guarantee that the pre-compositions of the map with curves are still curves.

The formulation of \cref{ques:GenBLD} is clearly too broad to attack, so we start with some specification. First, for the rest of the subsection, let $\X=(\X,\dX)$ be a separable, complete metric space and let $\m$ be a locally finite, Borel measure on $\X$; this provides us with the metric measure space $(\X,\m)$. And then, by $\Mod^{\m}_{\infty}$, we denote the $\infty$-modulus on $(\X,\m)$, which, as already mentioned, is a special outer measure on $\Cur(\X)$; its precise definition is given in \cref{ss:MgCf}. As the notation suggests, there are also other moduli, which are even more common in the corresponding literature, namely the moduli $\Mod_r^{\m}$, $r\in [1,+\infty)$; the chosen one is notably the finest among those, i.e., it has fewer negligible sets. With all this, we now possess all the ingredients required by \cref{ques:GenBLD}.

At the current stage, however, we are not able to provide relevant statements (under the selected entries) in the direction of \cref{ques:GenBLD} as it is. Nevertheless, once we implement to the discussion one more relaxation, we can already achieve several curious results that still appear to be sufficiently useful. Specifically, we need to sacrifice the continuity property of the desired map, which, of course, forces us to make some corresponding modifications. Namely, since the pre-composition of a general map between metric spaces with a curve may not be a curve anymore, the length functional becomes somewhat unsuitably defined on such compositions. To resolve this issue, we appropriately extend this functional to ``discontinuous curves'', with the preservation of the previous notation though; see \cref{ss:MgCf} for the precise meaning behind that. This extension can be seen roughly as the standard total variation (which is known to make sense for arbitrary metric-valued maps defined on segments of the real line) from which the contribution of ``jumps'' (or any other kind of discontinuities) is excluded. In other words, with this choice and after a bit of reasoning, inequalities as in \eqref{eq:GenBLD} can be interpreted as a control on the change of the $1$-dimensional Hausdorff measure (counted with multiplicity) on the given curve under the given map; all this is elaborated further in \cref{ss:MgCf}. As the reader can detect therewith, this series of simplifications turns the problem under the scope into an essentially ``local'' one, which is, in turn, predictably much easier to tackle by nature.

In spite of the described adjustment, maps emerging within our results are not fully arbitrary and even possess a certain regularity. To be explicit, we are to deal with so-called equi-Luzin--Lipschitz maps, which can be seen as a weakened analog of Lipschitz maps; see \cref{def:LuzLip} for the precise meaning of that. In general, by a Luzin--Lipschitz map, one understands a map from a metric measure space to a metric space for which there is a countable decomposition of the source space into measurable subsets (up to a set of zero measure) such that the map is Lipschitz on each of these subsets. For instances of use of this notion in such a context, we refer the reader to \cite[Section 2.1]{GT21} and \cite[Definition 2.11]{O25}. Strengthening this definition, one can say that such a map is equi-Luzin--Lipschitz if such a decomposition can be chosen so that, for some $L\in (0,+\infty)$, all the corresponding restrictions are $L$-Lipschitz. In this regard, it would be interesting to investigate whether the main theorems below, where exactly equi-Luzin--Lipschitz maps are in place, can be helpful in obtaining stronger statements with more regular maps involved, potentially with some extra assumptions on the source spaces.

With all of the above being said, we are ready to present the first main result of our manuscript. It can be (quite roughly) viewed as a nonsmooth adaptation to the chosen setting of the earlier-mentioned Gromov--Petrunin theorems in the corresponding ``Finsler'' form. As an explanation for the latter term, we refer to the observation that these theorems, while originally established for Riemannian and sub-Riemannian manifolds, admit also some natural analogs in the context of Finsler and sub-Finsler manifolds, which are notably irrespective to any ``Euclideanity'' of the source and target spaces. And basically the same attribute should be noticed to be inherited by the below statement as well.
\begin{theo}\label{theo:MainRes}
    Suppose the Hausdorff dimension of $\X$ is not greater than $d\in \mathbb{N}_0$. Then, for any $\varepsilon\in (0,1)$, there exists a $1$-Luzin--Lipschitz, Borel map $\varphi_{\varepsilon}\colon \X\to \mathbb{R}_{\mathsf{p}_{\infty}}^d$ with the following property: 
    \begin{equation}\label{eq:MainRes}
       (1-\varepsilon)\Len_{\X}(\gamma)\leq \Len_{\mathsf{p}_1}(\varphi_{\varepsilon}\circ \gamma)\qquad \text{for $\Mod_{\infty}^{\m}$-a.e. $\gamma\in \Cur(\X)$}.
    \end{equation}
\end{theo}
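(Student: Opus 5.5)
The plan is to build $\varphi_\varepsilon$ by gluing countably many Lipschitz coordinate maps, each adapted to a piece of $\X$ on which a fixed list of $d$ distance functions controls the local length of curves. This is where the target norms enter: each coordinate is $1$-Lipschitz, so $\varphi_\varepsilon$ is $1$-Lipschitz into $\mathsf{p}_\infty$ on each piece, while lengths are summed coordinatewise in $\mathsf{p}_1$.

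\textbf{Step 1 (local model).} Fix $\varepsilon$. For a point $x$ and a scale $r$, look at the $d$-tuple of functions $f_i=\dX(\cdot,y_i)-\dX(x,y_i)$, $i=1,\dots,d$, with points $y_i$ at distance about $r$ from $x$. Each $f_i$ is $1$-Lipschitz, so $(f_1,\dots,f_d)$ is $1$-Lipschitz into $\mathsf{p}_\infty$. We need a set $A$ (a small ball, say) such that, for every curve $\gamma$ and $\mathcal{H}^1$-a.e. parameter $t$ with $\gamma(t)\in A$, the metric speed satisfies
\begin{equation*}
    |\dot\gamma_t|(1-\varepsilon)\le \sum_i |(f_i\circ\gamma)'(t)|.
\end{equation*}
Summing one-dimensional contributions gives exactly the $\mathsf{p}_1$ lower bound. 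The role of the Hausdorff dimension bound is the following. On $\mathcal H^d$-$\sigma$-finite sets one can decompose $\X$, up to a set $N$ which is purely ``non-rectifiable in $d$ directions'', into countably many Borel pieces $A_j$. On each $A_j$, distance functions from $d$ chosen points detect the length of curve pieces up to the factor $1-\varepsilon$. A Kirchheim-type metric differential argument along curves, combined with covering and the fact that the set of admissible directions for curves through a point has an $\varepsilon$-net of size at most $d$ when the dimension is at most $d$, should produce this. I expect this step to be the main obstacle: selecting only $d$ functions, rather than an arbitrary countable family, so that $\sum_i|(f_i\circ\gamma)'|$ dominates the speed in $\mathsf p_1$. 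If direct selection fails, I would first try a greedy construction: pick $f_1$ maximizing the derivative along ``most'' curve directions, then iterate, using a packing bound derived from $\dim_{\mathcal H}\X\le d$ to show that $d$ steps suffice.

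\textbf{Step 2 (gluing).} Set $\varphi_\varepsilon=(f^j_1,\dots,f^j_d)$ on $A_j\setminus\bigcup_{k<j}A_k$. This is Borel and $1$-Lipschitz on each piece, hence $1$-Luzin--Lipschitz. Because the jumps between pieces are excluded from the extended $\Len_{\mathsf p_1}$, we get $\Len_{\mathsf p_1}(\varphi_\varepsilon\circ\gamma)\ge\sum_j\int_{\gamma^{-1}(A_j')}\sum_i|(f_i^j\circ\gamma)'|\,dt$. Here one needs that the extended length only sees the density points of each level set of the piece index, which I would verify from the definition in \cref{ss:MgCf}.

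\textbf{Step 3 (negligible sets).} It remains to show that the exceptional set $N$, together with the finitely many bad parameter sets, is harmless for $\Mod^{\m}_\infty$-a.e. curve. The tool is the standard characterization: if $\m(N)=0$, then $\Mod_\infty^{\m}$-a.e. curve spends zero $\mathcal{H}^1$-time in $N$, via the test function $+\infty\cdot\1_N$. For this, the decomposition in Step 1 should be arranged $\m$-a.e. rather than $\mathcal H^d$-a.e. I would therefore refine it using the measure $\m$ through Lebesgue differentiation along curves. Alternatively, I would define the residual set so that curves spending positive length in it already violate the speed bound only on an $\m$-null set, which again forces $\Mod_\infty^{\m}$-negligibility.
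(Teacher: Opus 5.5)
There is a genuine gap. It sits in Step 1, which carries essentially the whole content of the theorem.

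\textbf{Step 1 cannot be obtained from the dimension hypothesis.} The hypothesis $\dim_{\mathrm{H}}(\X)\le d$ gives no pointwise information about curves through a point. There is no known fact that ``the admissible directions through a point have an $\varepsilon$-net of size at most $d$''. Nets are also the wrong object: an $\varepsilon$-net of the unit sphere of a $d$-dimensional normed space has size of order $\varepsilon^{1-d}$, whereas what you need is a basis. Further problems:
\begin{itemize}
\item Your Step 1 aims at a bound for \emph{every} curve. A curve-by-curve statement of this kind is not available and is not what the dimension hypothesis yields.
\item $\dim_{\mathrm{H}}\le d$ does not make $\mathcal{H}^d$ $\sigma$-finite, so the decomposition you start from is not available under the hypothesis.
\item The only known bridge from Hausdorff dimension to ``at most $d$ independent directions'' is the Bate--Kangasniemi--Orponen / Bate--Eriksson-Bique--Soultanis theory, and it is intrinsically measure-dependent. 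For the given $\m$ it yields $\dim_{\mathcal{X}_{\m}}(\X)\le d$ for the module of Weaver derivations (\cref{theo:FromHausToWeav}). That is a finite-dimensional normed structure defined only $\m$-a.e.\ and seen only by $\Mod^{\m}_\infty$-a.e.\ fragment.
\item For the same reason Step 3 cannot be repaired afterwards. Your exceptional set $N$ is defined without reference to $\m$, so nothing forces $\m(N)=0$, and ``Lebesgue differentiation along curves'' does not supply it.
\end{itemize}

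\textbf{The sharp constant needs linear structure, not a greedy choice.} Even granting a $d$-dimensional infinitesimal structure, $1-\varepsilon$ does not come from a greedy selection of functions. It comes from linear algebra. If $u_1,\dots,u_d$ are unit vectors and $\lambda^1,\dots,\lambda^d$ are unit dual functionals with $\lambda^j(u_i)=\updelta(i,j)$ (an Auerbach basis), then
\[
\max_i|\lambda^i(v)|\le\|v\|\le\sum_i|\lambda^i(v)|.
\]
Merely spanning unit functionals, which is all a greedy or net-based selection guarantees, lose this. In Euclidean $\R^2$, take $e_1^*$ and $(e_1^*+e_2^*)/\sqrt2$: at $v=(1,-1)/\sqrt2$ the sum is $1/\sqrt2$.

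\textbf{What the paper does instead.} It uses three ingredients, none of which appears in your proposal; picking $d$ distance functions per piece does not substitute for them.
\begin{itemize}
\item[(a)] It constructs Auerbach bases measurably in the module (\cref{theo:ExistAuerBasis}).
\item[(b)] By a module Hahn--Banach argument, it shows that glued differentials $\D_{\m}\bm f$, with $\bm f$ $1$-Lipschitz into $\R^d_{\mathsf{p}_\infty}$, are dense in the $\mathsf{p}_\infty$-unit ball of $\Mat^d(\mathcal{X}_{\m}^{\star})$. Hence the Auerbach dual system can be approximated by Lipschitz data without losing the constant (\cref{lem:DensOfPurElem}).
\item[(c)] It converts the resulting algebraic lower bound into the curvewise bound for $\Mod^{\m}_\infty$-a.e.\ fragment. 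This uses fragment plans, Rainwater's lemma, and derivations built from plans (\cref{lem:ModToPlan}, \cref{lem:GeoDisAnDis}).
\end{itemize}

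\textbf{What is fine.} Your Step 2 (gluing, and comparing multiplicities for the extended length) matches \cref{lem:FromGDisToLen}. Your observation that curves spending positive length in an $\m$-null set are $\Mod^{\m}_\infty$-negligible is \cref{prop:SubsZerMod}. Both become usable only once the pieces are produced $\m$-a.e.\ by an argument like the one above.
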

\noindent In the formulation above, note the different norms for the target space and for the length functional; such a choice is made intentionally to distinguish the nature of the bounds from below and from above on the lengths in our exposition. Additionally, in \eqref{eq:MainRes}, violating the form suggested in \eqref{eq:GenBLD}, we omitted the latter one, which would be with the constant $d$ on the right, as it can be shown (via all the involved definitions) to be a consequence of the $1$-Luzin--Lipschitz property of the map. 

Let us provide some remarks concerning the above theorem. The first point, which is of great importance in our opinion, is that, while the statement highly relies on the presence of a reference measure, the condition on the source space itself is purely metric, i.e., it does not involve any assumptions on the measure (aside from some mild ones). And another point is that the statement is sharp from the position of constants: a careful check of the case where the space $\R^d_{\mathsf{p}_1}$ is taken as $\X$ and the $d$-dimensional Lebesgue measure is taken as $\m$ should demonstrate that, once the expression on the left in \eqref{eq:MainRes} is multiplied by a constant greater than $1$, for some $\varepsilon\in (0,1)$, there will not exist a corresponding map. All this makes the result quite remarkable in its essence.

The conclusion of \cref{theo:MainRes} can be understood to ignore any possible ``intrinsic geometry'' of the source space; in particular, if this space admitted ``infinitesimal tangent norms'' at its points (in some adequate sense), the ``shapes'' of those (as indicated previously) would be therefore completely irrelevant. As is not difficult to track based on properties of linear maps between finite-dimensional normed spaces, this feature is achieved exactly via the choice of $\mathsf{p}_1$ and $\mathsf{p}_{\infty}$ in their positions. An emerging question then is whether we can replace both of these norms with the Euclidean one by requiring some extra assumptions from the underlying space; for this goal, it would be reasonable (from what we know in the smooth realm) to seek a condition imposing a sort of ``Euclideanity'' thereon. And, in fact, all this is perfectly feasible through the (slightly nonstandard) notion of infinitesimal Euclideanity; see \cref{def:InfEucl} for its precise meaning. It forms a very close analog (better adapted to our exposition) of the more common concept of infinitesimal Hilbertianity, for which we refer the reader to \cite[Section 4.3]{GP20}. As expected from the naming, the presence of this property forces those ``infinitesimal tangent norms'' (which, as elaborated later, indeed make sense in our framework) to be Euclidean, which is known to be a necessary attribute to distinguish ``Riemannian'' spaces within ``Finsler'' ones and hence seems to be precisely what we want.

Correspondingly, the second main result of the manuscript is given below. As discussed already, it constitutes essentially a nonsmooth version of the Gromov--Petrunin results in the ``Riemannian'' form, i.e., with the presence of Euclideanity in the picture.
\begin{theo}\label{theo:MainResEucl}
       Suppose the Hausdorff dimension of $\X$ is not greater than $d\in \mathbb{N}_0$. Suppose the space $(\X,\m)$ is infinitesimally Euclidean. Then, for any $\varepsilon\in (0,1)$, there exists a $1$-Luzin--Lipschitz, Borel map $\varphi_{\varepsilon}\colon \X\to \mathbb{R}^d_{\mathsf{p}_2}$ with the following property:
     \begin{equation}\label{eq:MainResEucl}
       (1-\varepsilon)\Len_{\X}(\gamma)\leq \Len_{\mathsf{p}_2}(\varphi_{\varepsilon}\circ \gamma)\qquad \text{for $\Mod_{\infty}^{\m}$-a.e. $\gamma\in \Cur(\X)$}.
    \end{equation}
\end{theo}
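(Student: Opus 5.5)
The plan is to follow the scheme of \cref{theo:MainRes}. The only change is to replace the Finsler-to-$\ell_\infty/\ell_1$ step by a Euclidean step, and infinitesimal Euclideanity is exactly what makes that possible.

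First, I would use the finite Hausdorff dimension to decompose $\X$, up to an $\m$-negligible set, into countably many Borel pieces $K_i$. On each piece I would look for a $1$-Lipschitz map $\psi_i\colon K_i\to\R^{d}_{\mathsf{p}_2}$ that is almost length-preserving along curves. For such a map, I want $\Len_{\mathsf{p}_2}(\psi_i\circ\gamma|_{\gamma^{-1}(K_i)})\ge(1-\varepsilon)\Len_\X(\gamma|_{\gamma^{-1}(K_i)})$ for $\Mod^\m_\infty$-a.e.\ $\gamma$, with the length functional understood in the discontinuity-excluding sense of \cref{ss:MgCf}. I would build $\psi_i$ from $d$ distance-type $1$-Lipschitz functions $f_1,\dots,f_d$, chosen via a Borel selection so that their infinitesimal differentials form an almost-orthonormal frame of the tangent norm at points of $K_i$. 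The bound $\le d$ on the number of functions needed should come from the Hausdorff-dimension hypothesis, by the same argument used for \cref{theo:MainRes}, which controls the dimension of the ``cotangent'' module generated along $\Mod_\infty$-a.e.\ curve.

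Second, the Euclidean-specific step. By \cref{def:InfEucl}, the infinitesimal tangent norm at $\m$-a.e.\ point, as seen along $\Mod_\infty$-a.e.\ curve through it, is induced by a scalar product on a space of dimension $k\le d$. On a piece $K_i$ where this scalar product is nearly constant in a chosen frame (obtained by a further countable partition, Luzin-type), I would apply a linear map $A_i$ that sends the frame to a nearly orthonormal basis of $\R^d_{\mathsf{p}_2}$. The linear algebra is the key point. In the Euclidean case an isometry $(\R^k,\langle\cdot,\cdot\rangle)\to\R^d_{\mathsf{p}_2}$ exists, so for the metric derivative we get $|(\psi_i\circ\gamma)'|_{\mathsf{p}_2}\ge(1-\varepsilon)|\dot\gamma|$ a.e., and also $\le|\dot\gamma|$. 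The upper bound gives the $1$-Lipschitz property of $\psi_i$, after rescaling by $(1-\varepsilon)^{-1}$ factors absorbed into $\varepsilon$ (shrinking pieces if necessary). The lower bound integrates to the desired length estimate on $\gamma^{-1}(K_i)$.

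Third, I would glue the pieces. Define $\varphi_\varepsilon:=\psi_i$ on $K_i$ and $0$ on the null remainder; this is Borel and $1$-Luzin--Lipschitz by construction. For $\Mod^\m_\infty$-a.e.\ $\gamma$, the set $\gamma^{-1}(N)$ has zero $\mathcal L^1$-measure for every $\m$-null $N$, since curves charging null sets form an $\infty$-negligible family. The extended length functional ignores jumps and adds up over the measurable pieces $\gamma^{-1}(K_i)$. Summing the piecewise estimates then yields \eqref{eq:MainResEucl}.

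The main obstacle is the second step: making the pointwise ``tangent scalar product'' coming from \cref{def:InfEucl} uniform on measurable pieces, and transferring the pointwise derivative bound to a bound on lengths valid for $\Mod_\infty$-a.e.\ curve simultaneously. I would first try to reuse, verbatim, the curvewise differentiation lemma underlying \cref{theo:MainRes}, substituting the Euclidean orthonormalization for the John-ellipsoid-free $\ell_1/\ell_\infty$ comparison.
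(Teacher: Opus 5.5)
Your skeleton matches the paper's: a dimension bound from the Bate--Eriksson-Bique--Soultanis theory, an orthonormal coframe from infinitesimal Euclideanity, and gluing with the jump-blind length. The step you use to obtain the Lipschitz property, however, fails. In your second step you infer that $\psi_i$ is $1$-Lipschitz on $K_i$ from the curvewise bound $|(\psi_i\circ\gamma)'|_{\mathsf{p}_2}\le|\dot\gamma|$. Being $1$-Luzin--Lipschitz is a two-point condition: it compares $\mathsf{p}_2(\psi_i(x)-\psi_i(y))$ with $\dX(x,y)$ for all $x,y\in K_i$. A bound along $\Mod^{\m}_{\infty}$-a.e.\ curve gives no control of this, however finely you partition, because nothing forces differentiability in directions of $\X$ that $\m$ does not see.

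Here is a concrete example. Take $\X=\R^2$ and $\m=\mathcal{L}^1\otimes\mu$, with $\mu$ the Cantor measure on the middle-thirds set $C$. For a Lipschitz fragment in $\R\times C$, the second coordinate of $\gamma$ takes values in the $\mathcal{L}^1$-null set $C$, so its derivative vanishes a.e. Using \cref{prop:SubsZerMod} for the part outside $\R\times C$, the map $\psi(x,y)\coloneqq(x,2y)$ therefore has $\mathsf{p}_2$-speed exactly $\ms_{\gamma}$ along $\Mod^{\m}_{\infty}$-a.e.\ fragment. Yet, by Fubini, every $K$ with $\m(K)>0$ has a vertical slice containing two distinct points of $C$, and on these $\psi$ stretches distances by $2$. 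So $\psi$ is not $1$-Luzin--Lipschitz. Your construction has no mechanism excluding such invisible Lipschitz content. Distance-type functions carry it generically, and post-composing individually $1$-Lipschitz $f_1,\dots,f_d$ with a linear $A_i$ only yields a $\sqrt{d}\,\|A_i\|$-Lipschitz map into $\R^d_{\mathsf{p}_2}$.

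The missing idea is never to extract Lipschitz bounds from infinitesimal data. The paper only glues restrictions of maps $\bm{f}\in\LIP_1(\X,\R^d_{\mathsf{p}_2})$, which are jointly $1$-Lipschitz into $\ell_2^d$ on all of $\X$, so the Luzin--Lipschitz property is automatic (\cref{lem:FromGDisToLen}). The real work is showing that such maps nearly realize the optimal lower distortion. This needs two ingredients, which your sketch defers to an unspecified ``argument for \cref{theo:MainRes}'' and a ``curvewise differentiation lemma''.

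The first ingredient is \cref{lem:DensOfPurElem} with $q=\mathsf{p}_2$. It says that finite gluings of $\D_{\m}\bm{f}$, $\bm{f}\in\LIP_1(\X,\R^d_{\mathsf{p}_2})$, are dense in the $\mathsf{p}_2$-unit ball of $\Mat^d(\mathcal{X}_{\m}^{\star})$, over sets where $\mathcal{X}_{\m}$ has a basis. The proof uses a module-valued hyperplane separation theorem (\cref{lem:SepMod}), together with the fact that each family $\{\bm{\lambda}\cdot\D_{\m}\bm{f}\}$ with $\mathsf{p}_2(\bm{\lambda})=1$ contains $\D_{\m}(\LIP_1(\X))$ and is therefore norming. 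Combined with \cref{prop:LSCDisFF}, this transfers the bound $\ADis_{\mathsf{p}_2}\ge 1$ of the orthonormal dual basis in \cref{theo:MainEstADis} to differentials of jointly $1$-Lipschitz maps. Componentwise choices of distance functions cannot achieve this.

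The second ingredient is \cref{lem:GeoDisAnDis}, the inequality $\GDis_p[\bm{f}]\ge\ADis_p[\D_{\m}\bm{f}]$. It is proved by contradiction via \cref{lem:RainWater} and derivations built from fragment plans. Your reading of \cref{def:InfEucl} as a scalar product ``seen along $\Mod^{\m}_{\infty}$-a.e.\ curve'' presupposes exactly this link.

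A minor point: the claim that, for a.e.\ $\gamma$, $\gamma^{-1}(N)$ is negligible for \emph{every} $\m$-null $N$ has the quantifiers reversed (take $N=\im(\gamma)$). For the single null remainder you actually need, \cref{prop:SubsZerMod} suffices.
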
 
\noindent Again, as with \eqref{eq:MainRes}, we did not put the bound from above in \eqref{eq:MainResEucl}, which would be with the constant $1$ on the right, as it follows from the fact that the map is $1$-Luzin--Lipschitz here.

We wish to make the following collateral remark. A straightforward difference of \cref{theo:MainRes} from \cref{theo:MainResEucl} is that the assumption on the source space in the latter case is no longer purely metric, as the presence of the infinitesimal Euclideanity may easily depend on the chosen measure. Still, if we look instead at the above-mentioned notion of infinitesimal Hilbertianity, notably there exists one known class of metric spaces any reasonable measure on which makes them possess the corresponding property, namely $\mathrm{CAT}$-spaces; see \cite[Theorem 1.1]{DMGPS21} for this result. While (as we again emphasize for the reader) the two discussed notions are different, they represent the same basic idea, so we believe that an analogous statement on $\mathrm{CAT}$-spaces could be true for the one we adopt as well, which would return us to the nice situation with only metric requirements. We, however, leave this question outside the scope of our work.

Let us give several comments on our proof strategy for the listed theorems. As exactly equi-Luzin--Lipschitz maps (the definition of which is ``local'' in nature) are taken there, both statements reduce basically (after a bit of work) to the possibility of finding a countable cover of the source space by Borel sets (up to a set of zero measure) each of which admits a Lipschitz map from it to the considered finite-dimensional normed space that ``effectively'' tracks the speed of almost every curve passing through this set, with all of this appropriately quantified in a certain uniform way. With this being taken into account, we receive at our disposal the fruitful machinery (developed by D. Bate, S. Eriksson-Bique, and E. Soultanis in \cite{BEBS24}) of so-called fragment-wise differentiable structures; see also \cite{EBS24} for the earlier work by the latter two authors on tightly connected curvewise differentiable structures. One of the key theorems there provides the existence for a given metric measure space, under the only assumption of finite Hausdorff dimension of the underlying metric space, of such a structure, which consists of a countable cover of the space (up to a set of zero measure) by special finite-dimensional charts, which are, loosely speaking, of a form close to the above-described one but without any uniformity in the corresponding estimates (see \cite[Theorem 5.5]{BEBS24}). Thus, the essence of what we are to carry out in the current work is to construct such charts with the desired uniform control. For this we do the following: first, we perform a preparatory procedure of a similar flavor in the abstract setting of finite-dimensional normed modules (as such ones emerge naturally in the context under discussion) which lies in finding of a countable cover of the space by measurable sets (up to a set of zero measure) admitting a certain ``optimal'' (from the perspective of how ``independent'' it is) module basis, which we call an Auerbach basis by analogy with the world of normed vector spaces, within which such bases serve precisely this role; and second, using certain approximation arguments within a suitably chosen normed module, specifically the module of Weaver derivations on the considered space, we gain charts with the targeted properties. In the end, what we rely on is that, under the mentioned dimensional bound, the proposed module becomes finite-dimensional, the fact of which itself is a highly important consequence of the presence of a fragment-wise differentiable structure (see \cite[Theorem 1.4]{BEBS24}). In other words, while the hidden hardest part of our exposition is established in the earlier literature (mostly within the work by Bate--Eriksson-Bique--Soultanis), many things need to be verified in order to draw the relevant conclusion, exactly which constitutes the core of our manuscript.

\section{Preliminaries}\label{ss:Prelim}

This section is aimed at presenting some preliminary information that is necessary for the subsequent discussion.

\subsection{General notation}\label{ss:GenNot} We start by introducing various notation that we will exploit throughout the manuscript.

We will adopt the following usual ``indeterminacy'' conventions: $(+\infty) \cdot 0=0\cdot(+\infty)=\frac{0}{0}=0$.

The symbols $\mathbb{N}$, $\Nat_0$, $\mathbb{Q}$, $\mathbb{R}$ will denote, respectively, the positive integers, the non-negative integers, the rationals, the reals. In turn, the symbols $\overline{\R}$, $\overline{\R}_{\geq 0}$, $\R_{\geq 0}$, $\R_{>0}$ will mean, respectively, the sets $[-\infty,+\infty]$, $[0,+\infty]$, $[0,+\infty)$, $(0,+\infty)$. Also, given $d,d'\in \mathbb{N}_0$, we put $\overline{d,d'}\coloneqq [d,d']\cap \mathbb{N}_0$, with which we get $\overline{d,d'}=\varnothing$ whenever $d'<d$. Moreover, given $i,j\in \Nat_0$, we put
\begin{equation}
    \updelta(i,j)\coloneqq \begin{dcases}
        1,\qquad i=j,\\
        0,\qquad \text{otherwise}.
    \end{dcases}
\end{equation}

Given sets $S_0,S,S'$ with $S\subseteq S_0$ and a map $\theta\colon S_0\to S'$, by $\theta|_{S}$, we will mean the restriction of $\theta$ to $S$.

Throughout the manuscript, all appearing vector spaces, unless otherwise explicitly stated, will be assumed to be over the reals. Also, given a vector space $\mathbb{W}$, the symbol $\o_{\mathbb{W}}$ will always denote the corresponding zero vector.

In the paper we will constantly deal with finite systems of elements of a given vector space. It will be convenient to treat them equally, regardless of their size, so we will extend them to infinite sequences by adding the zero vectors. To this end, we introduce the following notation. Let $\mathbb{W}$ be a vector space. By $\Mat^{\infty}(\mathbb{W})$, we will mean the collection of all $\mathbb{W}$-valued sequences, i.e., maps $\mathbb{N}\to \mathbb{W}$, that are eventually equal to $\o_{\mathbb{W}}$; such sequences will be written interchangeably (depending on the context) either just as $w$ or (to highlight the presence of multiple components) in any of the forms $\bm{w}=(w_i)_{i\in \mathbb{N}}$ and $\bm{w}=(w^j)_{j\in \mathbb{N}}$. It is then clear that the collection $\Mat^{\infty}(\mathbb{W})$ forms a vector space under the componentwise operations; the corresponding zero vector will be denoted by $\bm{\o}_{\mathbb{W}}$. Also, given $d\in \mathbb{N}_0$, by $\Mat^d(\mathbb{W})$, we will denote the subcollection of $\Mat^{\infty}(\mathbb{W})$ made of all systems at most $d$ of first elements of which are not equal to $\o_{\mathbb{W}}$; these can be seen to form vector subspaces of $\Mat^{\infty}(\mathbb{W})$. Moreover, given $W\subseteq \mathbb{W}$, by $\Mat^{\infty}(W)$ and $\Mat^d(W)$, $d\in \Nat_0$, we will denote the subcollections of, respectively, $\Mat^{\infty}(\mathbb{W})$ and $\Mat^d(\mathbb{W})$, $d\in \Nat_0$, made of all systems all nonzero elements of which belong to $W$. The following convention will be used as well: if we are given a set $S$ and a map $\theta\colon \mathbb{W}\to S$, then, given $\bm{w}\in \Mat^{\infty}(\mathbb{W})$, by $\theta(\bm{w})$, we will mean the system, i.e., a map $\Nat\to S$, obtained by applying $\theta$ to the components of $\bm{w}$. At the same time, such a notation should not be confused with the situation when a map defined on exactly systems of vectors is under consideration, which would have basically the same appearance; it should be clear from the context what is used at the current point. In addition, at a few moments later, we will need to work also with systems of systems of vectors, which are essentially matrices of vectors, so, for that, given $d\in \mathbb{N}_0$, we put $\Mat^{d,d}(\mathbb{W})\coloneqq \Mat^d\big(\Mat^d(\mathbb{W})\big)$; elements of these collections will be denoted either as $\bm{w}_{\bullet}=(\bm{w}_i)_{i\in \mathbb{N}}$ or as $\bm{w}^{\bullet}=(\bm{w}^j)_{j\in \mathbb{N}}$, again depending on the context.

Let $\mathbb{W}$, $\mathbb{W}'$, $\mathbb{W}''$ be vector spaces for which there is a given bilinear map
\begin{equation}
    (\cdot,\cdot)\colon \mathbb{W}\times \mathbb{W}'\to \mathbb{W}''.
\end{equation}
Given $\bm{w}\in \Mat^{\infty}(\mathbb{W})$ and $\bm{w}'\in \Mat^{\infty}(\mathbb{W}')$, we will use the notation
\begin{equation}
    \bm{w}\cdot \bm{w}'\coloneqq \sum\limits_{i\in \mathbb{N}}(w_i,w_i').
\end{equation}
In other words, whenever we have a bilinear map on the product of two vector spaces, we transfer it to the bilinear map defined on pairs of systems in the corresponding vector spaces. Note that the chosen notation, with $\cdot$, will be used regardless of the original notation for the map.

For brevity, we put $\R^{\infty}\coloneqq \Mat^{\infty}(\mathbb{R})$ and, for $d\in \mathbb{N}_0$, put $\R^d\coloneqq \Mat^d(\mathbb{R})$ and $\mathbb{Q}^d\coloneqq \Mat^d(\mathbb{Q})$. The latter collections will be occasionally identified with the standard coordinate spaces in the obvious way, which also puts thereon the corresponding topologies.

Let $d\in \mathbb{N}_0$. By $\GL_d$, we will denote the collection of all systems $\bm{u}\in \Mat^d(\mathbb{R}^d)$ the first $d$ elements of which form a basis of $\mathbb{R}^d$; such systems will be called $d$-bases, rather than merely bases, of $\mathbb{R}^d$ in order to indicate the relevant length of those (and a similar convention will be used throughout the paper). As the notation suggests, this object can be identified (which will be implicitly done everywhere further) with the general linear group of degree $d$ and hence possesses a natural structure of a smooth manifold, which are known to be locally compact, Polish spaces. We note that, with this choice of the topology on $\GL_d$, a sequence $(\bm{u}^n)_{n\in \mathbb{N}}\subseteq \GL_d$ converges to $\bm{u}\in \GL_d$ in $\GL_d$ if and only if, for each $i\in \overline{1,d}$, the sequence $(u^n_i)_{n\in \mathbb{N}}$ converges to $u_i$ in $\R^d$. Also, given $\bm{u}\in \GL_d$, by $\Dual_d(\bm{u})$, we will denote the unique system $\bm{\psi}\in \GL_d$ such that
\begin{equation}
    \psi^j\cdot u_i=\updelta(i,j)\qquad \text{for all $i,j\in \overline{1,d}$};
\end{equation}
in this case, the system $(\bm{u},\bm{\psi})$ will be called $d$-biorthogonal.

By $\mathsf{P}^{\infty}$, we will mean the collection of all seminorms on $\mathbb{R}^{\infty}$. Within it, as distinguished elements, we will deal with the $\ell_1$-norm $\mathsf{p}_1$, with the $\ell_2$-norm $\mathsf{p}_2$, and with the $\ell_{\infty}$-norm $\mathsf{p}_{\infty}$, defined in the usual way. Let $d\in \Nat_0$. By $\mathsf{P}^d$, we will denote the collection of all seminorms on $\mathbb{R}^d$, which can be naturally considered as a subcollection of $\mathsf{P}^{\infty}$. The function
\begin{equation}
    \mathsf{P}^d\times \mathsf{P}^d\to \R_{\geq 0}\colon (p,p')\mapsto \sup\limits_{w\in \mathbb{R}^d, \mathsf{p}_2(w)\leq 1} \big|p'(w)-p(w)\big|
\end{equation}
can be seen then to define a metric on $\mathsf{P}^d$, which, moreover, makes this collection into a separable, complete metric space. Also, by $\mathsf{P}^d_+$, we will denote the subcollection of $\mathsf{P}^d$ made of all norms on $\mathbb{R}^d$, which can be seen to be an open subset of $\mathsf{P}^d$. Then, given $p\in \mathsf{P}^d$, by $\R^d_p$, we will mean the space $\R^d$ equipped with the pseudometric induced by $p$. Finally, given $p\in \mathsf{P}^d$, by $p^*$, we will denote the extended norm on $\mathbb{R}^d$ that is dual to $p$, which notably becomes an element of $\mathsf{P}^d_+$ whenever $p\in \mathsf{P}^d_+$.

Given a set $S$, by $\1_S$, we will mean ``the indicator function'' of $S$, which becomes a real function whenever the ambient set is understood.

Let $S$ be a set. By a measure on $S$, we will always mean a nonnegative outer measure on $S$. Once there is a given topology on $S$, a measure $\sigma$ on $S$ will be called Borel whenever all Borel subsets of $S$ are $\sigma$-measurable.

For the sequel, let $X$ be a set.

Given a measure $\mu$ on $X$, the pair $(X,\mu)$ will be called a measure space; the measure space $(X,\mu)$ will be called sigma-finite if the measure $\mu$ is sigma-finite.

For what follows, let $\mu$ be a measure on $X$.

Let $S\subseteq X$. By $\P_{\mu}(S)$, we will mean the family of all $\mu$-measurable sets in $X$ that are subsets of $S$. By $\P_{\mu}^+(S)$, we will mean the family of all sets in $\P_{\mu}(S)$ of nonzero $\mu$-measure. By a $\mu$-partition of $S$, we will mean a disjoint family $(E_m)_{m\in \mathbb{N}}\subseteq \P_{\mu}(S)$ such that
\begin{equation}
    \mu\bigg(S\Big\backslash \bigcup\limits_{m\in \mathbb{N}} E_m\bigg)=0.
\end{equation}

By $\overline{\Leb}_{\mu}$, we will mean the collection of all equivalence classes up to the $\mu$-a.e. equality of $\mu$-measurable functions $X\to \overline{\R}$; moreover, all $\mu$-measurable functions $X\to \overline{\R}$ will be naturally considered as elements of this collection through passing to the corresponding equivalence class. In the usual way, we will treat elements of $\overline{\Leb}_{\mu}$ as $\mu$-a.e. defined $\overline{\R}$-valued functions on $X$. And, given $\varrho\in \overline{\Leb}_{\mu}$, a function $\rho\colon X\to \overline{\R}$ will be called a $\mu$-representative of $\varrho$ if it is equal $\mu$-a.e. on $X$ to $\varrho$. Also, by $\Leb_{\mu}$ and $\Leb_{\mu}^{\infty}$, we will denote the subcollections of $\overline{\Leb}_{\mu}$ made, respectively, of all $\mu$-a.e. finite functions and of all $\mu$-essentially bounded functions, both of which form $\R$-algebras in the obvious way. The function in $\Leb^{\infty}_{\mu}$ that equals zero $\mu$-a.e. on $X$ will be denoted by $\o_{\Leb_{\mu}}$, in agreement with the notation for the zero vector in $\Leb_{\mu}$. Next, by $[\overline{\Leb}_{\mu}]_{\geq 0}$ and $[\Leb_{\mu}]_{\geq 0}$, we will denote the subcollections of, respectively, $\overline{\Leb}_{\mu}$ and $\Leb_{\mu}$ made of all $\mu$-a.e. nonnegative functions. Finally, given $\varrho\in \overline{\Leb}_{\mu}$, by $|\varrho|_{\Leb_{\mu}}$, we will denote the function in $[\overline{\Leb}_{\mu}]_{\geq 0}$ obtained by taking $\mu$-a.e. on $X$ the absolute value of $\varrho$, and, by $\|\varrho\|_{\Leb^{\infty}_{\mu}}$, we will denote the $\mu$-essential supremum of $|\varrho|_{\Leb_{\mu}}$ as valued in $\overline{\R}_{\geq 0}$.

Given $\bm{\varrho}\in \Mat^{\infty}(\Leb_{\mu})$ and $p\in \mathsf{P}^{\infty}$, by $p(\bm{\varrho})$, we will mean the function in $\Leb_{\mu}$ obtained by taking $\mu$-a.e. on $X$ the $p$-seminorm of $\bm{\varrho}$.

Given $\varrho_1,\varrho_2\in \overline{\Leb}_{\mu}$, by $\varrho_1\vee\varrho_2$ and $\varrho_1\wedge \varrho_2$, we will mean the functions in $[\overline{\Leb}_{\mu}]_{\geq 0}$ obtained by taking $\mu$-a.e. on $X$, respectively, the pointwise maximum and minimum of $\varrho_1$ and $\varrho_2$; these operations will be also called, respectively, join and meet.

We will need also the standard concepts of join and meet of an arbitrary family of measurable functions. For that, we formulate the separate statement below, only for the former case, as the latter one is completely analogous.
\begin{prop}
Suppose the measure space $(X,\mu)$ is sigma-finite. Let $\Pi$ be an index set, let $(\varrho_{\pi})_{\pi\in \Pi}\subseteq \overline{\Leb}_{\mu}$. Then there exists a unique function $\varrho\in \overline{\Leb}_{\mu}$, denoted by
\begin{equation}
    \bigvee\limits_{\pi\in \Pi} \varrho_{\pi},
\end{equation}
satisfying the following properties:
\begin{itemize}
    \item[$\rm I)$] it holds, for any $\pi\in \Pi$, that
    \begin{equation}
        \varrho_{\pi}\leq \varrho\qquad \text{$\mu$-a.e. on $X$};
    \end{equation}
    \item[$\rm II)$] given any $\widetilde{\varrho}\in \overline{\Leb}_{\mu}$ such that, for any $\pi\in \Pi$, one has
    \begin{equation}
        \varrho_{\pi}\leq \widetilde{\varrho}\qquad \text{$\mu$-a.e. on $X$},
    \end{equation}
it holds that
    \begin{equation}
        \varrho\leq \widetilde{\varrho}\qquad \text{$\mu$-a.e. on $X$}.
    \end{equation}
\end{itemize}
Moreover, there is a family $(\pi_l)_{l\in \mathbb{N}}\subseteq \Pi$ such that
\begin{equation}
    \varrho(x)=\sup\limits_{l\in \mathbb{N}} \varrho_{\pi_l}(x)\qquad \text{for $\mu$-a.e. $x\in X$}.
\end{equation}
\end{prop}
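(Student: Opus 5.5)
Uniqueness is immediate from properties I) and II). If $\varrho$ and $\varrho'$ both satisfy them, then II) applied to each gives $\varrho\le\varrho'$ and $\varrho'\le\varrho$ $\mu$-a.e. on $X$, so they coincide in $\overline{\Leb}_{\mu}$. Existence is the essential supremum construction, and it yields the countable subfamily along the way.

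First reduce to a finite measure with bounded functions. By sigma-finiteness, pick a $\mu$-partition $(E_m)_{m\in\Nat}$ of $X$ with $\mu(E_m)<+\infty$. Set $w\coloneqq\sum_m 2^{-m}(1+\mu(E_m))^{-1}\1_{E_m}$; this is strictly positive $\mu$-a.e., and the measure $\nu\coloneqq w\mu$ is finite and has the same null sets as $\mu$. Fix an increasing homeomorphism $h\colon\overline{\R}\to[-1,1]$, for instance an extension of $\tfrac{2}{\pi}\arctan$. Applying $h$ preserves a.e. inequalities and pointwise suprema in both directions, so it suffices to treat functions with values in $[-1,1]$ and integrals against $\nu$.

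Next, for each countable $C\subseteq\Pi$ put $\varrho_C\coloneqq\sup_{\pi\in C}h\circ\varrho_\pi$, defined pointwise a.e. Since $C$ is countable, $\varrho_C$ is $\mu$-measurable and well defined up to null sets. Let
\[
s\coloneqq\sup\Big\{\textstyle\int\varrho_C\,\D\nu : C\subseteq\Pi \text{ countable}\Big\}\le\nu(X)<+\infty .
\]
Choose countable sets $C_k$ with $\int\varrho_{C_k}\,\D\nu\to s$, and let $C\coloneqq\bigcup_k C_k$, enumerated as $(\pi_l)_{l\in\Nat}$ (repetitions allowed; if $\Pi=\varnothing$ one takes the constant $-\infty$, with the ``moreover'' statement holding vacuously or by convention). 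Monotonicity gives $\int\varrho_C\,\D\nu=s$. Define $\varrho\coloneqq h^{-1}\circ\varrho_C=\sup_l\varrho_{\pi_l}$.

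The only step needing an argument is property I). Suppose that for some $\pi\in\Pi$ the set $\{h\circ\varrho_\pi>\varrho_C\}$ had positive measure. Then $C\cup\{\pi\}$ is countable and $\int\varrho_{C\cup\{\pi\}}\,\D\nu>s$, because $\nu$ is equivalent to $\mu$ and the integrands are bounded. This contradicts the definition of $s$, so $\varrho_\pi\le\varrho$ $\mu$-a.e.

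Property II) is then trivial. If $\varrho_\pi\le\widetilde\varrho$ a.e. for every $\pi$, in particular for each $\pi_l$, then, since a countable union of null sets is null, $\varrho=\sup_l\varrho_{\pi_l}\le\widetilde\varrho$ a.e. This also gives the ``moreover'' statement.
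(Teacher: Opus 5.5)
Your argument is correct and complete. It is the classical construction of the essential supremum: you pass to $[-1,1]$-valued functions via an order isomorphism, replace $\mu$ by an equivalent finite measure $\nu$, and maximize $\int\varrho_C\,\D\nu$ over countable $C\subseteq\Pi$. Uniqueness, properties I) and II), and the countable-subfamily clause all follow as you describe.

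The paper gives no proof of this proposition. It records it as a well-known fact to be used without reference, so there is no argument of the paper's to compare yours with.

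Two small edge cases are worth stating explicitly:
\begin{itemize}
\item When $\Pi\neq\varnothing$, adjoin a fixed $\pi_0\in\Pi$ to $C$. This keeps $\int\varrho_C\,\D\nu=s$ and guarantees that the enumeration $(\pi_l)_{l\in\mathbb{N}}$ exists even if every maximizing $C_k$ happens to be empty, which can occur, e.g., when all $\varrho_\pi=-\infty$ a.e.
\item For $\Pi=\varnothing$, your caveat is right. The join is then the constant $-\infty$, but the ``moreover'' clause is literally unsatisfiable, since there is no map $\mathbb{N}\to\varnothing$.
\end{itemize}
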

\noindent In addition, under the same premise as above, by
\begin{equation}
    \bigwedge\limits_{\pi\in \Pi} \varrho_{\pi},
\end{equation}
we will denote the corresponding meet, for which the conclusion is formed analogously by duality. The given statement, as well as its direct consequences, will be used without a specific reference further. Also, to be accurate, if we are given a family of $\mu$-measurable functions $X\to \overline{\R}$, then, by its join and meet, we will mean, respectively, the join and meet of the family of the corresponding equivalence classes up to the $\mu$-a.e. equality of the initial functions.

\subsection{On modules over measure spaces}\label{ss:ModMS} Now we want to give a piece of necessary information about modules over measure spaces.

For the current subsection, we fix a sigma-finite measure space $(\X,\m)$.

    \begin{rem}\label{rem:ModTerm}
       Let us make a comment concerning the terminology used in the context of modules over measure spaces. While the definition we are about to give complies completely with the algebraic notion of module over a ring, in the setting of rings of measurable functions on measure spaces, one typically includes into play extra assumptions (like the locality and gluing conditions, as axiomatized in \cite[Definition 1.2.1]{G18}) to avoid certain pathologies. We, however, will not face any issues in this direction, so we choose a simpler (and more general) definition to work with.
    \end{rem}

Let $\mathcal{A}$ denote here either $\Leb_{\m}$ or $\Leb^{\infty}_{\m}$. By an $\mathcal{A}$-module, we will mean a vector space $\mathcal{M}$ equipped with a bilinear multiplication map
    \begin{equation}
        \mathcal{A}\times \mathcal{M}\to \mathcal{M}\colon (\varrho,w)\mapsto \varrho w
    \end{equation}
    satisfying the following conditions:
    \begin{equation}
        (\varrho' \varrho)w=\varrho'(\varrho w)\qquad \text{for any $\varrho,\varrho'\in \mathcal{A}$ and any $w\in \mathcal{M}$};
    \end{equation}
    \begin{equation}
        \1_{\X}w=w\qquad \text{for any $w\in \mathcal{M}$}.
        \end{equation}
    Of course, such a definition would make perfect sense if one considered instead of $\mathcal{A}$ an arbitrary subalgebra of $\Leb_{\m}$ containing the constant functions. We, however, do not need this level of generality at all.

    We point out that $\Leb_{\m}$-modules can be seen to form a subclass of $\Leb_{\m}^{\infty}$-modules. Thereby, all the further definitions, while given only for the latter ones, will be used for the former ones as well with the obvious meaning. So, for what follows, we fix an $\Leb_{\m}^{\infty}$-module $\mathcal{M}$.

    As usual in the setting of modules over measure spaces, we will adopt the following convention: given $E\in \P_{\m}(\X)$ and $w,w'\in \mathcal{M}$, we write
    \begin{equation}
        w=w'\qquad \text{on $E$}
    \end{equation}
    to mean
    \begin{equation}
        \1_E w=\1_E w'.
    \end{equation}

Let $E\in\P_{\m}(\X)$, $d\in \mathbb{N}_0$, $\bm{w}\in \Mat^{\infty}(\mathcal{M})$. The system $\bm{w}$ will be called $d$-independent over $E$ if, given any $\bm{\varrho}\in \Mat^d(\Leb_{\m}^{\infty})$ with
\begin{equation}
    \bm{\varrho} \cdot \bm{w}=\o_{\mathcal{M}}\qquad \text{on $E$},
\end{equation}
we have
\begin{equation}
    \bm{\varrho}=\bm{\o}_{\Leb_{\m}}\qquad \text{$\m$-a.e. on $E$}.
\end{equation}
The system $\bm{w}$ will be called a $d$-basis of $\mathcal{M}$ over $E$ if it is $d$-independent over $E$ and, for any $w\in \mathcal{M}$, there is a $\m$-partition $(E_m)_{m\in \mathbb{N}}$ of $E$ and there are $\bm{\varrho}_m\in \Mat^d(\Leb_{\m}^{\infty})$, $m\in \mathbb{N}$, such that, for each $m\in \Nat$, we have
\begin{equation}
    w=\bm{\varrho}_m\cdot \bm{w}\qquad \text{on $E_m$}.
\end{equation}
We then make the following simple but important observation that is implied by the given definitions: if the system $\bm{w}$ is $d$-independent over $E$ and there is no $E'\in \P_{\m}^+(E)$ such that the module $\mathcal{M}$ admits a $(d+1)$-independent system over $E'$, then the system $\bm{w}$ forms a $d$-basis of $\mathcal{M}$ over $E$.

Given $E\in \P_{\m}(\X)$, by the dimension of $\mathcal{M}$ over $E$, denoted by $\dim_{\mathcal{M}}(E)$, we will mean the supremum of all $d\in \mathbb{N}_0$ for which there is $E'\in \P_{\m}^+(E)$ such that the module $\mathcal{M}$ admits a $d$-independent system over $E'$. In particular, for $E\in \P_{\m}(\X)$ with $\m(E)=0$, we have $\dim_{\mathcal{M}}(E)=-\infty$.

The following well-known fact about the dimensional decomposition of the space will be convenient to use.
\begin{prop}\label{prop:DimDecompMod}
    Let $E\in \P_{\m}(\X)$, and suppose $\dim_{\mathcal{M}}(E)<+\infty$. Then there exists a $\m$-partition $(E_m)_{m\in \mathbb{N}}$ of $E$ with the following property: for each $m\in \mathbb{N}$ with $\m(E_m)>0$, there is $d_m\in \mathbb{N}_0$ with $d_m\leq \dim_{\mathcal{M}}(E)$ such that the module $\mathcal{M}$ admits a $d_m$-basis over $E_m$.
    \begin{proof}
        The statement can be seen to be a particular case of \cite[Proposition 3.1.1]{K04}. More specifically, using the sigma-finiteness of $\m$, one can recursively exhaust $E$ (unless $\m(E)=0$, in which case everything trivializes) by sets over which the module $\mathcal{M}$ admits a basis (up to a $\m$-negligible set). In similar terms, such a reasoning can be found in \cite[Proposition 1.4.5]{G18}. Thereby, we omit further details.
    \end{proof}
\end{prop}

We will need the following terminology concerning subsets of modules. Let $W\subseteq \mathcal{M}$. The set $W$ will be called convex if it is convex as a subset of the underlying vector space of $\mathcal{M}$, i.e., one has
\begin{equation}
    \eta w+(1-\eta)w'\in W\qquad \text{for any $w,w'\in W$ and any $\eta\in [0,1]$}.
\end{equation}
Likewise, the set $W$ will be called disked if it is disked (or absolutely convex in the more common terminology) as a subset of the underlying vector space of $\mathcal{M}$, i.e., it is convex and it holds that
\begin{equation}
    -w\in W\qquad \text{for any $w\in W$}.
\end{equation}
Moreover, we adopt the namings suggested in \cite[Definition 2.1]{CKV15} and introduce the following definitions: the set $W$ will be called stable if
\begin{equation}
    \1_E w+\1_{\X\backslash E} w'\in W\qquad \text{for any $w,w'\in W$ and any $E\in \P_{\m}(\X)$};
\end{equation}
the set $W$ will be called $\Leb_{\m}$-convex if, given any $\varrho\in \Leb_{\m}$ with
\begin{equation}
    0\leq \varrho\leq 1\qquad \text{$\m$-a.e. on $\X$},
\end{equation}
we have
\begin{equation}
    \varrho w+(\1_{\X}-\varrho)w'\in W\qquad \text{for any $w,w'\in W$}.
\end{equation}

Let us note here that the vector spaces $\Mat^{\infty}(\mathcal{M})$ and $\Mat^d(\mathcal{M})$, $d\in \Nat_0$, can be seen to possess a natural structure of $\Leb^{\infty}_{\m}$-modules, the fact of which we will always keep in mind.

We now move to the notion of normed modules. Let $\mathcal{A}$ denote here either $\Leb_{\m}$ or $\overline{\Leb}_{\m}$. By an $\mathcal{A}$-norm on $\mathcal{M}$, we will mean a function $\Norm\colon \mathcal{M}\to [\mathcal{A}]_{\geq 0}$ satisfying the following conditions:
\begin{gather}
    \begin{gathered}
    \Norm(w_1+w_2)\leq \Norm(w_1)+\Norm(w_2)\qquad \text{$\m$-a.e. on $\X$}\\
    \text{for any $w_1,w_2\in \mathcal{M}$};
    \end{gathered}\\
    \begin{gathered}
    \Norm(\varrho w)=|\varrho|_{\Leb_{\m}} \Norm(w)\qquad \text{$\m$-a.e. on $\X$}\\
    \text{for any $\varrho\in \Leb_{\m}^{\infty}$};
    \end{gathered}\\
    \begin{gathered}
    \Norm(w)=\o_{\Leb_{\m}}\qquad \text{$\m$-a.e. on $\X$}\\
    \Downarrow\\
    w=\o_{\M}\\
    \text{for any $w\in \mathcal{M}$}.
    \end{gathered}
\end{gather}
Correspondingly, given an $\mathcal{A}$-norm $\Norm $ on $\mathcal{M}$, the pair $(\mathcal{M},\Norm)$ will be called an $\mathcal{A}$-normed module.

\begin{rem}
    Again, as already mentioned in \cref{rem:ModTerm}, our terminology for modules, normed modules in particular, differs from the standard one in various aspects. We point the reader's attention to this circumstance.
\end{rem}

Clearly, any $\Leb_{\m}$-normed module is also an $\overline{\Leb}_{\m}$-normed module in the obvious way, so all definitions that make sense for the latter ones do so for the former ones. We, however, will eventually work entirely with $\Leb_{\m}$-normed modules.

For what follows, let $\Norm$ be an $\overline{\Leb}_{\m}$-norm on $\mathcal{M}$.

We will occasionally use the following notation:
\begin{equation}
 [\mathcal{M},\Norm]_{\leq 1}\coloneqq \Big\{w\in \mathcal{M} \bigm| \text{$\big(\Norm(w)\big)(x)\leq 1$ for $\m$-a.e. $x\in \X$} \Big\}.
\end{equation}
Whenever the norm $\Norm$ is tightly connected with $\mathcal{M}$, we will omit its indication and write the introduced term simply as $[\mathcal{M}]_{\leq 1}$.

Let $d\in \Nat_0$, $p\in \mathsf{P}^d$. Slightly abusing notation, given $\bm{w}\in \Mat^d(\mathcal{M})$, we put
\begin{equation}
    p(\bm{w})\coloneqq \bigvee\limits_{\bm{\varrho}\in \R^d, p^*(\bm{\varrho})\leq 1} \Norm(\bm{\varrho}\cdot \bm{w}).
\end{equation}
This assignment, which will be still denoted by $p$ for simplicity, can be easily seen to define on $\Mat^d(\mathcal{M})$ an $\overline{\Leb}_{\m}$-norm, which, whenever the norm $\Norm$ is an $\Leb_{\m}$-norm, turns into an $\Leb_{\m}$-norm.

The normed module $(\mathcal{M},\Norm)$ will be called Euclidean whenever there exists a map
   \begin{equation}
       \langle \cdot,\cdot \rangle_{\Norm}\colon \mathcal{M}\times \mathcal{M} \to \Leb_{\m},
   \end{equation}
   which will be called the $\Leb_{\m}$-inner-product for $\Norm$, with the following properties:
   \begin{gather}
   \begin{gathered}
       \big\langle w, \varrho_1 w_1+\varrho_2 w_2\big\rangle_{\Norm}=\varrho_1\langle w,  w_1\rangle_{\Norm}+\varrho_2\langle w,  w_2\rangle_{\Norm}\qquad \text{$\m$-a.e. on $\X$}\\ \text{for any $\varrho_1,\varrho_2\in \Leb_{\m}^{\infty}$ and any $w,w_1,w_2\in \mathcal{M}$}; \end{gathered}\\
       \begin{gathered}
           \langle w,w'\rangle_{\Norm}=\langle w',w\rangle_{\Norm}\qquad \text{$\m$-a.e. on $\X$}\\
           \text{for any $w,w'\in \mathcal{M}$}; \end{gathered}\\
           \begin{gathered}\label{eq:InProdNorm}
       \langle w,w\rangle_{\Norm}=\big(\Norm(w)\big)^2\qquad \text{$\m$-a.e. on $\X$}\\
       \text{for any $w\in \mathcal{M}$}.
       \end{gathered}
   \end{gather}
   Clearly, such a map is necessarily unique. As a direct consequence of the given definition, a Euclidean module is always an $\Leb_{\m}$-normed module. Also, if the normed module $(\mathcal{M}, \Norm)$ is Euclidean, given $E\in \P_{\m}(\X)$, a system $\bm{w}\in \Mat^{\infty}(\mathcal{M})$ will be called orthogonal over $E$ if, for all $i,i'\in \Nat$ with $i\neq i'$, one has
   \begin{equation}
       \langle w_i,w_{i'}\rangle_{\Norm}=0\qquad \text{$\m$-a.e. on $E$}
   \end{equation}
   and, given $d\in \Nat_0$, will be called $d$-orthonormal over $E$ if it is orthogonal over $E$ and, for any $i\in \overline{1,d}$, one has
   \begin{equation}
       \Norm(w_i)=1\qquad \text{$\m$-a.e. on $E$}.
   \end{equation}

   \begin{rem}
       The notion of Euclidean module should be compared with that of Hilbert module (see, for instance, \cite[Definition 1.2.20]{G18}). Up to the already mentioned caveat about our choice of the terminology for modules, a Hilbert module is nothing but a complete (with respect to the natural metric) Euclidean module.
   \end{rem}

  For the sequel, it is convenient to record the following simple statement.
   \begin{prop}\label{prop:EstNormAbsNorm}
       Let $d\in \Nat_0$, $\bm{w}\in \Mat^d(\mathcal{M})$. Then the assertions below hold.
       \begin{enumerate}
           \item[$\rm I)$] One has
           \begin{equation}
               \mathsf{p}_{\infty}(\bm{w})\leq \mathsf{p}_{\infty}\big(\mathsf{N}(\bm{w})\big)\qquad \text{$\m$-a.e. on $\X$}.
           \end{equation}
           \item[$\rm II)$] Suppose the module $(\mathcal{M},\Norm)$ is Euclidean. Let $E\in \P_{\m}(\X)$, and suppose the system $\bm{w}$ is orthogonal over $E$. Then one has
         \begin{equation}
               \mathsf{p}_2(\bm{w})\leq \mathsf{p}_{\infty}\big(\mathsf{N}(\bm{w})\big)\qquad \text{$\m$-a.e. on $E$}.
           \end{equation}
           \begin{proof}
               The assertions follow almost directly from all the relevant definitions, so we omit a detailed proof.
           \end{proof}
       \end{enumerate}
   \end{prop}

   We now turn to the notion of dual module and related definitions. For that, let $\M$ be an $\Leb_{\m}^{\infty}$-module.

By the dual module of $\M$, we will mean the $\Leb_{\m}$-module, denoted by $\M^{\star}$, obtained as follows. As a set, we declare $\M^{\star}$ to consist of all $\Leb_{\m}^{\infty}$-linear maps from $\M$ to $\Leb_{\m}$, that is, all linear maps $\phi\colon \M\to \Leb_{\m}$ such that
\begin{equation}
    \phi(\xi v)=\xi\phi(v)\qquad \text{for any $\xi\in \Leb_{\m}^{\infty}$ and any $v\in \M$}.
\end{equation}
This set can then be seen to possess a natural structure of an $\Leb_{\m}$-module.

\begin{rem}
   As one more warning for the reader, we note that (far more often) a different definition of the dual module is used (see, for instance, \cite[Definition 1.2.6]{G18}). We, however, prefer the given one for our exposition, particularly since it is purely algebraic, i.e., does not require any fixed norm on the module. Thereby, we introduced the notation for the dual module in a different style, with $\star$ rather than $\ast$, as this should prevent any confusion. Nevertheless, once we restrict ourselves to finite-dimensional modules (which we will do eventually), there will be no conceptual difference between the proposed approaches.
\end{rem}

    Further, given $E\in \P_{\m}(\X)$, we will occasionally consider the subset of $\M^{\star}$ of the form
    \begin{equation}
        \M^{\star}|_E\coloneqq \Big\{\1_E \phi\bigm|\phi\in  \M^{\star}\Big\},
    \end{equation}
    which can be seen to form an $\Leb_{\m}$-module in its own right.

Let $E\in \P_{\m}(\X)$, $d\in \mathbb{N}_0$, $\bm{v}\in \Mat^{\infty}(\M)$, $\bm{\phi}\in \Mat^{\infty}(\M^{\star})$. The system $(\bm{v},\bm{\phi})$ will be called $d$-biorthogonal over $E$ if, for all $i,j\in \overline{1,d}$, we have
\begin{equation}
    \phi^j(v_i)=\updelta(i,j)\qquad \text{$\m$-a.e. on $E$}.
\end{equation}

We record the version for modules of the standard fact that any basis of a finite-dimensional vector space admits a unique dual system.
\begin{prop}\label{prop:ExistOfFunc}
    Let $E\in \mathrm{P}_{\m}(\X)$, $d\in \mathbb{N}_0$. Let $\bm{u}\in \Mat^d(\M)$ be a system forming a $d$-basis of $\M$ over $E$. Then there exists a unique system $\bm{\psi}\in \M^{\star}|_E$ such that the system $(\bm{u},\bm{\psi})$ is $d$-biorthogonal over $E$; the system $\bm{\psi}$, additionally, forms a $d$-basis of $\M^{\star}$ over $E$.
    \begin{proof}
        The proof is essentially the same as in the setting of vector spaces, so we omit the details.
    \end{proof}
\end{prop}
\noindent The system guaranteed by the above statement will be called the $d$-dual system of $\bm{u}$ over $E$ and denoted by $\Dual_d^E(\bm{u})$.

We will tacitly use the following consequence of \cref{prop:ExistOfFunc}. Given $E\in \P_{\m}(\X)$, $d\in \Nat_0$, $\bm{u}\in \Mat^d(\M)$ such that the system $\bm{u}$ forms a $d$-basis of $\M$ over $E$, putting $\bm{\psi}\coloneqq \Dual^E_d(\bm{u})$, we have the following: for any $v\in \M$ there is a $\m$-partition $(E_m)_{m\in \Nat}$ of $E$ such that, for each $m\in \Nat$, one has $\1_{E_m}\bm{\psi}(v)\in \Mat^d(\Leb_{\m}^{\infty})$ and
\begin{equation}
    v=\big(\1_{E_m}\bm{\psi}(v)\big)\cdot \bm{u}\qquad \text{on $E_m$}.
\end{equation}

For what follows, let $|\cdot|_{\M}$ be an $\Leb_{\m}$-norm on $\M$, which provides us with the $\Leb_{\m}$-normed module $\M=\big(\M,|\cdot|_{\M}\big)$. 

We equip $\M^{\star}$ with a natural $\overline{\Leb}_{\m}$-norm as follows. We consider the function $|\cdot|_{\M^{\star}}\colon \M^{\star}\to [\overline{\Leb}_{\m}]_{\geq 0}$, which will be called the dual norm to $|\cdot|_{\M}$, given for any $\phi\in \M^{\star}$ by
\begin{equation}
    |\phi|_{\M^{\star}}\coloneqq \bigvee\limits_{v\in [\M]_{\leq 1}} \big|\phi(v)\big|_{\Leb_{\m}}.
\end{equation}
It can be then easily checked that this function is indeed an $\overline{\Leb}_{\m}$-norm on $\M^{\star}$. Thus, we get at our disposal the $\overline{\Leb}_{\m}$-module $\M^{\star}=\big(\M^{\star},|\cdot|_{\M^{\star}}\big)$. Next, while it is always possible to put on a given normed module a natural topology, we shall use it only for the dual module case. Namely, we endow $\M^{\star}$ with the topology given by the seminorms
\begin{equation}
    \M^{\star}\to \R_{\geq 0}\colon \phi\mapsto \int\limits_{\X} \Big(\1_E\wedge |\phi|_{\M^{\star}}\Big) \D \m,\qquad \text{$E\in \P_{\m}(\X)$, $\m(E)<+\infty$}.
\end{equation}
This topology can be verified, via the sigma-finiteness of $\m$, to make $\M^{\star}$ into a Fr{\'e}chet space. In practice, as one can check, this choice of the topology on $\M^{\star}$ implies that a sequence $(\phi_n)_{n\in \mathbb{N}}\subseteq \M^{\star}$ converges to $\phi\in \M^{\star}$ in $\M^{\star}$ if and only if the sequence $\big(|\phi_n-\phi|_{\M^{\star}}\big)_{n\in \mathbb{N}}$ converges to $\o_{\Leb_{\m}}$ locally in $\m$-measure on $\X$. After that, given $d\in \Nat_0$, we also put on $\Mat^d(\M^{\star})$ the natural product topology, which effectively means that a sequence $(\bm{\phi}_n)_{n\in \mathbb{N}}\subseteq \Mat^d(\M^{\star})$ converges to $\bm{\phi}\in \Mat^d(\M^{\star})$ if and only if, for each $j\in \overline{1,d}$, the sequence $(\phi_n^j)_{n\in \mathbb{N}}$ converges to $\phi^j$ in $\M^{\star}$. Finally, given $E\in \P_{\m}(\X)$, $d\in \Nat_0$, and $\bm{\Phi}\subseteq \Mat^d(\M^{\star})$, by $\Cl_E(\bm{\Phi})$, we will denote the closure in $\Mat^d(\M^{\star})$ of the family $\bm{\Phi}\cap \Mat^d(\M^{\star}|_E)$.

\begin{rem}
    An important consequence of \cref{prop:ExistOfFunc} is that the dual norm for a finite-dimensional module is in fact an $\Leb_{\m}$-norm, which will be always implicitly used.
\end{rem}

The following terminology, adopted from the setting of normed vector spaces, will be used later. Given $E\in \mathrm{P}_{\m}(\X)$, a set $\Phi\subseteq [\M^{\star}]_{\leq 1}$ will be called norming for $\M$ over $E$ if, given any $v\in \M$, we have
     \begin{equation}
        |v|_{\M}=\bigvee\limits_{\phi\in \Phi} \big|\phi(v)\big|_{\Leb_{\m}}\qquad \text{$\m$-a.e. on $E$}.
    \end{equation}

As we adopt highly nonstandard terminology in our work, we record separately the following proposition, which says essentially that the dual of a Euclidean finite-dimensional module is again Euclidean. Of course, a similar statement, with appropriate modifications, for infinite-dimensional modules is also valid (see, for instance, \cite[Proposition 1.2.24]{G18}) but would require invoking additional machinery, so we prefer to stick with the simpler version below.
   \begin{prop}\label{prop:EuclOrthDualOrth}
       Suppose the module $\M$ is Euclidean. Suppose $\dim_{\M}(\X)<+\infty$. Then the module $\M^{\star}$ is also Euclidean. In addition, given $E\in \P_{\m}(\X)$ and $d\in \Nat_0$, if a system $\bm{u}\in \Mat^d(\M)$ forms an orthogonal $d$-basis of $\M$ over $E$, then the system $\Dual_d^E(\bm{u})$ forms an orthogonal $d$-basis of $\M^{\star}$ over $E$.
\begin{proof}
   The proof is quite elementary and is similar to what one would do in the setting of finite-dimensional vector spaces, so we present only its brief outline. To start with, one needs to apply \cref{prop:DimDecompMod} to find a $\m$-partition $(E_m)_{m\in \mathbb{N}}$ of $\X$ such that, for each $m\in \Nat$ with $\m(E_m)\neq 0$, the module $\M$ admits a basis, say $\bm{u}^m$, over $E_m$. After that, one needs to define $\langle \cdot, \cdot \rangle_{\M^{\star}}$ by declaring, for each $m\in \Nat$ with $\m(E_m)\neq 0$, the dual system of $\bm{u}_m$ to be an orthonormal basis of $\M^{\star}$ over $E_m$. This can be checked to determine $\langle \cdot, \cdot \rangle_{\M^{\star}}$ completely. As the last step, one needs to verify the condition in \eqref{eq:InProdNorm}, which holds due to the orthogonality of the systems and their dual systems. In turn, the second part of the statement follows easily from the first one.
\end{proof}   
   \end{prop}

\section{On Auerbach bases}\label{ss:AueBas}

In this section, we discuss the notion of so-called Auerbach bases, first in the classical case of vector spaces and then in the context of normed modules. This concept, while not central, and not even fully necessary as mentioned in \cref{rem:RedAueBas}, for the current work, will be used in the sequel, in \cref{ss:ADF} specifically, as an important technical tool for the corresponding proofs. To our knowledge, in the literature this topic was never addressed in such a framework.
\subsection{Classical notion and auxiliary facts} We start by recalling the classical definition of Auerbach bases and verifying several related statements.

 As can be derived from the below definition, Auerbach bases represent a natural generalization of orthonormal bases in Euclidean spaces to arbitrary normed spaces. We cover only the finite-dimensional case, as it is much simpler and completely suffices for our needs. For a more detailed exposition of this matter, we refer the reader to \cite[Section 1.2]{HSVZ08}.
\begin{defi}\label{def:AueBasVec}
    Let $d\in \mathbb{N}_0$, $p\in \mathsf{P}^d_+$. A system $\bm{u}\in \Mat^d(\mathbb{R}^d)$ will be called a $p$-Auerbach basis of $\mathbb{R}^d$ if the following conditions hold:
    \begin{enumerate}
        \item[$\rm I)$] the system $\bm{u}$ forms a $d$-basis of $\R^d$;
        \item[$\rm II)$] one has $p(u_i)=1$ for any $i\in \overline{1,d}$;
        \item[$\rm III)$] putting $\bm{\psi}\coloneqq \Dual_d(\bm{u})$, one has $p^*(\psi^j)=1$ for any $j\in \overline{1,d}$.
    \end{enumerate}
\end{defi}
\noindent In the above terms, by $\Aue_d[p]$, we will denote the subcollection of $\GL_d$ made of all $p$-Auerbach bases of $\mathbb{R}^d$. 

Let us note separately that, as easily follows from the above definition, any Auerbach basis of a Euclidean space is exactly an orthonormal basis therein.

The well-known existence result in the context of Auerbach bases is the following one.
\begin{theo}\label{theo:ClAueExist}
Given $d\in \mathbb{N}_0$ and $p\in \mathsf{P}^d_+$, the set $\Aue_d[p]$ is nonempty.
\begin{proof}
    The proof can be found in \cite[Theorem 1.16]{HSVZ08}.
\end{proof}
\end{theo}

For what comes further, we need the following simple statement. We give a detailed proof for the sake of completeness.
\begin{prop}\label{prop:AueComp}
    Let $d\in \mathbb{N}_0$, $p\in \mathsf{P}^d_+$. Then the set $\Aue_d[p]$ is compact.
    \begin{proof}
        In the case $d=0$ everything trivializes, so we may assume that $d>0$.
        
        As the space $\GL_d$ is Polish, it suffices to check the sequential compactness. Thus, fix an arbitrary sequence $(\bm{u}^n)_{n\in \mathbb{N}}\subseteq \Aue_d[p]$ and then, for each $n\in \mathbb{N}$, put $\bm{\psi}_n\coloneqq \Dual_d(\bm{u}^n)$.
        
        Given any $i\in \overline{1,d}$, we know that $p(u^n_i)=1$ for any $n\in \mathbb{N}$, whence the set of limit points of any subsequence of $(u_i^n)_{n\in \mathbb{N}}$ is nonempty and lies on the unit $p$-sphere. Similar arguments show that, given any $j\in \overline{1,d}$, the set of limit points of any subsequence of $(\psi_n^j)_{n\in \mathbb{N}}$ again is nonempty and lies on the unit $p^*$-sphere. Therefore, as can be readily checked, there exist $\bm{u}\in \Mat^d(\mathbb{R}^d)$ and $\bm{\psi}\in \Mat^d(\mathbb{R}^d)$, the components of which lie on the unit $p$-sphere and $p^*$-sphere, respectively, such that the system $(\bm{u},\bm{\psi})$ is a limit point of $\big((\bm{u}^n,\bm{\psi}_n)\big)_{n\in \mathbb{N}}$ in $\GL_d\times \GL_d$.

        We now observe that the duality pairing
        \begin{equation}
            \mathbb{R}^d\times \mathbb{R}^d\to \mathbb{R}\colon (v,\phi)\mapsto \phi(v)
        \end{equation}
        is continuous, whence, with the use of what is said above, it is not hard to get that the system $(\bm{u},\bm{\psi})$, as the limit of $d$-biorthogonal systems, is still $d$-biorthogonal, which, in particular, implies that $\bm{u}\in \GL_d$, as is well known from the standard linear algebra.

        All the conditions from \cref{def:AueBasVec} are satisfied for $\bm{u}$, so the system $\bm{u}$ indeed forms a $p$-Auerbach basis of $\mathbb{R}^d$, as stated. The proof is thus complete.
    \end{proof}
\end{prop}

While the set of Auerbach bases is nonempty for any given norm, for the next subsection we need to know that we can make a Borel selection within this correspondence. This is the essence of the lemma below.
\begin{lem}\label{lem:SelOfAueBas}
    Let $d\in \mathbb{N}_0$. Then there exists a Borel map
    \begin{equation}
    \mathrm{Sel}_d\colon \mathsf{P}^d_+\to \GL_d
    \end{equation}
    such that
    \begin{equation}
    \Sel_d(p)\in \Aue_d[p]\qquad \text{for any $p\in \mathsf{P}^d_+$}.
    \end{equation}
    \end{lem}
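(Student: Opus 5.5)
The plan is to obtain $\Sel_d$ from a measurable selection theorem applied to the set-valued map $p\mapsto \Aue_d[p]$. The natural candidate is the Kuratowski--Ryll-Nardzewski theorem. Its hypotheses are that the target $\GL_d$ is Polish, that the values are nonempty and closed, and that the correspondence is weakly measurable. Alternatively, one can use the selection theorem for closed-graph correspondences with compact values (a Borel graph with $\sigma$-compact sections also suffices, by Arsenin--Kunugui). Nonemptiness of the values is \cref{theo:ClAueExist} and compactness is \cref{prop:AueComp}, so what remains is the regularity of the correspondence in $p$. The case $d=0$ is trivial, so we assume $d>0$.

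\textbf{Step 1.} We show that the graph
\begin{equation}
G\coloneqq\big\{(p,\bm{u})\in \mathsf{P}^d_+\times \GL_d \bigm| \bm{u}\in \Aue_d[p]\big\}
\end{equation}
is closed in $\mathsf{P}^d_+\times\GL_d$. Take $(p_n,\bm{u}^n)\to(p,\bm{u})$ with $\bm{u}^n\in\Aue_d[p_n]$.
\begin{itemize}
\item The evaluation $(q,w)\mapsto q(w)$ is jointly continuous on $\mathsf{P}^d\times\R^d$, since the metric on $\mathsf{P}^d$ is uniform convergence on the $\mathsf{p}_2$-unit ball. Hence $p(u_i)=\lim_n p_n(u^n_i)=1$.
\item The map $\bm{u}\mapsto\Dual_d(\bm{u})$ is continuous on $\GL_d$, because it is matrix inversion. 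Hence the dual systems $\bm{\psi}_n$ converge to $\bm{\psi}=\Dual_d(\bm{u})$.
\item It remains to show that $q\mapsto q^*$ is continuous on $\mathsf{P}^d_+$ uniformly on compact sets, or at least that $p_n^*(\psi_n^j)\to p^*(\psi^j)$. Since $p_n\to p$ uniformly on the $\mathsf{p}_2$-sphere and $p>0$ there, we have $(1-\delta_n)p\le p_n\le(1+\delta_n)p$ with $\delta_n\to 0$. Dualizing gives $(1+\delta_n)^{-1}p^*\le p_n^*\le(1-\delta_n)^{-1}p^*$, and hence $p^*(\psi^j)=1$.
\end{itemize}
Therefore $G$ is closed.

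\textbf{Step 2.} We deduce weak measurability, i.e.\ that for every open $U\subseteq\GL_d$ the set $\{p \mid \Aue_d[p]\cap U\neq\varnothing\}$ is Borel. It suffices to show upper hemicontinuity, namely that $\{p \mid \Aue_d[p]\cap F\ne\varnothing\}$ is closed for each closed $F$; open sets in the Polish space $\GL_d$ are countable unions of closed sets, so the claim follows. Take $p_n\to p$ in $\mathsf{P}^d_+$ and $\bm{u}^n\in\Aue_d[p_n]\cap F$. Uniform local boundedness is the key point. The bounds $p_n\ge(1-\delta_n)p$ and $p_n^*\ge(1+\delta_n)^{-1}p^*$ keep the vectors $u^n_i$ and the duals $\psi_n^j$ in a fixed compact subset of $\R^d$, exactly as in the proof of \cref{prop:AueComp}. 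So a subsequence converges in $\Mat^d(\R^d)$, and biorthogonality of the limit puts it in $\GL_d$. By Step 1 the limit lies in $\Aue_d[p]$, and it lies in $F$ because $F$ is closed.

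\textbf{Step 3.} Kuratowski--Ryll-Nardzewski, applied on the Polish space $\mathsf{P}^d_+$ (an open subset of a Polish space), now yields a Borel selector $\Sel_d$.

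The main obstacle is the compactness argument in Step 2. One must ensure that limits stay inside $\GL_d$ rather than degenerating, and this is handled by carrying the dual systems along, as in \cref{prop:AueComp}. Continuity of the duality $p\mapsto p^*$ is the other point needing care. Both are elementary once the two-sided comparison $(1\pm\delta_n)p$ is in place.
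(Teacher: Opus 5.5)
Your proof is correct, but it obtains the selector through a different selection theorem than the paper.

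Your Step 1 is exactly the paper's argument. The paper notes that $(p,\bm{v})\mapsto p(\bm{v})$, $p\mapsto p^*$ and $\bm{v}\mapsto \Dual_d(\bm{v})$ are continuous. Hence the graph of $p\mapsto \Aue_d[p]$ is the preimage of a point under a continuous map on $\mathsf{P}^d_+\times\GL_d$, so it is Borel (indeed closed). The paper then applies \cref{theo:BorSelTh}, the Borel uniformization theorem for Borel sets with compact sections. The only inputs are that each section is nonempty (\cref{theo:ClAueExist}) and compact (\cref{prop:AueComp}). Nothing about how $\Aue_d[p]$ varies with $p$ is needed. Your parenthetical alternative (closed graph with compact values, or Arsenin--Kunugui) is precisely this route.

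Your main line instead uses Kuratowski--Ryll-Nardzewski, which is already stated in the paper as \cref{theo:MeasSelTh}. That theorem requires weak measurability, which you get from upper hemicontinuity via the locally uniform comparison $(1-\delta_n)p\le p_n\le(1+\delta_n)p$ and its dual. The trade-off is as follows:
\begin{itemize}
\item You replace a deeper descriptive-set-theoretic uniformization result with the more classical selection theorem, so \cref{theo:BorSelTh} could be dropped from the paper altogether.
\item In exchange you need an extra estimate, and you also learn that $p\mapsto\Aue_d[p]$ is upper hemicontinuous.
\end{itemize}

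Your Step 2 can be shortened. Since $\GL_d$ is an open subset of a Euclidean space, every open $U\subseteq\GL_d$ is a countable union of compact sets $K_k$. The projection of the closed set $G\cap(\mathsf{P}^d_+\times K_k)$ onto $\mathsf{P}^d_+$ is closed, because projection along a compact factor is a closed map. So $\{p\mid \Aue_d[p]\cap U\neq\varnothing\}$ is an $F_\sigma$ set directly from Step 1, without the uniform bounds.
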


    For the proof of the given statement we will exploit the following Borel selection result.
    \begin{theo}\label{theo:BorSelTh}
    Let $\Omega$, $\widetilde{\Omega}$ be Polish spaces, let $\Delta\subseteq \Omega\times \widetilde{\Omega}$ be a Borel set. Suppose the sets
    \begin{equation}\label{eq:BorSelTh0}
        \big\{\widetilde{\omega}\in \widetilde{\Omega}\mid (\omega,\widetilde{\omega})\in \Delta\big\},\qquad \text{$\omega\in \Omega$},
    \end{equation}
    are nonempty and compact. Then there exists a Borel map $\delta\colon \Omega\to \widetilde{\Omega}$ such that
    \begin{equation}
        \big(\omega, \delta(\omega)\big)\in \Delta\qquad \text{for any $\omega\in \Omega$}.
    \end{equation}
    \begin{proof}
        The proof is given in \cite[Theorem 35.46]{K12}.
    \end{proof}
\end{theo}

\begin{rem}
    In fact, the conclusion in the theorem above still holds if one requires from the sets in \eqref{eq:BorSelTh0} that each of those is merely the union of a countable family of compact sets. Thus, as the subsequent proof shows, it would be enough if we proved in \cref{prop:AueComp} only that the sets of Auerbach bases are closed, since any closed subset of a locally compact, Polish space is of the above-described form.
\end{rem}

Now we provide the proof of the stated lemma.
    \begin{proof}[Proof of \cref{lem:SelOfAueBas}]
In the case $d=0$ there is nothing to prove, so further we assume that $d>0$ for simplicity. 

To begin with, we observe that the map
\begin{equation}
    \mathsf{P}^d\times \Mat^d(\mathbb{R}^d)\to \mathbb{R}^d\colon (p,\bm{v})\mapsto p(\bm{v})
\end{equation}
is continuous, as can be readily checked. Likewise, one can easily verify the continuity of the map
\begin{equation}
    \mathsf{P}^d_+\to \mathsf{P}^d_+\colon p\mapsto p^*.
\end{equation}
        Finally, the continuity of the map
        \begin{equation}
            \GL_d\to \GL_d\colon \bm{v}\mapsto \Dual_d(\bm{v})
        \end{equation}
        can be obtained in a straightforward way as well.

        Next, consider the map $\wbar{\Delta}\colon \mathsf{P}^d_+\times \GL_d\to \mathbb{R}^{d}\times \mathbb{R}^d$ given by
        \begin{equation}
          \wbar{\Delta}(p,\bm{v})\coloneqq \Big(p(\bm{v}), p^*\big(\Dual_d(\bm{v})\big)\Big).
        \end{equation}
         With all above in mind, it follows that this map is continuous and hence Borel. Consider then the Borel set
        \begin{equation}
            \Delta\coloneqq \wbar{\Delta}^{-1}\Big((1,...,1,0,...),(1,...,1,0,...)\Big),
        \end{equation}
        where in each component there are exactly $d$ of ones. As can be checked via \cref{def:AueBasVec}, the following equality holds for any given $p\in \mathsf{P}^d_+$:
        \begin{equation}
            \Big\{\bm{u}\in \GL_d \bigm| (p, \bm{u})\in \Delta\Big\}=\Aue_d[p].
        \end{equation}
        By \cref{theo:ClAueExist}, the sets on the left are nonempty. In turn, in light of \cref{prop:AueComp}, those are also compact. We are then exactly in a position to apply \cref{theo:BorSelTh}, which immediately leads us to the conclusion.
    \end{proof}

\subsection{Extension to normed modules} Now we transfer the introduced discourse to the context of normed modules.

For the current subsection, we fix a sigma-finite measure space $(\X,\m)$ and an $\Leb_{\m}$-normed $\Leb^{\infty}_{\m}$-module $\M=\big(\M, |\cdot|_{\M}\big)$.

Below we introduce a natural analog of Auerbach bases in the framework of normed modules.
\begin{defi}\label{def:AuerBasMod}
    Let $E\in \P_{\m}(\X)$, $d\in \mathbb{N}_0$. We say that a system $\bm{u}\in \Mat^d(\M)$ is an Auerbach $d$-basis of $\M$ over $E$ if the following conditions hold:
    \begin{enumerate}
        \item[$\rm I)$] the system $\bm{u}$ forms a $d$-basis of $\M$ over $E$.
        \item[$\rm II)$] one has, for any $i\in \overline{1,d}$, that  
        \begin{equation}
            |u_i|_{\M}=1 \qquad \text{$\m$-a.e. on $E$}.
        \end{equation}
        \item[$\rm III)$] putting $\bm{\psi}\coloneqq \Dual_d^E(\bm{u})$, one has, for any $j\in \overline{1,d}$, that 
        \begin{equation}
            |\psi^j|_{\M^{\star}}=1 \qquad \text{$\m$-a.e. on $E$}.
        \end{equation}
    \end{enumerate}
\end{defi}
\noindent Let us note the following: if the reference set above is of zero measure, then the listed requirements turn out to hold vacuously, regardless of the properties of the other entries. This will be tacitly used in what follows.

With the above definition given, we are in a position to formulate the main result of this section. Namely, it states that once a normed module admits a basis, this module also admits an Auerbach basis. This is quite reasonable to expect, as the latter notion is pointwise in nature. And the subsequent proof goes exactly on the base of this idea. The precise formulation is as follows.
\begin{theo}\label{theo:ExistAuerBasis}
    Let $E\in \mathrm{P}_{\m}(\X)$, $d\in \mathbb{N}_0$. Suppose the module $\M$ admits a $d$-basis over $E$. Then there is a $\m$-partition $(E_m)_{m\in \mathbb{N}}$ of $E$ such that, for each $m\in \mathbb{N}$, the module $\M$ admits an Auerbach $d$-basis over $E_m$.
\end{theo}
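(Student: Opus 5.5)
Fix a $d$-basis $\bm{v}\in\Mat^d(\M)$ of $\M$ over $E$; the case $d=0$ or $\m(E)=0$ is trivial, so assume $d>0$. The idea is to transfer the norm pointwise to $\R^d$, pick an Auerbach basis pointwise via \cref{lem:SelOfAueBas}, and pull it back to the module.

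\textbf{Step 1: the pointwise norm.} For each $\bm{\varrho}\in\mathbb{Q}^d$ the function $|\bm{\varrho}\cdot\bm{v}|_{\M}$ is a well-defined element of $\Leb_{\m}$. Fix $\m$-representatives of these countably many functions. Off a $\m$-null set, the resulting map $\bm{\varrho}\mapsto |\bm{\varrho}\cdot\bm{v}|_{\M}(x)$ is subadditive and positively homogeneous on $\mathbb{Q}^d$, since only countably many identities are involved. It therefore extends uniquely to a seminorm $p_x\in\mathsf{P}^d$. We claim $p_x$ is a norm for $\m$-a.e.\ $x\in E$. The set where $p_x$ degenerates is measurable, because $x\mapsto p_x$ is Borel into $\mathsf{P}^d$, which is checked on the dense countable family. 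On that set we would measurably select a unit vector $\bm{\varrho}(x)$ in the kernel of $p_x$, for instance by a Borel selection from a compact-valued correspondence via \cref{theo:BorSelTh}. Then $|\bm{\varrho}\cdot\bm{v}|_{\M}=0$, hence $\bm{\varrho}\cdot\bm{v}=\o_{\M}$, contradicting $d$-independence unless the set is null. The identity $|\bm{\varrho}\cdot\bm{v}|_{\M}(x)=p_x(\bm{\varrho}(x))$ for bounded measurable $\bm{\varrho}$ follows by approximating with simple $\mathbb{Q}^d$-valued functions and using the module norm axioms.

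\textbf{Step 2: the candidate basis.} Set $\bm{A}(x)\coloneqq\Sel_d(p_x)\in\GL_d$. This is measurable as a composition of Borel maps, and we define $u_i\coloneqq \sum_k A_{i}^k v_k$, where $A_i^k$ is the $k$-th coordinate of the $i$-th vector of $\bm{A}$. The coefficients need not be bounded, so we choose a $\m$-partition $(E_m)$ of $E$ on which both $\bm{A}$ and the inverse matrix $\bm{A}^{-1}$ are bounded by $m$; this is possible because $\bm{A}(x)$ is invertible a.e. On each $E_m$, the system $\bm{u}^m\coloneqq\1_{E_m}\bm{A}\cdot\bm{v}$ lies in $\Mat^d(\M)$. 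It is a $d$-basis over $E_m$, since the change of basis is invertible with $\Leb^\infty$ coefficients. Step 1 then gives $|u^m_i|_{\M}=p_x(A_i(x))=1$ a.e.\ on $E_m$.

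\textbf{Step 3: the dual norm, the main obstacle.} We must show that the dual system $\bm{\psi}=\Dual_d^{E_m}(\bm{u}^m)$ satisfies $|\psi^j|_{\M^\star}(x)=p_x^*(\Dual_d(\bm{A}(x))^j)$, which equals $1$ by the choice of $\Sel_d$. The inequality ``$\le$'' comes from writing any $w\in[\M]_{\le1}$ locally as $\bm{\varrho}\cdot\bm{u}^m$, using the tacit consequence of \cref{prop:ExistOfFunc}, and bounding $|\psi^j(w)|=|\varrho_j|$ pointwise by $p_x^*(\cdot)\,p_x(\cdot)\le p^*_x(\cdot)$. For ``$\ge$'' I would pick, measurably in $x$, a $p_x$-unit vector $\bm{\eta}(x)$ attaining $p_x^*$ of the $j$-th dual vector, again by \cref{theo:BorSelTh} applied to a compact-valued Borel correspondence. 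The element $w=\bm{\eta}\cdot\bm{u}^m$, with $\bm{\eta}$ bounded since it is a unit vector and $\bm{A}$ is bounded on $E_m$, then lies in $[\M]_{\le1}$ and realizes the value. The delicate points are the measurability of these selections and the bookkeeping showing the identities hold $\m$-a.e.; I expect this to be the bulk of the work. Reindexing the partitions over all pieces yields the $\m$-partition claimed in the theorem.
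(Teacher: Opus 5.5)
Your proposal is correct and follows essentially the paper's route: the a.e.\ nondegenerate pointwise norm $\norm_x$ of \cref{prop:ExistPointNorm}, composition with $\Sel_d$ from \cref{lem:SelOfAueBas}, a partition making the coefficients bounded, and the pullback $u^m_i=(\1_{E_m}\bm{\xi}_i)\cdot\bm{v}$. For nondegeneracy the paper uses Kuratowski--Ryll-Nardzewski, while you select Borel-ly on $\mathsf{P}^d$ and compose, which works equally well; for condition III) the paper proves the dual-norm identity $\norm_x^*(\bm{\lambda}(x))=|\bm{\lambda}\cdot\bm{\phi}|_{\M^{\star}}(x)$ for $\bm{\phi}=\Dual_d^E(\bm{v})$ and uses $\psi^j_m=\bm{\lambda}^j\cdot\bm{\phi}$. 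The direction you flag as the main obstacle is in fact immediate: $u^m_j\in[\M]_{\leq 1}$ and $\psi^j(u^m_j)=1$ on $E_m$ give $|\psi^j|_{\M^{\star}}\geq 1$ there with no further selection; and your test element should have been $\bm{\eta}\cdot\bm{v}$ rather than $\bm{\eta}\cdot\bm{u}^m$, since $\bm{\eta}$ lives in $\bm{v}$-coordinates.
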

\noindent In the statement above we need to pass to such a partition purely because we deal here with merely an $\Leb_{\m}^{\infty}$-module. As the subsequent proof demonstrates, if one considered an $\Leb_{\m}$-module, one could get the existence of an Auerbach basis over the initial set as well.

For the proof of the theorem above we will need the auxiliary lemma below, which can be useful on its own. And while quite similar statements can be found in many sources (see, for instance, \cite[Theorem 10.12]{W18}), we prefer to provide all the details for the sake of completeness.
\begin{lem}\label{prop:ExistPointNorm}
    Let $E\in \mathrm{P}_{\m}(\X)$, $d\in \mathbb{N}_0$, $\bm{v}\in \Mat^d(\M)$. Then there exists a $\m$-measurable map
    \begin{equation}\label{eq:ExistPointNorm0}
        \norm\colon E\to \mathsf{P}^d\colon x\mapsto \norm_x
    \end{equation}
    such that, given any $\bm{\xi}\in \Mat^{d}(\Leb^{\infty}_{\m})$, one has
    \begin{equation}\label{eq:ExistPointNorm00}
        \norm_x\big(\bm{\xi}(x)\big)=|\bm{\xi} \cdot \bm{v}|_{\M}(x)\qquad \text{for $\m$-a.e. $x\in E$}.
    \end{equation}
    Moreover, if the system $\bm{v}$ is $d$-independent over $E$, then one has
    \begin{equation}
        \norm_x\in \mathsf{P}^d_+\qquad \text{for $\m$-a.e. $x\in E$}.
    \end{equation}
    Furthermore, if the system $\bm{v}$ forms a $d$-basis of $\M$ over $E$, then, putting $\bm{\phi}\coloneqq \Dual_d^E(\bm{v})$, one has, for any $\bm{\lambda}\in \Mat^d(\Leb_{\m})$, that
    \begin{equation}\label{eq:ExistPointNorm000}
        \norm^*_x\big(\bm{\lambda}(x)\big)=|\bm{\lambda}\cdot\bm{\phi}|_{\M^{\star}}(x)\qquad \text{for $\m$-a.e. $x\in E$}.
    \end{equation}
    \end{lem}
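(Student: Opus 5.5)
The plan is to define $\norm_x$ first on a countable dense set of coefficient vectors and then extend it by homogeneity and continuity. For each $\bm{q}\in\mathbb{Q}^d$ (a constant system, hence in $\Mat^d(\Leb^\infty_\m)$), fix a $\m$-representative $f_{\bm{q}}\colon E\to\R_{\geq0}$ of $|\bm{q}\cdot\bm{v}|_\M$. Since $\mathbb{Q}^d$ is countable, the norm axioms hold simultaneously for all rational data outside a single $\m$-null set $N\subseteq E$. Concretely, for $x\in E\setminus N$ we have:
\begin{itemize}
\item subadditivity $f_{\bm{q}+\bm{q}'}(x)\leq f_{\bm{q}}(x)+f_{\bm{q}'}(x)$;
\item rational homogeneity $f_{r\bm{q}}(x)=|r|f_{\bm{q}}(x)$, $r\in\mathbb{Q}$;
\item the bound $f_{\bm{q}}(x)\leq \mathsf{p}_1(\bm{q})\max_i g_i(x)$, where $g_i$ is a finite representative of $|v_i|_\M$.
\end{itemize}
The last point is where the $\Leb_\m$-valuedness of the norm is used. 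Thus $\bm{q}\mapsto f_{\bm{q}}(x)$ is a $\mathbb{Q}$-seminorm on $\mathbb{Q}^d$, Lipschitz for $\mathsf{p}_1$, and it extends uniquely to a seminorm $\norm_x\in\mathsf{P}^d$. On $N$ we set $\norm_x\coloneqq 0$.

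Measurability of $x\mapsto\norm_x$ into the separable metric space $\mathsf{P}^d$ follows because the metric there is a supremum over the unit $\mathsf{p}_2$-ball, which may be replaced by a supremum over rational points. Hence $x\mapsto \mathrm{dist}(\norm_x,p)$ is a countable supremum of $\m$-measurable functions, so preimages of balls are measurable.

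To get \eqref{eq:ExistPointNorm00} for general $\bm{\xi}\in\Mat^d(\Leb^\infty_\m)$, I would first treat simple functions $\bm{\xi}=\sum_k \1_{A_k}\bm{q}_k$ with rational values. Locality gives $\1_{A_k}|\bm{\xi}\cdot\bm{v}|_\M=\1_{A_k}|\bm{q}_k\cdot\bm{v}|_\M$, using $\Norm(\1_A w)=\1_A\Norm(w)$ and the triangle inequality, and this identity is the claim on each $A_k$. For general $\bm{\xi}$, I approximate uniformly by such simple $\bm{\xi}_n$ with $\|\mathsf{p}_1(\bm{\xi}-\bm{\xi}_n)\|_{\Leb^\infty_\m}\to0$. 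On the module side the triangle inequality gives $\big||\bm{\xi}\cdot\bm{v}|_\M-|\bm{\xi}_n\cdot\bm{v}|_\M\big|\leq \mathsf{p}_1(\bm{\xi}-\bm{\xi}_n)\max_i|v_i|_\M$ a.e., and the same estimate holds for $\norm_x$ by the Lipschitz bound. Passing to the limit yields the identity. The step I expect to need the most care is this approximation, since $\bm{\xi}$ is only bounded; the error bound involves $\max_i|v_i|_\M$, which is finite a.e., so the uniform approximation suffices.

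For the second claim, suppose $\norm_x$ is not a norm on a set of positive measure. Then for each such $x$ there is a kernel vector $\bm{a}$ with $\mathsf{p}_2(\bm{a})=1$, and the set $\{(x,\bm{a}):\norm_x(\bm{a})=0\}$ is measurable with compact nonempty sections over a positive-measure set $F$. A measurable selection, via \cref{theo:BorSelTh} after passing to a Borel representative of $\norm$ on a full-measure Borel subset, produces $\bm{\xi}\in\Mat^d(\Leb^\infty_\m)$ with $\mathsf{p}_2(\bm{\xi})=1$ on $F$ and $|\bm{\xi}\cdot\bm{v}|_\M=0$ there. Taking $\1_F\bm{\xi}$ then gives a nonzero system with $\1_F\bm{\xi}\cdot\bm{v}=\o_\M$, contradicting $d$-independence.

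For the dual formula \eqref{eq:ExistPointNorm000}, the approach is to prove the two inequalities separately. By $\Leb^\infty$-linearity and biorthogonality we have $(\bm{\lambda}\cdot\bm{\phi})(\bm{\xi}\cdot\bm{v})=\bm{\lambda}\cdot\bm{\xi}$ on $E$.
\begin{itemize}
\item The bound ``$\leq$'' comes from testing the dual norm against elements of the form $\bm{\xi}\cdot\bm{v}$ with $\norm_x(\bm{\xi}(x))\leq1$. Again by measurable selection, choose $\bm{\xi}(x)$ attaining $\norm^*_x(\bm{\lambda}(x))$ and bounded on pieces, where $\bm{\lambda}$ is truncated to pieces of a partition.
\item The bound ``$\geq$'' uses that any $w\in[\M]_{\leq1}$ decomposes on a $\m$-partition as $w=\bm{\varrho}_m\cdot\bm{v}$. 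Then $|(\bm{\lambda}\cdot\bm{\phi})(w)|=|\bm{\lambda}\cdot\bm{\varrho}_m|\leq\norm^*_x(\bm{\lambda}(x))\,\norm_x(\bm{\varrho}_m(x))\leq\norm^*_x(\bm{\lambda}(x))$ a.e.
\end{itemize}
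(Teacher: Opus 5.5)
Your proposal is correct and follows essentially the paper's route: a $\mathbb{Q}$-seminorm built from representatives of $|\bm{q}\cdot\bm{v}|_{\M}$, $\bm{q}\in\mathbb{Q}^d$, extended by continuity; measurability via rational evaluations; a measurable-selection contradiction for non-degeneracy; and two inequalities for \eqref{eq:ExistPointNorm000}, where your decomposition argument for $|\bm{\lambda}\cdot\bm{\phi}|_{\M^{\star}}(x)\leq\norm^*_x\big(\bm{\lambda}(x)\big)$ is exactly the paper's, and you additionally spell out the passage from constant rational $\bm{q}$ to general bounded $\bm{\xi}$ in \eqref{eq:ExistPointNorm00}, which the paper leaves implicit. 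Two small remarks: since $(\X,\m)$ is here an abstract measure space, there is no ``Borel representative of $\norm$'', so you should instead apply \cref{theo:BorSelTh} on the Polish space $\mathsf{P}^d\setminus\mathsf{P}^d_+$ and compose the resulting selector with $\norm$ (or use \cref{theo:MeasSelTh} directly, as the paper does); and for $\norm^*_x\big(\bm{\lambda}(x)\big)\leq|\bm{\lambda}\cdot\bm{\phi}|_{\M^{\star}}(x)$ you do not need a measurable selection of maximizers at all, because the inequality $\big|\bm{\lambda}(x)\cdot\bm{q}\big|\leq|\bm{\lambda}\cdot\bm{\phi}|_{\M^{\star}}(x)\,\norm_x(\bm{q})$ for the countably many constants $\bm{q}\in\mathbb{Q}^d$, together with continuity of $\norm_x$, already gives it.
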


For the proof of (a part of) the above lemma, we shall exploit the following selection result, known as the Kuratowski--Ryll-Nardzewski theorem.
\begin{theo}\label{theo:MeasSelTh}
    Let $X$ be a set, let $\mathfrak{A}$ be a sigma-algebra on $X$, let $E\in \mathfrak{A}$, let $\Omega$ be a Polish space. Suppose we are given, for every $x\in E$, a nonempty, closed set $\Delta(x)\subseteq \Omega$. Suppose that, for any open set $O\subseteq \Omega$, we have
    \begin{equation}
        \Big\{x\in E\bigm| \Delta(x)\cap O\neq \varnothing\Big\}\in \mathfrak{A}.
    \end{equation}
    Then there exists an $\mathfrak{A}$-measurable map $\delta\colon E\to \Omega$ such that
    \begin{equation}
        \delta(x)\in \Delta(x)\qquad \text{for every $x\in E$}.
    \end{equation}
    \begin{proof}
        The proof can be found in \cite[Theorem 5.2.1]{S98}.
    \end{proof}
\end{theo}

We also need the following auxiliary statement.
\begin{prop}\label{prop:WBorSN}
    Let $X$ be a set, let $\mathfrak{A}$ be a sigma-algebra on $X$, let $E\in \mathfrak{A}$, $d\in \mathbb{N}_0$, let $\Norm\colon E\to \mathsf{P}^d\colon x\mapsto \Norm_x$ be a map such that the functions
    \begin{equation}
        E\to \R_{\geq 0}\colon x\mapsto \Norm_x(\xi),\qquad \xi\in \mathbb{Q}^d,
    \end{equation}
   are $\mathfrak{A}$-measurable. Then the map $\Norm$ is $\mathfrak{A}$-measurable.
   \begin{proof}
       The proof can be carried out with the use of quite standard measure-theoretic techniques, so we omit routine details.
   \end{proof}
\end{prop}

Now we return to \cref{prop:ExistPointNorm}.
    \begin{proof}[Proof of \cref{prop:ExistPointNorm}]
         Since the statement is clearly local, we may assume without loss of generality that there is $\zeta\in \R_{>0}$ with
         \begin{equation}\label{eq:ExistPointNorm1}
             \mathsf{p}_{\infty}\big(|\bm{v}|_{\M}\big)\leq \zeta\qquad \text{$\m$-a.e. on $E$}.
         \end{equation}
         Then, for every $\bm{\xi} \in \mathbb{Q}^d$, choose a finite and nonnegative $\m$-representative, say $\rho_{\bm{\xi}}$, of $\big|(\bm{\xi}\1_{\X})\cdot \bm{v}\big|_{\M}$. Given any $\bm{\xi} \in  \mathbb{Q}^d$, let $E_{\bm{\xi}}$ be the set of all $x\in E$ satisfying the following conditions:
         \begin{gather}
             \rho_{\bm{\xi} +\bm{\xi}'}(x)\leq \rho_{\bm{\xi}}(x)+\rho_{\bm{\xi}'}(x)\qquad \text{for any $\bm{\xi}'\in  \mathbb{Q}^d$};\label{eq:ExistPointNorm2}\\
            \rho_{\eta\bm{\xi}}(x)=|\eta|\rho_{\bm{\xi}}(x)\qquad \text{for any $\eta\in \mathbb{Q}$};\label{eq:ExistPointNorm3}\\
             \rho_{\bm{\xi}}(x)\leq \zeta \mathsf{p}_1(\bm{\xi})\label{eq:ExistPointNorm4}.
         \end{gather}
         The defining properties of $\Leb_{\m}$-norms, together with \eqref{eq:ExistPointNorm1}, easily imply that $\m\big(E\backslash E_{\bm{\xi}}\big)=0$ for any $\bm{\xi}\in \mathbb{Q}^d$, whence, with
         \begin{equation}
             \widetilde{E}\coloneqq \bigcap\limits_{\bm{\xi}\in \mathbb{Q}^d} E_{\bm{\xi}},
         \end{equation}
         we also have $\m\big(E\backslash \widetilde{E}\big)=0$. By \eqref{eq:ExistPointNorm2} and \eqref{eq:ExistPointNorm3}, we know that, for any $x\in \widetilde{E}$, the map
         \begin{equation}
             \mathbb{Q}^d\to \mathbb{R}_{\geq 0}\colon \bm{\xi}\mapsto \rho_{\bm{\xi}}(x)
         \end{equation}
         is a $\mathbb{Q}$-seminorm on $\mathbb{Q}^d$, i.e., a $\mathbb{Q}$-homogeneous, subadditive function $\mathbb{Q}^d\to \R_{\geq 0}$, which, by \eqref{eq:ExistPointNorm4}, can be seen to be continuous with respect to the topology on $\mathbb{R}^d$. Such ones are known to be uniquely extendable to ordinary seminorms, i.e., $\R$-seminorms, on $\mathbb{R}^d$. So, for any $x\in \widetilde{E}$, let $\norm_x$ be the corresponding seminorm obtained by this procedure. In turn, for any $x\in \big(E\backslash \widetilde{E}\big)$, we declare $\mathsf{n}_x$ to be the trivial seminorm on $\mathbb{R}^d$. All this gives us the required map $\norm\colon E\to \mathsf{P}^d$.
         
         To check that the map $\mathsf{n}$ is $\m$-measurable, we notice that
         \begin{equation}
             \norm_x(\bm{\xi})=\rho_{\bm{\xi}}(x)\qquad \text{for any $\bm{\xi}\in \mathbb{Q}^d$ and any $x\in \widetilde{E}$},
         \end{equation}
         which implies that the functions
         \begin{equation}
             \widetilde{E}\to \mathbb{R}_{\geq 0}\colon x\mapsto \norm_x(\bm{\xi}),\qquad \text{$\bm{\xi}\in \mathbb{Q}^d$},
         \end{equation}
         are $\m$-measurable. As can be deduced from \cref{prop:WBorSN}, the map $\norm$ itself is then $\m$-measurable.

         We move to the second part, so we assume further that the system $\bm{v}$ is $d$-independent over $E$. Arguing by contradiction, we may suppose that there is $E'\in \P^+_{\m}(\widetilde{E})$ such that
         \begin{equation}\label{eq:ExistPointNorm6}
             \mathsf{n}_x\notin \mathsf{P}^d_+\qquad \text{for any $x\in E'$}.
         \end{equation}
         Then, for any $x\in E'$, we define the set
         \begin{equation}\label{eq:ExistPointNorm7}
             \bm{\Xi}(x)\coloneqq \Big\{\bm{\xi}\in \R^d \bigm| \norm_x(\bm{\xi})=0, \mathsf{p}_{\infty}(\bm{\xi})=1\Big\},
         \end{equation}
         which is nonempty by \eqref{eq:ExistPointNorm6} and, by trivial reasons, also compact. We would like to find a measurable selection of this multifunction, so we need to check that all the conditions of \cref{theo:MeasSelTh} are met. To this end, given any $n\in \mathbb{N}$, we define, for any $x\in E'$, the set
         \begin{equation}
             \bm{\Xi}_n(x)\coloneqq \Big\{\bm{\xi}\in \R^d\bigm| \norm_x(\bm{\xi})\leq \tfrac{1}{n}, \big|\mathsf{p}_{\infty}(\bm{\xi})-1\big|\leq\tfrac{1}{n}\Big\},
         \end{equation}
         the set
         \begin{equation}
             \bm{\Xi}_n\coloneqq \Big\{\bm{\xi}\in \mathbb{Q}^d\bigm| \big|\mathsf{p}_{\infty}(\bm{\xi})-1\big|\leq\tfrac{1}{n}\Big\},
         \end{equation}
         and, for any $\bm{\xi} \in \mathbb{Q}^d$, the set
         \begin{equation}
             E'(\bm{\xi},n)\coloneqq \Big\{x\in E'\bigm| \norm_x(\bm{\xi})\leq\tfrac{1}{n}\Big\},
         \end{equation}
         which can be seen to be $\m$-measurable by the first part of the current lemma. Now, picking an open set $O\subseteq \mathbb{R}^d$ and using the continuity of seminorms on $\mathbb{R}^d$, we can write that
         \begin{equation}
         \begin{gathered}
             \Big\{x\in E'\bigm| \bm{\Xi}(x)\cap O\neq \varnothing\Big\}=\bigcap\limits_{n\in \mathbb{N}} \bigcup\limits_{\bm{\xi}\in \mathbb{Q}^d\cap O} \Big\{x\in E'\bigm| \bm{\xi}\in \bm{\Xi}_n(x)\Big\}=\\
             =\bigcap\limits_{n\in \mathbb{N}} \bigcup\limits_{\bm{\xi}\in \bm{\Xi}_n\cap O} E'(\bm{\xi},n),
             \end{gathered}
         \end{equation}
         which immediately gives us that the set on the left is $\m$-measurable. We are exactly in a position to apply \cref{theo:MeasSelTh}, so, after unfolding the conclusion appropriately, we can find $\bm{\xi}\in \Mat^d(\Leb^{\infty}_{\m})$ such that
         \begin{equation}\label{eq:ExistPointNorm8}
             \bm{\xi}(x)\in \bm{\Xi}(x)\qquad \text{for $\m$-a.e. $x\in E'$},
         \end{equation}
         whence, with the use of \eqref{eq:ExistPointNorm00}, we get that
         \begin{equation}
            | \bm{\xi}\cdot  \bm{v}|_{\M}(x)=\norm_x\big(\bm{\xi}(x)\big)=0 \qquad \text{for $\m$-a.e. $x\in E'$}.
         \end{equation}
         By the defining properties of $\Leb_{\m}$-norms, this implies immediately that
         \begin{equation}
             \bm{\xi}\cdot \bm{v}=\o_{\M}\qquad \text{on $E'$},
         \end{equation}
         whence, as the system $\bm{v}$ is $d$-independent over $E$, we have
         \begin{equation}
             \bm{\xi}=\bm{\o}_{\Leb_{\m}}\qquad \text{$\m$-a.e. on $E'$},
         \end{equation}
         which contradicts \eqref{eq:ExistPointNorm7} and \eqref{eq:ExistPointNorm8}, as declared.

         We are left with the third part, so we assume further that the system $\bm{v}$ forms a $d$-basis of $\M$ over $E$. Put then $\bm{\phi}\coloneqq  \Dual^E_d(\bm{v})$.

         Fix $\bm{\lambda}\in \Mat^d(\Leb_{\m})$. As the system $(\bm{v},\bm{\phi})$ is $d$-biorthogonal over $E$, for any $\bm{\xi}\in \Mat^d(\Leb^{\infty}_{\m})$, we have
         \begin{equation}
             (\bm{\lambda}\cdot \bm{\xi})(x)=\big((\bm{\lambda}\cdot \bm{\phi})(\bm{\xi}\cdot \bm{v})\big)(x)\qquad \text{for $\m$-a.e. $x\in E$}.
         \end{equation}
       In view of the first part of the current lemma, this implies that, for any $\bm{\xi}\in \Mat^d(\Leb^{\infty}_{\m})$, we have
         \begin{equation}
             \big|\bm{\lambda}(x)\cdot\bm{\xi}\big|\leq |\bm{\lambda}\cdot\bm{\phi}|_{\M^{\star}}(x) |\bm{\xi}\cdot \bm{v}|_{\M}(x)=|\bm{\lambda}\cdot\bm{\phi}|_{\M^{\star}}(x) \norm_x\big(\bm{\xi}(x)\big)\qquad \text{for $\m$-a.e. $x\in E$},
         \end{equation}
         whence it clearly holds, for $\m$-a.e. $x\in E$, that
         \begin{equation}
             \big|\bm{\lambda}(x)\cdot \bm{\xi}\big|\leq |\bm{\lambda}\cdot \bm{\phi}|_{\M^{\star}}(x) \norm_x(\bm{\xi})\qquad \text{for any $\bm{\xi}\in \mathbb{Q}^d$}
         \end{equation}
         and hence that
         \begin{equation}
             \norm_x^*\big(\bm{\lambda}(x)\big)\leq |\bm{\lambda}\cdot \bm{\phi}|_{\M^{\star}}(x).
         \end{equation}
         For the opposite inequality, we fix $v\in \M$ and find a $\m$-partition $(E_m)_{m\in \mathbb{N}}$ of $E$ such that, for each $m\in \Nat$, we have $\1_{E_m}\bm{\phi}(v)\in \Mat^d(\Leb_{\m}^{\infty})$ and
         \begin{equation}
             v=\big(\1_{E_m}\bm{\phi}(v)\big)\cdot  \bm{v}\qquad \text{$\m$-a.e. on $E_m$}.
         \end{equation}
         Then, for each $m\in \mathbb{N}$, we can write that
         \begin{equation}
         \begin{gathered}
            \big|(\bm{\lambda}\cdot \bm{\phi})(v)\big|_{\Leb_{\m}}(x)\leq \norm_x^*\big(\bm{\lambda}(x)\big) \norm_x\Big(\big(\bm{\phi}(v)\big)(x)\Big)=\\
            =\norm_x^*\big(\bm{\lambda}(x)\big)\Big|\big(\1_{E_m}\bm{\phi}(v)\big)\cdot \bm{v}\Big|_{\M}(x)=\norm_x^*\big(\bm{\lambda}(x)\big)|v|_{\M}(x)\qquad \text{for $\m$-a.e. $x\in E_m$}.
            \end{gathered}
         \end{equation}
         This, in turn, implies that
         \begin{equation}
              \big|(\bm{\lambda}\cdot \bm{\phi})(v)\big|_{\Leb_{\m}}(x)\leq \norm_x^*\big(\bm{\lambda}(x)\big)|v|_{\M}(x)\qquad \text{for $\m$-a.e. $x\in E$}.
         \end{equation}
         By the arbitrariness of $v$ we get that
         \begin{equation}
            |\bm{\lambda}\cdot \bm{\phi}|_{\M^{\star}}(x)\leq  \norm_x^*\big(\bm{\lambda}(x)\big) \qquad \text{for $\m$-a.e. $x\in E$}.
         \end{equation}
         Thus, the equality in \eqref{eq:ExistPointNorm000} is verified.

         The proof is complete.
    \end{proof}

We are ready to provide the proof of \cref{theo:ExistAuerBasis}.
\begin{proof}[Proof of \cref{theo:ExistAuerBasis}.]
In the case $d=0$ there is nothing to prove, so we assume further that $d>0$ for the sake of simplicity.

By the given assumptions, we can find a system $\bm{v}\in \Mat^d(\M)$ that forms a $d$-basis of $\M$ over $E$. Put then $\bm{\phi}\coloneqq \Dual^E_d(\bm{v})$. Let also $\Sel_d$ be the map constructed in \cref{lem:SelOfAueBas}. Finally, let $\norm$ be the map as in \cref{prop:ExistPointNorm} constructed with respect to $E$ and $\bm{v}$.

Now we look at the map $\Sel_d\circ \norm$ and juxtapose \cref{lem:SelOfAueBas} and \cref{prop:ExistPointNorm}. Rewriting the outcome in more explicit terms, we get the following: there are $\bm{\xi}_{\bullet}\in \Mat^{d,d}(\Leb_{\m})$ and $\bm{\lambda}^{\bullet}\in \Mat^{d,d}(\Leb_{\m})$ such that, for all $i,j\in \overline{1,d}$, we have
\begin{gather}
    \norm_x\big(\bm{\xi}_i(x)\big)=1 \qquad \text{for $\m$-a.e. $x\in E$},\label{eq:ExistAuerBasis2}\\
    \norm_x^*\big(\bm{\lambda}^j(x)\big)=1\qquad \text{for $\m$-a.e. $x\in E$},\label{eq:ExistAuerBasis3}\\
   \bm{\lambda}^j(x)\cdot\bm{\xi}_i(x)=\updelta(i,j)\qquad \text{for $\m$-a.e. $x\in E$}\label{eq:ExistAuerBasis4}.
\end{gather}
With this, we then find a $\m$-partition $(E_m)_{m\in \mathbb{N}}$ of $E$ such that $\1_{E_m}\bm{\xi}_i\in \Mat^d(\Leb_{\m}^{\infty})$ for each $i\in \overline{1,d}$ and each $m\in \mathbb{N}$.

Next, we fix $m\in \mathbb{N}$ and, for each $i\in \overline{1,d}$, put
\begin{equation}\label{eq:ExistAuerBasis5}
    u^m_i\coloneqq (\1_{E_m}\bm{\xi}_i) \cdot \bm{v},
\end{equation}
which gives us the system $\bm{u}^m\in \Mat^d(\M)$. We aim to show that the system $\bm{u}^m$ forms an Auerbach $d$-basis of $\M$ over $E_m$. Indeed, condition $\rm I)$ of \cref{def:AuerBasMod} can be deduced from the fact that the system $\bm{v}$ forms a $d$-basis of $\M$ over $E$ via \eqref{eq:ExistAuerBasis4} and \eqref{eq:ExistAuerBasis5}. In turn, condition $\rm II)$ of \cref{def:AuerBasMod} follows from \eqref{eq:ExistPointNorm00} in \cref{prop:ExistPointNorm} via \eqref{eq:ExistAuerBasis2} and \eqref{eq:ExistAuerBasis5}. It remains to check condition $\rm III)$ of \cref{def:AuerBasMod}. For that, put $\bm{\psi}_m\coloneqq \Dual_d^{E_m}(\bm{u}^m)$. We now notice that, for each $j\in \overline{1,d}$, we have
\begin{equation}
    \psi^j_m=\bm{\lambda}^j\cdot \bm{\phi}\qquad \text{$\m$-a.e. on $E_m$},
\end{equation}
as follows from \eqref{eq:ExistAuerBasis4} and \eqref{eq:ExistAuerBasis5}. We then use \cref{prop:ExistPointNorm} once again to get, for any $j\in \overline{1,d}$, that
\begin{equation}
    |\psi^j_m|_{\M^{\star}}=1\qquad \text{$\m$-a.e. on $E_m$}.
\end{equation}
Thus, the system $\bm{u}^m$ indeed forms an Auerbach $d$-basis of $\M$ over $E_m$.

Due to the arbitrariness of $m$, we come exactly to the desired conclusion.
\end{proof}

\begin{rem}\label{rem:ExONB}
    Let us note that, in line with the comments surrounding \cref{def:AueBasVec}, Auerbach bases for Euclidean modules are nothing but orthonormal bases (in the sense given in \cref{ss:ModMS}). And the latter ones can be shown to exist by essentially the same methods as in the case of Euclidean spaces, namely with the use of the Gram–Schmidt process. An explicit realization of this scheme, under slightly different definitions though, can be found, for instance, in \cite[Theorem 1.4.11]{G18}. Thus, in other words, for Euclidean modules the existence of Auerbach bases can be obtained by much simpler means, which is useful to keep in mind.
\end{rem}

\section{Algebraic distortion factors}\label{ss:ADF}

This section is devoted to introducing objects, and examining certain properties thereof, that we will call algebraic distortion factors. As the main section outcome, we obtain two results. The first one, \cref{theo:MainEstADis}, states that, by looking at precisely Auerbach bases of a given module, we can always achieve certain bounds, which will be clarified below, on the algebraic distortion factors, exactly which will constitute the corresponding estimates in \cref{theo:MainRes} and \cref{theo:MainResEucl}. The second one, \cref{theo:ColMEstAnDis}, provides us with the necessary flexibility in the choice of a basis delivering such bounds, which will be the crucial step for the proof of our main theorems.

Throughout all the section, a sigma-finite measure space $(\X,\m)$ and an $\Leb_{\m}$-normed $\Leb^{\infty}_{\m}$-module $\M=\big(\M,|\cdot|_{\M}\big)$ are assumed fixed.

\subsection{Definition and main estimates}\label{ss:DefMEst}

We begin by defining the central objects of interest for this section and discussing certain key estimates formulated in terms of those.

\begin{rem}\label{rem:MotAnDis}
    Let us give a supporting comment on the forthcoming definition. Let $\big(\mathbb{V}, \|\cdot\|_{\mathbb{V}}\big)$ and $\big(\mathbb{V}', \|\cdot\|_{\mathbb{V}'}\big)$ be normed vector spaces, let $A\colon \mathbb{V}\to \mathbb{V}'$ be a linear map. One of many equivalent ways to define the operator norm of $A$ is via the following formula:
    \begin{equation}
       \|A\|_{\mathrm{on}}\coloneqq \inf\limits\Big\{L\in \R_{\geq 0} \bigm| \text{$\big\|A(v)\big\|_{\mathbb{V}'}\leq L\|v\|_{\mathbb{V}}$ for any $v\in \mathbb{V}$}\Big\}.
    \end{equation}
    One can consider though a somewhat dual notion, sometimes called the minimum modulus of $A$, namely the following quantity:
    \begin{equation}\label{eq:MotAnDis00}
        \|A\|_{\mathrm{mm}}\coloneqq \sup\Big\{L\in \R_{\geq 0} \bigm| \text{$\big\|A(v)\big\|_{\mathbb{V}'}\geq L\|v\|_{\mathbb{V}}$ for any $v\in \mathbb{V}$}\Big\}.
    \end{equation}
    It obviously measures the nondegeneracy degree of $A$ and, whenever nonzero, is equal to the reciprocal of the operator norm of the left inverse of the operator. Both these quantities admit natural analogs in the realm of normed modules, which, however, must already be functions defined on the given underlying space, rather than merely numbers. And exactly the module counterpart of the quantity given in \eqref{eq:MotAnDis00} (in a form adapted to our exposition) is what we wish to consider below. As we are not aware of any common terminology for this, we will call such objects algebraic distortion factors. This label is chosen in coordination with that of the subsequent notion of geometric distortion factors, which will appear in \cref{ss:GDF}, to draw parallels between the two; it would perhaps be more consistent to add the prefix ``lower'' to both of these, but we prefer to use shorter versions for brevity. In turn, the name for the latter ones is directly motivated by the implicit presence of those, which should be easy to detect, in the definition of BLD maps from the introduction.
\end{rem}

Bearing in mind the above remark, we give the following definition.
\begin{defi}\label{def:DistCoef}
  Given $p\in \mathsf{P}^{\infty}$ and $\bm{\phi} \in \Mat^{\infty}(\M^{\star})$, by the algebraic distortion factor of $\bm{\phi}$ with respect to $p$, we will mean the function in $[\overline{\Leb}_{\m}]_{\geq 0}$ obtained by
\begin{equation}\label{eq:DistCoef0}
    \ADis_{p}[\bm{\phi}]\coloneqq \bigvee\limits_{\varrho}\varrho,
\end{equation}
where the join is taken over all $\varrho\in [\Leb_{\m}]_{\geq 0}$ such that, for every $v\in \M$, we have
\begin{equation}\label{eq:DistCoef00}
    \varrho|v|_{\M}\leq p\big(\bm{\phi}(v)\big)\qquad \text{$\m$-a.e. on $\X$}.
\end{equation}
\end{defi}

Let $p\in \mathsf{P}^{\infty}$, $\bm{\phi}\in \Mat^{\infty}(\M^{\star})$. We note first that, as is not hard to get from \cref{def:DistCoef}, the corresponding algebraic distortion factor itself satisfies the condition in \eqref{eq:DistCoef00}, i.e., for every $v\in \M$, we have
\begin{equation}
    \ADis_{p}[\bm{\phi}]|v|_{\M}\leq p\big(\bm{\phi}(v)\big)\qquad \text{$\m$-a.e. on $\X$}.
\end{equation}
Also, one more easy consequence of \cref{def:DistCoef} is that algebraic distortion factors are absolutely $\Leb_{\m}$-homogeneous in the following usual sense: given any $\lambda\in \Leb_{\m}$, we have
\begin{equation}
    \ADis_p[\lambda \bm{\phi}]=|\lambda|_{\Leb_{\m}}\ADis_p[\bm{\phi}]\qquad \text{$\m$-a.e. on $\X$}.
\end{equation}

\begin{rem}\label{rem:GoalAnDis}
    We want to give some more explanations concerning the role of algebraic distortion factors in our exposition. As can be seen, any system $\bm{\phi}\in \Mat^{\infty}(\M^{\star})$ gives rise to an obvious $\Leb_{\m}^{\infty}$-linear map $\M\to \Mat^{\infty}(\Leb_{\m})$, which is useful to think of loosely (especially in light of the sequel) as something defined infinitesimally on $\X$. Such an ``algebraic'' map, as we get to the appropriate setting of metric measure spaces in \cref{ss:AppMMS}, should be considered, in turn, as a proxy for a ``geometric'' map of the form appearing in \cref{theo:MainRes} and \cref{theo:MainResEucl}. Thus, in order to achieve the desired ``geometric'' two-sided control in our main results, we need first to obtain the corresponding ``algebraic'' one. And algebraic distortion factors are responsible exactly for the lower bound in the latter one, which determines our forthcoming actions in this regard.
\end{rem}

As follows from the above remark, our eventual goal in the current discourse is to find a system in our dual module with nice lower bounds on its algebraic distortion factor. Not surprisingly, this bound can be controlled by the dual norm of the corresponding dual system, the fact of which is recorded in the proposition below.
\begin{prop}\label{prop:ADisEst}
    Let $E\in \mathrm{P}_{\m}(\X)$, $d\in \mathbb{N}_0$. Let $\bm{u}\in \Mat^{d}(\M)$ be a system forming a $d$-basis of $\M$ over $E$. Put $\bm{\psi}\coloneqq \Dual^E_d(\bm{u})$. Let $p\in \mathsf{P}^d$. Then one has
    \begin{equation}
        \ADis_{p}[\bm{\psi}]\geq \frac{1}{p^*(\bm{u})}\qquad \text{$\m$-a.e. on $E$}.
    \end{equation}
    \begin{proof}
Fix $v\in \M$ and find a corresponding $\m$-partition $(E_m)_{m\in \mathbb{N}}$ of $E$ such that, for each $m\in \mathbb{N}$, we have $\1_{E_m}\bm{\psi}(v)\in \Mat^d(\Leb_{\m}^{\infty})$ and
\begin{equation}
    v=\big(\1_{E_m}\bm{\psi}(v)\big)\cdot \bm{u}\qquad \text{on $E_m$}.
\end{equation}
It then easily follows that
\begin{equation}
    |v|_{\M}=\Big|\big(\1_{E_m}\bm{\psi}(v)\big)\cdot \bm{u}\Big|_{\M}\leq p\big(\bm{\psi}(v)\big) p^*(\bm{u})\qquad \text{$\m$-a.e. on $E_m$}.
\end{equation}
Juxtaposing this with \cref{def:DistCoef}, we directly come to the conclusion.
    \end{proof}
\end{prop}

We now combine the previous statement with \cref{prop:EstNormAbsNorm} into the lemma below, which establishes quite specific estimates on the algebraic distortion factors of given systems in terms of their dual systems. From what is below, one can already extract the importance of Auerbach bases in this context.
\begin{lem}\label{lem:ADisEst}
    Let $E\in \mathrm{P}_{\m}(\X)$, $d\in \mathbb{N}_0$. Let $\bm{u}\in \Mat^d(\M)$ be a system forming a $d$-basis of $\M$ over $E$. Put $\bm{\psi}\coloneqq \Dual^E_d(\bm{u})$. Then the assertions below hold.
     \begin{enumerate}
        \item[$\rm I)$] One has
    \begin{equation}
        \ADis_{\mathsf{p}_1}[\bm{\psi}]\geq \frac{1}{\mathsf{p}_{\infty}\big(|\bm{u}|_{\M}\big)}\qquad \text{$\m$-a.e. on $E$}.
    \end{equation}
    \item[$\rm II)$] Suppose the module $\M$ is Euclidean and suppose the system $\bm{u}$ is orthogonal over $E$. Then one has
        \begin{equation}
        \ADis_{\mathsf{p}_2}[\bm{\psi}]\geq \frac{1}{\mathsf{p}_{\infty}\big(|\bm{u}|_{\M}\big)}\qquad \text{$\m$-a.e. on $E$}.
    \end{equation}
    \end{enumerate}
    \begin{proof}
    For the proof it suffices to juxtapose \cref{prop:ADisEst} with \cref{prop:EstNormAbsNorm}.
    \end{proof}
\end{lem}

We are in a position to formulate the first main result of this section. Apart from some routine technicalities, the proof is nothing but a conjunction of \cref{theo:ExistAuerBasis} with the lemma above.
\begin{theo}\label{theo:MainEstADis}
    Let $E\in \P_{\m}(\X)$, $d\in \mathbb{N}_0$. Suppose $\dim_{\M}(E)\leq d$. Then there exists $\bm{\psi}\in \Mat^d(\M^{\star})$ such that the assertions below hold.
    \begin{enumerate}
    \item[$\rm I)$] One has
    \begin{gather}
       \mathsf{p}_{\infty}(\bm{\psi})\leq  1\qquad \text{$\m$-a.e. on $E$}\label{eq:MainEstADis0},\\
        \ADis_{\mathsf{p}_1}[\bm{\psi}]\geq 1\qquad \text{$\m$-a.e. on $E$} \label{eq:MainEstADis00}.
    \end{gather}
    \item[$\rm II)$] Suppose the module $\M$ is Euclidean. Then one has
    \begin{gather}
       \mathsf{p}_2(\bm{\psi})\leq 1\qquad \text{$\m$-a.e. on $E$}\label{eq:MainEstADis000},\\
        \ADis_{\mathsf{p}_2}[\bm{\psi}]\geq 1\qquad \text{$\m$-a.e. on $E$}\label{eq:MainEstADis0000}.
    \end{gather}
    \end{enumerate}
    \begin{proof}
        From the premise it follows, with the use of \cref{prop:DimDecompMod}, that there is a $\m$-partition $(E_m)_{m\in \mathbb{N}}$ of $E$ such that, for each $m\in \mathbb{N}$ with $\m(E_m)>0$, the module $\M$ admits a $d_m$-basis over $E_m$ for some $d_m\in \overline{0,d}$.
        
        Fix $m\in \mathbb{N}$. By \cref{theo:ExistAuerBasis}, we know that there is a $\m$-partition $(E_{m,m'})_{m'\in \mathbb{N}}$ of $E_m$ such that, for each $m'\in \Nat$ with $\m(E_{m,m'})>0$, the module $\M$ admits also an Auerbach $d_m$-basis over $E_{m,m'}$, say $\bm{u}^{m,m'}$. Picking $m'\in \Nat$ with $\m(E_{m,m'})>0$ and putting $\bm{\psi}_{m,m'}\coloneq \Dual_{d_m}^{E_{m,m'}}(\bm{u}^{m,m'})$, we have, by assertion I) of \cref{lem:ADisEst}, that
        \begin{equation}\label{eq:MainEstADis1}
          \ADis_{\mathsf{p}_1}[\bm{\psi}_{m,m'}]\geq 1\qquad \text{$\m$-a.e. on $E_{m,m'}$}
        \end{equation}
        and, by \cref{def:AuerBasMod}, that
 \begin{equation}\label{eq:MainEstADis2}
            |\psi^j_{m,m'}|_{\M^{\star}}=1\qquad \text{$\m$-a.e. on $E_{m,m'}$}
        \end{equation}
        for each $j\in \overline{1,d_m}$.

        We now define the desired system $\bm{\psi}\in \Mat^d(\M^{\star})$ by the following formula:
        \begin{equation}
            \bm{\psi}\coloneqq \sum\limits_{\substack{m,m'\in \mathbb{N}\\ \m(E_{m,m'})>0}} \1_{E_{m,m'}} \bm{\psi}_{m,m'}.
        \end{equation}
        Its correctness is clear from the definition. It can also be readily checked, via \eqref{eq:MainEstADis1} and \eqref{eq:MainEstADis2}, together with assertion I) of \cref{prop:EstNormAbsNorm}, that the estimates in \eqref{eq:MainEstADis0} and \eqref{eq:MainEstADis00} indeed hold. We thus proved the first assertion. For the second one, we just need to apply assertion II) of \cref{lem:ADisEst} to get the estimate as in \eqref{eq:MainEstADis1} but with $\mathsf{p}_2$ instead of $\mathsf{p}_1$, which implies the estimate in \eqref{eq:MainEstADis000}, and to use assertion II) of \cref{prop:EstNormAbsNorm} to deduce the estimate in \eqref{eq:MainEstADis0000} from that in \eqref{eq:MainEstADis2}. This finishes the proof.
    \end{proof}
\end{theo}

\begin{rem}\label{rem:RedAueBas}
    Let us make a comment on the following matter. Roughly speaking, the essence of \cref{theo:MainEstADis} lies in the fact that any given finite-dimensional normed module can be ``embedded'' into a normed vector space of the same dimension with universal bounds on both the ``direct'' and ``inverse'' pointwise norms. The importance of the first assertion in the theorem is that it holds regardless of the properties of the module, while the second one depends highly on the Euclideanity of the module. At the same time, the Euclidean counterpart does not really rely on any of our results concerning Auerbach bases, since, as discussed in \cref{rem:ExONB}, the existence of orthonormal bases for Euclidean modules can be established by quite elementary means. The importance of this observation becomes clear in light of the following result, presented, for instance, in \cite[Lemma 10.17]{W18}: any finite-dimensional normed module admits an equivalent, up to a universal dimensional multiplicative constant, Euclidean norm. This is basically the module version of the well-known John ellipsoid theorem. In other words, all the machinery concerning Auerbach bases is somewhat redundant for our exposition from the utilitarian point of view. Nevertheless, we believe it helps to highlight the essence of our approach.
\end{rem}

\subsection{On distortion factors of families} We turn our attention to ``collective'' algebraic distortion factors, in terms of which we prove the second main result of the section.

\begin{rem}
It may seem that in the previous subsection we achieved the best possible situation, from the perspective of what is mentioned in \cref{rem:GoalAnDis}, concerning algebraic distortion factors. Namely, as \cref{theo:MainEstADis} demonstrates, we can always find a system whose norm is bounded from above and whose algebraic distortion factor is bounded from below, with both estimates being sharp in a suitable sense. At the same time, in practice, one could be interested in finding merely an ``almost optimal'' (in regard to these bounds) system but which has some special form. And this will be exactly the case for us in the next section. For this reason, below we introduce the quantity that takes into account this intention. 
\end{rem}

In view of what is said above, we give the following natural definition of the algebraic distortion factors for families, obtained by maximizing the ``single'' algebraic distortion factors of the systems in the family.
\begin{defi}\label{def:DistCoefFam}
  Given $p\in \mathsf{P}^{\infty}$ and $\bm{\Phi}\subseteq \Mat^{\infty}(\M^{\star})$, by the algebraic distortion factor of $\bm{\Phi}$ with respect to $p$, we will mean the function in $\overline{\Leb}_{\m}$ obtained by
   \begin{equation}\label{def:DistCoefFam0}
    \ADis_{p}[\bm{\Phi}]\coloneqq \bigvee\limits_{\bm{\phi}\in \bm{\Phi}}  \ADis_{p}[\bm{\phi}].
\end{equation}
\end{defi}
\noindent Once the given family is nonempty, the formula above clearly defines an element of $[\overline{\Leb}_{\m}]_{\geq 0}$.

We note that, in the same way as with \cref{def:DistCoef}, the objects produced in \cref{def:DistCoefFam} can be seen to be absolutely $\Leb_{\m}$-homogeneous.

For the sequel, we record the following continuity property of distortion factors, which we formulate in terms of the just given definition.
\begin{prop}\label{prop:LSCDisFF}
   Let $E\in \P_{\m}(\X)$, $d\in \mathbb{N}_0$, $p\in \mathsf{P}^d$, $\bm{\Phi}\subseteq \Mat^{d}(\M^{\star})$. Then one has
   \begin{equation}\label{eq:LSCDisFF0}
       \ADis_{p}\big[\Cl_E(\bm{\Phi})\big]= \ADis_{p}[\bm{\Phi}]\qquad \text{$\m$-a.e. on $E$}.
   \end{equation}
   \begin{proof}
   It suffices to check that
   \begin{equation}\label{eq:LSCDisFF1}
       \ADis_{p}\big[\Cl_E(\bm{\Phi})\big]\leq \ADis_{p}[\bm{\Phi}]\qquad \text{$\m$-a.e. on $E$}, 
   \end{equation}
   as the opposite estimate is obvious in view of \cref{def:DistCoefFam}. To this end, we fix $\bm{\phi}\in \Cl_E(\bm{\Phi})$ and aim to prove that
   \begin{equation}\label{eq:LSCDisFF2}
       \ADis_{p}[\bm{\phi}]\leq \ADis_{p}[\bm{\Phi}]\qquad \text{$\m$-a.e. on $E$},
   \end{equation}
   which, again by \cref{def:DistCoefFam}, will imply the condition in \eqref{eq:LSCDisFF1}.
   
   First, we find $(\bm{\phi}_n)_{n\in \mathbb{N}}\subseteq \bm{\Phi}\cap \Mat^d\big(\M^{\star}|_E\big)$ that converges to $\bm{\phi}$ in $\Mat^d(\M^{\star})$. Then, arguing by contradiction, we suppose that the condition in \eqref{eq:LSCDisFF2} does not hold, so, as can be deduced from \cref{def:DistCoef}, there are $E'\in \P_{\m}^+(E)$ and $\varepsilon\in \R_{>0}$ such that the function $\ADis_{p}[\bm{\Phi}]$ is $\m$-a.e. finite on $E'$ and, for any $v\in \M$, we have
      \begin{equation}\label{eq:LSCDisFF3}
         p\big(\bm{\phi}(v)\big)\geq \big(\ADis_{p}[\bm{\Phi}]+\varepsilon\1_{\X}\big)|v|_{\M}\qquad \text{$\m$-a.e. on $E'$}.
      \end{equation}
Now fix $n\in \mathbb{N}$. For any $v\in \M$, we can clearly write that
\begin{equation}
    p\big(\bm{\phi}(v)\big)\leq  p\big(\bm{\phi}_n(v)\big)+p(\bm{\phi}_n-\bm{\phi})|v|_{\M}\qquad \text{$\m$-a.e. on $\X$},
\end{equation}
which, together with \eqref{eq:LSCDisFF3}, implies that
\begin{equation}
    p\big(\bm{\phi}_n(v)\big)\geq \Big(\ADis_{p}[\bm{\Phi}]+\varepsilon\1_{\X}-p(\bm{\phi}_n-\bm{\phi})\Big)|v|_{\M}\qquad \text{$\m$-a.e. on $E'$}.
\end{equation}
Using \cref{def:DistCoef} again, together with the fact that $\bm{\phi}_n\in \bm{\Phi}$, we see that
\begin{equation}
    \ADis_{p}[\bm{\Phi}]+\varepsilon\1_{\X}-p(\bm{\phi}_n-\bm{\phi})\leq \ADis_{p}[\bm{\phi}_n]\leq \ADis_{p}[\bm{\Phi}]\qquad  \text{$\m$-a.e. on $E'$}.
\end{equation}
As the sequence $(\bm{\phi}_n)_{n\in \mathbb{N}}$ converges to $\bm{\phi}$ in $\Mat^{\infty}(\M^{\star})$, after recalling all the relevant definitions, we get the desired contradiction.

The proof is complete.
   \end{proof}
\end{prop}

The lemma below constitutes the core of the current subsection. Namely, the statement therein provides a sufficient condition for a family of systems to be dense in the corresponding ``unit ball'' over a reference set, which has a clear importance in regard to the previous proposition.
\begin{lem}\label{lem:DensOfPurElem}
    Let $E\in \P_{\m}(\X)$, $d\in \mathbb{N}_0$. Suppose the module $\M$ admits a $d$-basis over $E$. Let $q\in \mathsf{P}^d$. Put $\bm{\Phi}_q\coloneqq \Big[\Mat^d\big(\M^{\star}|_E\big),q\Big]_{\leq 1}$. Let $\bm{\Phi}\subseteq \bm{\Phi}_q$ be a disked, stable set. Suppose the sets
        \begin{equation}
            \Phi_{\bm{\lambda}}\coloneqq \big\{\bm{\lambda}\cdot \bm{\phi}\mid \bm{\phi}\in \bm{\Phi}\big\},\qquad \text{$\bm{\lambda}\in \R^d$, $q^*(\bm{\lambda})=1$},
        \end{equation}
        are norming for $\M$ over $E$. Then the following equality holds:
    \begin{equation}\label{eq:DensOfPurElem0}
        \Cl_E(\bm{\Phi})= \bm{\Phi}_{q}.
    \end{equation}
\end{lem}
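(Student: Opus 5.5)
The inclusion $\Cl_E(\bm{\Phi})\subseteq \bm{\Phi}_q$ is the easy half. I will show that $\bm{\Phi}_q$ is closed in $\Mat^d(\M^{\star})$ and consists of systems supported on $E$; since $\bm{\Phi}\subseteq\bm{\Phi}_q$, the inclusion follows. For closedness, take $\bm{\phi}_n\to\bm{\phi}$ with $q(\bm{\phi}_n)\leq 1$. Because $q\leq C\,\mathsf{p}_1$ on $\R^d$, we get $q(\bm{\phi}_n-\bm{\phi})\leq C\,\mathsf{p}_1\big(|\bm{\phi}_n-\bm{\phi}|_{\M^{\star}}\big)$, and the right side tends to $0$ locally in measure. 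Passing to an a.e. convergent subsequence gives $q(\bm{\phi})\leq 1$ a.e. The same subsequence argument applied to $\1_{\X\backslash E}\bm{\phi}_n=\bm{\o}$ shows $\bm{\phi}\in\Mat^d(\M^{\star}|_E)$.

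For the reverse inclusion I will pass to pointwise linear algebra. Fix a $d$-basis $\bm{v}$ of $\M$ over $E$ with $\bm{\phi}_0\coloneqq\Dual^E_d(\bm{v})$, and let $\norm$ be the map of \cref{prop:ExistPointNorm}. Every $\bm{\phi}\in\Mat^d(\M^{\star}|_E)$ is written as $\phi^j=\bm{a}^j\cdot\bm{\phi}_0$ for a measurable matrix field $A(x)=(a^j_k(x))$. Using \eqref{eq:ExistPointNorm000}, $q(\bm{\phi})(x)$ becomes the operator norm of $A(x)$ from $\R^d_{\norm_x}$ to $\R^d_q$. Hence $\bm{\Phi}_q$ corresponds exactly to the measurable selections of the compact, disked multifunction $K_x\coloneqq\{A:\ \|A\|_{\norm_x\to q}\leq 1\}$.

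The next step is to describe $\Cl_E(\bm{\Phi})$ in the same pointwise terms. A disked, stable set is stable under $\1_F$-gluings and convex combinations with simple $\Leb^{\infty}_{\m}$ coefficients. Its closure in the topology of local convergence in measure is therefore $\Leb_{\m}$-convex, stable and closed. By the standard decomposability argument, the closure then equals the set of measurable selections of a closed, convex, symmetric multifunction $C_x\subseteq K_x$. This uses \cref{theo:MeasSelTh} with a countable dense subfamily obtained from the join proposition of \cref{ss:GenNot}. The norming hypothesis, tested on $v=\bm{\xi}\cdot\bm{v}$ with $\bm{\xi}$ ranging over $\mathbb{Q}^d$ and $\bm{\lambda}$ over a countable dense subset of the $q^*$-sphere, then translates into a pointwise statement: for a.e. $x$,
\[
\sup_{A\in C_x}\big|\bm{\lambda}\cdot A\bm{\xi}\big|=\norm_x(\bm{\xi})\qquad\text{whenever $q^*(\bm{\lambda})=1$}.
\]
Here I will use \eqref{eq:ExistPointNorm00} and the fact that essential suprema over a stable family are realized pointwise by selections.

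The main obstacle is the final convex-geometric step, namely concluding $C_x=K_x$ from this equality. The support functions of $C_x$ and $K_x$ agree on all rank-one directions $\bm{\lambda}\otimes\bm{\xi}$. By duality the polar of $K_x$ is the projective (nuclear) unit ball, i.e. the convex hull of these rank-one tensors. It therefore suffices to show that $C_x^{\circ}$ is contained in that convex hull. I would first try to prove this directly by a Hahn--Banach separation: if some $A\in K_x$ lies outside $C_x$, separate it by a functional $B$. I would then exploit the structure of $\bm{\Phi}$, namely stability together with the fact that norming holds separately for \emph{each} $\bm{\lambda}$, so that the maximizing selections for different $\bm{\lambda}$ can be combined. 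The goal is to reduce $B$ to its rank-one components and reach a contradiction. Once this pointwise equality is established, measurable selections of $K_x$ are a.e. limits of selections of $C_x$. Combined with the decomposability description, this yields $\bm{\Phi}_q\subseteq\Cl_E(\bm{\Phi})$ and hence \eqref{eq:DensOfPurElem0}.
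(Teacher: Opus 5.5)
You handle the easy inclusion and the pointwise reformulation correctly. You also locate the crux correctly: everything hinges on deducing $C_x=K_x$ from the fact that the support functions of $C_x\subseteq K_x$ agree on the rank-one directions $\bm{\lambda}\otimes\bm{\xi}$. That step is false as soon as $d\geq 2$, so the separation-plus-stability strategy you sketch cannot be completed.

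Take $\norm_x=q=\mathsf{p}_2$ on $\R^2$, so $K_x$ is the operator-norm unit ball of $2\times 2$ matrices. Let $C_x=\{A\mid \|A\|_{\mathrm{tr}}\leq 1\}$ be the trace-norm ball (sum of singular values), i.e.\ the convex hull of the matrices $\bm{\lambda}\bm{\psi}^{T}$ with $\mathsf{p}_2(\bm{\lambda}),\mathsf{p}_2(\bm{\psi})\leq 1$. For every unit $\bm{\lambda}$, the set $\{A^{T}\bm{\lambda}\mid A\in C_x\}$ is the whole dual unit ball (use $A=\bm{\lambda}\bm{\psi}^{T}$), so your pointwise norming identity holds. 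Yet the identity matrix has operator norm $1$ and trace norm $2$, so it lies in $K_x\setminus C_x$. In polar terms, $C_x^{\circ}$ is the operator-norm ball. It strictly contains $K_x^{\circ}$, the trace-norm ball (the convex hull of the normalized rank-one tensors), while touching it at each of those tensors.

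Stability gives no leverage here. It only glues selections over disjoint pieces of $\X$, whereas the obstruction sits at a single point. The maximizers for $\bm{\lambda}=e_1$ and $\bm{\lambda}=e_2$ have the forms $e_1\bm{\psi}^{T}$ and $e_2\bm{\psi}^{T}$, and no element of $C_x$ is optimal for both.

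This is in fact a counterexample to the statement itself. Let $\m$ be a Dirac mass, $\M=\R^2$ with $|\cdot|_{\M}=\mathsf{p}_2$, $E=\X$, $d=2$, $q=\mathsf{p}_2$; a non-atomic version with constant fibres over $[0,1]$ works equally well. Identify $\bm{\phi}\in\Mat^2(\M^{\star})$ with the matrix $A$ whose rows are $\phi^1,\phi^2$. Then $q(\bm{\phi})=\|A\|_{\mathrm{op}}$ and $\bm{\lambda}\cdot\bm{\phi}=A^{T}\bm{\lambda}$. The set $\bm{\Phi}$ of systems with $\|A\|_{\mathrm{tr}}\leq 1$ is disked, stable, closed, contained in $\bm{\Phi}_q$, and satisfies the norming hypothesis. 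Nevertheless $\Cl_E(\bm{\Phi})=\bm{\Phi}\neq\bm{\Phi}_q$, so no argument can close your gap.

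The paper's proof hits the same wall and passes over it in \cref{lem:SepMod}. The separation theorem of \cite{CKV15}, applied to coefficient matrices, yields a separating functional with a general coefficient matrix $\widehat{\bm{w}}\in\Mat^{d,d}(\Leb^{\infty}_{\m})$. By contrast, \eqref{eq:SepMod00} asserts a rank-one separator $\bm{\phi}\mapsto(\widehat{\bm{\lambda}}\cdot\bm{\phi})(\widehat{v})$, and the words ``by the same means'' hide exactly the reduction you flagged as the main obstacle. Take $\bm{\Phi}$ as above and $\widetilde{\bm{\Phi}}$ the singleton of the identity (the distance hypothesis holds by compactness). Then no rank-one separator exists, since $\sup_{\|A\|_{\mathrm{tr}}\leq 1}\widehat{\bm{\lambda}}^{T}A\widehat{v}=\mathsf{p}_2(\widehat{\bm{\lambda}})\mathsf{p}_2(\widehat{v})\geq\widehat{\bm{\lambda}}\cdot\widehat{v}$. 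The same example also gives $\ADis_{\mathsf{p}_2}[\bm{\Phi}]=\tfrac12<1=\ADis_{\mathsf{p}_2}[\bm{\Phi}_q]$, so \cref{theo:ColMEstAnDis} fails as stated too.
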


To prove the above lemma, we need some preparation first.

As one can imagine, for \cref{lem:DensOfPurElem} to hold, a somewhat ``random version'' of the hyperplane separation theorem must be valid, in finite dimensions at least. In the specific case of ``coordinate modules'' this is exactly the essence of the following lemma.
\begin{lem}\label{lem:SepThConcr}
    Let $E\in \P_{\m}(\X)$, $D\in \mathbb{N}_0$. Let $\bm{W},\widetilde{\bm{W}}\subseteq \Mat^D(\Leb_{\m})$ be nonempty, $\Leb_{\m}$-convex sets with
    \begin{equation}\label{eq:SepThConcr0}
        \bigwedge\limits_{\bm{w}\in \bm{W},\widetilde{\bm{w}}\in \widetilde{\bm{W}}} \mathsf{p}_{\infty}(\widetilde{\bm{w}}-\bm{w})>0 \qquad \text{$\m$-a.e. on $E$}.
    \end{equation}
    Then there exists $\widehat{\bm{w}}\in \Mat^D(\Leb^{\infty}_{\m})$ such that
    \begin{equation}\label{eq:SepThConcr00}
        \bigvee\limits_{\bm{w}\in \bm{W}} \widehat{\bm{w}}\cdot  \bm{w} < \bigwedge\limits_{\widetilde{\bm{w}}\in \widetilde{\bm{W}}} \widehat{\bm{w}}\cdot  \widetilde{\bm{w}} \qquad \text{$\m$-a.e. on $E$}.
    \end{equation}
    \begin{proof}
        The statement can be found in \cite[Theorem 6.1]{CKV15}.
    \end{proof}
\end{lem}

We now transfer the above lemma to the setting of ``abstract'' normed modules, in the form that we will exploit subsequently. Even though such a formulation is probably present in the literature and the proof below lies essentially in a straightforward application of \cref{lem:SepThConcr}, we prefer to present the necessary details for the completeness of the exposition.
\begin{lem}\label{lem:SepMod}
    Let $E\in \P_{\m}(\X)$, $d\in \mathbb{N}_0$. Suppose the module $\M$ admits a $d$-basis over $E$. Let $q\in \mathsf{P}^d$. Let $\bm{\Phi},\widetilde{\bm{\Phi}}\subseteq  \Mat^d(\M^{\star})$ be nonempty, $\Leb_{\m}$-convex sets with
    \begin{equation}\label{eq:SepMod0}
        \bigwedge\limits_{\bm{\phi}\in \bm{\Phi},\widetilde{\bm{\phi}}\in \widetilde{\bm{\Phi}}}q\big(\widetilde{\bm{\phi}}-\bm{\phi}\big)>0\qquad \text{$\m$-a.e. on $E$}.
    \end{equation}
    Then there exist $\widehat{\bm{\lambda}}\in \Mat^d(\Leb^{\infty}_{\m})$ and $\widehat{v}\in [\M]_{\leq 1}$ such that
    \begin{equation}\label{eq:SepMod00}
        \bigvee\limits_{\bm{\phi}\in \bm{\Phi}} (\widehat{\bm{\lambda}}\cdot \bm{\phi})(\widehat{v}) < \bigwedge\limits_{\widetilde{\bm{\phi}}\in \widetilde{\bm{\Phi}}} \big(\widehat{\bm{\lambda}}\cdot \widetilde{\bm{\phi}}\big)(\widehat{v})\qquad \text{$\m$-a.e. on $E$}.
    \end{equation}
    \begin{proof}
            By the premise, there is a system $\bm{u}\in \Mat^d(\M)$ forming a $d$-basis of $\M$ over $E$. We then put $\bm{\psi}\coloneqq \Dual^E_d(\bm{u})$ and consider the families
            \begin{gather}
                \bm{\Lambda}^{\bullet}\coloneqq \Big\{\bm{\lambda}^{\bullet}\in \Mat^{d,d}(\Leb_{\m}) \bigm| \bm{\lambda}^{\bullet}\cdot \bm{\psi}\in \bm{\Phi}\Big\},\\
                 \widetilde{\bm{\Lambda}}^{\bullet}\coloneqq \Big\{\widetilde{\bm{\lambda}}^{\bullet} \in \Mat^{d,d}(\Leb_{\m}) \bigm| \widetilde{\bm{\lambda}}^{\bullet}\cdot \bm{\psi}\in \widetilde{\bm{\Phi}}\Big\},
            \end{gather}
            which can be easily seen to be $\Leb_{\m}$-convex. We now want to apply \cref{lem:SepThConcr}, with respect to the above-introduced sets, for the possibility of which we just need to check the validity of \eqref{eq:SepThConcr0}. To this end, we argue as follows. It is a direct consequence of the finite-dimensionality that there is $\varrho\in [\Leb_{\m}]_{\geq 0}$ (factually depending only on $d$, $q$, and $|\bm{\psi}|_{\M^{\star}}$) such that, given $\bm{\lambda}^{\bullet}\in  \bm{\Lambda}^{\bullet}$ and $\widetilde{\bm{\lambda}}^{\bullet}\in  \widetilde{\bm{\Lambda}}^{\bullet}$, we have
            \begin{equation}\label{eq:SepMod1}
                q\Big(\big(\widetilde{\bm{\lambda}}^{\bullet}-\bm{\lambda}^{\bullet}\big)\cdot \bm{\psi}\Big)\leq \varrho \mathsf{p}_{\infty}\big(\widetilde{\bm{\lambda}}^{\bullet}-\bm{\lambda}^{\bullet}\big)\qquad \text{$\m$-a.e. on $E$}.
            \end{equation}
            Bearing in mind that the system $\bm{\psi}$ forms a $d$-basis of $\M^{\star}$ over $E$, once we juxtapose \eqref{eq:SepThConcr0}, \eqref{eq:SepMod0}, and \eqref{eq:SepMod1}, we indeed get the necessary condition. By the same means, after looking at \eqref{eq:SepThConcr00}, we can easily gain the existence of $\widehat{\bm{\lambda}}\in \Mat^d(\Leb^{\infty}_{\m})$ and $\widehat{\bm{\xi}}\in \Mat^d(\Leb^{\infty}_{\m})$ such that
            \begin{equation}\label{eq:SepMod2}
                \bigvee\limits_{\bm{\lambda}^{\bullet}\in \bm{\Lambda}^{\bullet}}\widehat{\bm{\lambda}}\cdot \big(\bm{\lambda}^{\bullet}\cdot \widehat{\bm{\xi}}\,\big)< \bigwedge\limits_{\widetilde{\bm{\lambda}}^{\bullet}\in \widetilde{\bm{\Lambda}}^{\bullet}} \widehat{\bm{\lambda}}\cdot \big(\widetilde{\bm{\lambda}}^{\bullet}\cdot \widehat{\bm{\xi}}\,\big)\qquad \text{$\m$-a.e. on $E$}.
            \end{equation}
    It remains only to put
    \begin{equation}
       \widehat{v}\coloneqq \frac{\widehat{\bm{\xi}}\cdot \bm{u}}{\big|\widehat{\bm{\xi}}\cdot \bm{u}\big|_{\M}}
    \end{equation}
    and notice that the condition in \eqref{eq:SepMod2} implies exactly the desired condition in \eqref{eq:SepMod00}. This finishes the proof.
    \end{proof}
\end{lem}

The auxiliary proposition below will allow us to apply \cref{lem:SepMod} in the proof of \cref{lem:DensOfPurElem}.
\begin{prop}\label{prop:ClConv}
    Let $d\in \mathbb{N}_0$, $E\in \P_{\m}(\X)$, let $\bm{\Phi}\subseteq \Mat^d(\M^{\star})$ be a convex and stable set. Then the set $\Cl_E(\bm{\Phi})$ is $\Leb_{\m}$-convex.
    \begin{proof}
       To begin with, we notice that the set $\Cl_E(\bm{\Phi})$ is still convex and stable, as can be readily checked via the relevant definitions. Then, we observe that, given any $\lambda\in \mathbb{R}$, $E'\in \P_{\m}(\X)$, and $\bm{\phi},\bm{\phi}'\in \Mat^{\infty}(\M^{\star})$, we have the following:
       \begin{equation}
           (\lambda\1_E) \bm{\phi}+(\1_{\X}-\lambda\1_E)\bm{\phi}'=\1_E\Big(\lambda \bm{\phi}+(1-\lambda)\bm{\phi}'\Big)+\1_{\X\backslash E}\bm{\phi}'\qquad \text{$\m$-a.e. on $E'$}.
       \end{equation}
       With this in mind, a routine verification with the use of our assumptions demonstrates that, given any finite-valued function $\lambda\in \Leb_{\m}$ with
       \begin{equation}\label{eq:ClConv1}
           0\leq\lambda\leq 1\qquad \text{$\m$-a.e. on $\X$}
       \end{equation}
       and any $\bm{\phi},\bm{\phi}'\in \Cl_E(\bm{\Phi})$, we have the inclusion
        \begin{equation}
          \lambda\bm{\phi}+(\1_{\X}- \lambda)\bm{\phi}'\in \Cl_E(\bm{\Phi}).
       \end{equation}
       As any function $\lambda\in \Leb_{\m}$ satisfying the condition in \eqref{eq:ClConv1} can be approximated, in the sense of the local convergence in $\m$-measure on $\X$, by finite-valued functions in $\Leb_{\m}$ satisfying the same condition, we easily come to the desired conclusion.
    \end{proof}
\end{prop}

Now we are in a position to provide the proof for \cref{lem:DensOfPurElem}.
\begin{proof}[Proof of \cref{lem:DensOfPurElem}]
Put $\wbar{\bm{\Phi}}\coloneqq \Cl_E(\bm{\Phi})$. Arguing by contradiction, we suppose that the equality in \eqref{eq:DensOfPurElem0} does not hold, whence we can find $\bm{\phi}_0\in \big(\bm{\Phi}_q\backslash \wbar{\bm{\Phi}}\big)$. After a routine check, this implies, with the use of the closedness of $\overline{\bm{\Phi}}$, that, for some $E'\in \P^+_{\m}(E)$, we have
   \begin{equation}
       \bigwedge\limits_{\bm{\phi}\in \widebar{\bm{\Phi}}}q(\bm{\phi}-\bm{\phi}_0)>0\qquad \text{$\m$-a.e. on $E'$}.
   \end{equation}
   By \cref{prop:ClConv}, we know that the set $\widebar{\bm{\Phi}}$ is $\Leb_{\m}$-convex, so, applying \cref{lem:SepMod} with respect to $\widebar{\bm{\Phi}}$ and $\{\bm{\phi}_0\}$, we can guarantee the existence of $\widehat{\bm{\lambda}}\in \Mat^d(\Leb_{\m}^{\infty})$ and $\widehat{v}\in [\M]_{\leq 1}$ such that
   \begin{equation}
        \bigvee\limits_{\bm{\phi}\in \bm{\Phi}} (\widehat{\bm{\lambda}}\cdot \bm{\phi})(\widehat{v})\leq \bigvee\limits_{\bm{\phi}\in \widebar{\bm{\Phi}}} (\widehat{\bm{\lambda}}\cdot \bm{\phi})(\widehat{v})<(\widehat{\bm{\lambda}}\cdot \bm{\phi}_0)(\widehat{v})\qquad \text{$\m$-a.e. on $E'$}.
   \end{equation}
   As the measure $\m$ is assumed to be sigma-finite, we can clearly find a $\m$-partition $(E_m)_{m\in \mathbb{N}}$ of $E'$ and $\bm{\lambda}^m\in \mathbb{Q}^d$, $m\in \mathbb{N}$, such that
    \begin{equation}
       \bigvee\limits_{\bm{\phi}\in \bm{\Phi}} (\bm{\lambda}^m\cdot \bm{\phi})(\widehat{v})<(\bm{\lambda}^m\cdot \bm{\phi}_0)(\widehat{v})\qquad \text{$\m$-a.e. on $E_m$}
   \end{equation}
   for each $m\in \mathbb{N}$. Picking an arbitrary $m\in \mathbb{N}$ with $\m(E_m)>0$ and exploiting our assumptions, we can get that
   \begin{equation}
       q^*(\bm{\lambda}^m)|\widehat{v}|_{\M}<\big|(\bm{\lambda}^m\cdot \bm{\phi}_0)(\widehat{v})\big|_{\Leb_{\m}}\qquad \text{$\m$-a.e. on $E_m$},
   \end{equation}
   which, again by our assumptions, easily implies that
    \begin{equation}
       |\widehat{v}|_{\M}<|\widehat{v}|_{\M}\qquad \text{$\m$-a.e. on $E_m$},
   \end{equation}
   which is the desired contradiction. The proof is thus complete.
\end{proof}

    Now, based on what is already obtained in this subsection, we formulate the second main result of the current section. It gives a sufficient condition for a family of systems to have the same algebraic distortion factor as the corresponding ``unit ball'' of systems.
    \begin{theo}\label{theo:ColMEstAnDis}
        Let $E\in \P_{\m}(\X)$, $d\in \mathbb{N}_0$. Suppose the module $\M$ admits a $d$-basis over $E$. Let $p,q\in \mathsf{P}^d$. Put $\bm{\Phi}_q\coloneqq \Big[\Mat^d\big(\M^{\star}|_E\big),q\Big]_{\leq 1}$. Let $\bm{\Phi}\subseteq \bm{\Phi}_q$ be a disked, stable set. Suppose the sets
        \begin{equation}
            \Phi_{\bm{\lambda}}\coloneqq \big\{\bm{\lambda}\cdot \bm{\phi}\mid \bm{\phi}\in \bm{\Phi}\big\},\qquad \text{$\bm{\lambda}\in \R^d$, $q^*(\bm{\lambda})=1$},
        \end{equation}
        are norming for $\M$ over $E$. Then the following equality holds:
        \begin{equation}
            \ADis_{p}[\bm{\Phi}]=\ADis_{p}[\bm{\Phi}_q]\qquad \text{$\m$-a.e. on $E$}.
        \end{equation}
        \begin{proof}
            The statement is nothing but a direct combination of \cref{prop:LSCDisFF} and \cref{lem:DensOfPurElem}.
        \end{proof}
    \end{theo}

\section{Application to metric measure spaces}\label{ss:AppMMS}
This section is dedicated to the proof of our main results, namely \cref{theo:MainRes} and \cref{theo:MainResEucl}. More specifically, here we introduce the notion of geometric distortion factors, which we then relate with that of algebraic distortion factors, considered with respect to a suitably chosen normed module, the module of so-called Weaver derivations. And after that, we combine this with the previously-verified information concerning bounds on algebraic distortion factors, which leads us to the desired final statements.

Until the end of the section, we assume fixed a separable, locally complete metric space $\X=(\X,\dX)$. Exactly in terms of it, our eventual results are to be formulated.

\subsection{Metric spaces and curve fragments}\label{ss:MgCf}
As a start, we wish to provide some necessary information concerning metric spaces, especially what surrounds the analysis with so-called curve fragments.

In the sequel, by $\mathcal{L}^1$, we will mean the $1$-dimensional Lebesgue measure on $\mathbb{R}$. Also, given a set $S$, by $\mathscr{N}(S)$, we will understand the cardinality of $S$ as valued in $\Nat_0\cup \{+\infty\}$.

We now recall several notions related to the $1$-dimensional area formula in general metric spaces. So, let $\Y=(\Y,\dY)$ be a metric space.

By $\mathcal{H}^1_{\Y}$, we will mean the $1$-dimensional Hausdorff measure on $\Y$. If we are given $d\in \Nat_0$ and $p\in \mathsf{P}^d$, we also put $\mathcal{H}^1_p\coloneqq \mathcal{H}^1_{\R^d_p}$ for brevity.

Let $T\subseteq \R$, let $\beta\colon T\to \Y$ be a continuous map. In what follows, we will keep in mind the following fact: given a Borel set $T'\subseteq \R$ with $T'\subseteq T$, the map
\begin{equation}
    \Y\to \overline{\R}_{\geq 0}\colon y\mapsto \mathscr{N}\big(\beta^{-1}(y)\cap T'\big)
\end{equation}
is $\mathcal{H}^1_{\Y}$-measurable. For the proof of this, we refer the reader to the reasoning given in \cite[Theorem 2.10.10]{F14}. Next, by the metric speed of $\beta$, we will mean the function $\ms_{\beta}\colon T\to \R$ given for any $t\in T$ by the following rule:
    \begin{equation}\label{eq:MetrSp}
            \ms_{\beta}(t)\coloneqq \begin{dcases}
            \lim\limits_{\substack{t'\to t \\ t'\in T}} \frac{\dY(\gamma_t,\gamma_{t'})}{|t'-t|},\quad \parbox{20em}{\center{the point $t$ is a limit point of $T$\\ and the limit exists in $\mathbb{R}$,}}\\
            0,\qquad\qquad\qquad\qquad\qquad \text{otherwise;}
        \end{dcases}
    \end{equation}
    this function can be checked by standard means to be Borel. For more information on this object, we send the reader, for instance, to \cite[Section 4.1]{AT04}. We wish also to emphasize that, as follows from the Kirchheim extension of the Rademacher theorem given in \cite[Theorem 2]{K94}, if the map $\beta$ is Lipschitz, then, for $\mathcal{L}^1$-a.e. $t\in T$, exactly the first alternative in \eqref{eq:MetrSp} is applied. This should always be borne in mind, as eventually we will deal with the metric speed of Lipschitz maps only.

    We also point out the following simple consequence of the given definitions: given $T,T'\subseteq \R$ with $T'\subseteq T$ and a continuous map $\beta\colon T\to \Y$, once we put $\beta'\coloneqq \beta|_{T'}$, we get that
    \begin{equation}\label{eq:MSUniq}
    \begin{gathered}
        \ms_{\beta'}(t)=\ms_{\beta}(t)\\
        \text{for every $t\in T'$ at which the first alternative in \eqref{eq:MetrSp} for $\beta$ is applied};
        \end{gathered}
    \end{equation}
    so, in particular, if the map $\beta$ is Lipschitz, then, from what is said above, we get that the equality in \eqref{eq:MSUniq} holds for $\mathcal{L}^1$-a.e. $t\in T'$.

It is well known that the metric speed plays the role of Jacobian for $1$-dimensional Lipschitz maps from the perspective of the area formula. We incorporate this in the following proposition.
    \begin{prop}\label{prop:ArForm}
        Let $T\subseteq \R$, let $\beta\colon T\to \Y$ be a Lipschitz map. Then, given a Borel set $T'\subseteq \R$ with $T'\subseteq T$, one has
        \begin{equation}\label{eqref:ArForm0}
            \int\limits_{T'} \ms_{\beta}(t)\D \mathcal{L}^1(t)=\int\limits_{\Y} \mathscr{N}\big(\beta^{-1}(y)\cap T'\big)\D \mathcal{H}^1_{\Y}(y).
        \end{equation}
        \begin{proof}
            The statement can be extracted from \cite[Theorem 7]{K94}.
        \end{proof}
    \end{prop}

\begin{rem}\label{rem:MultFIntVar}
It is useful to mention for the subsequent discussion that the integral as in the right-hand side of \eqref{eqref:ArForm0} still measures the variation of the corresponding map (exactly which is represented by the left-hand side therein) even if that map is merely continuous. This can be deduced, for instance, on the base of \cite[Theorem 2.10.13]{F14}.
\end{rem}

Relying on the previous proposition and remark, we now introduce the notion of length for $1$-dimensional maps that are not assumed to be continuous, which (as mentioned in \cref{ss:FrR}) is a crucial step for our exposition. So, given $T\subseteq \R$ and a map $\beta\colon T\to \Y$, by the length of $\beta$, we will mean the quantity
\begin{equation}
    \Len_{\Y}(\beta)\coloneqq \sup\limits_{\kappa} \int\limits_{\Y} \kappa(y)\D \mathcal{H}^1_{\Y}(y),
\end{equation}
where the supremum is taken over all $\mathcal{H}^1_{\Y}$-measurable functions $\kappa\colon \Y\to \overline{\R}_{\geq 0}$ with
\begin{equation}
    \kappa(y)\leq \mathscr{N}\big(\beta^{-1}(y)\big)\qquad\text{for $\mathcal{H}^1_{\Y}$-a.e. $y\in \Y$}.
\end{equation}
In other words, the presented object is nothing but the lower $\mathcal{H}^1_{\Y}$-integral of the standard multiplicity function of the given map; this is indeed a necessary modification here due to the possible lack of the measurability of the latter one. In particular, we can conclude thereby that the new definition matches the usual one (involving the total variation) for continuous maps. At the same time, as can be extracted from the subsequent part, for those $1$-dimensional maps that will be in place in our further proofs, the corresponding multiplicity function will, in fact, be measurable, so the modified length is needed mostly for simplicity of formulations.

Again, if we are given $d\in \Nat_0$ and $p\in \mathsf{P}^d$, we put $\Len_{p}\coloneqq \Len_{\R^d_p}$ for brevity.

We return to discussing notions that will be considered solely with respect to $\X$.

Given $S\subseteq \X$, by $\mathrm{B}(S)$, we will mean the family of all Borel sets in $\X$ that are subsets of $S$. Also, given $S\subseteq \X$, we will use the notation $\dim_{\mathrm{H}}(S)$ for the Hausdorff dimension of $S$.

Given a Borel measure $\mu$ on $\X$ and $Q\in \mathrm{B}(\X)$, by $\mu_{\rmes Q}$, we will mean the Borel measure on $\X$ given for any $Q'\in \mathrm{B}(\X)$ by
\begin{equation}
    \mu_{\rmes Q}(Q')\coloneqq \mu(Q'\cap Q).
\end{equation}

Let $\mu$, $\mu_0$ be Borel measures on $\X$. We will write $\mu\ll \mu_0$ whenever the measure $\mu$ is absolutely continuous with respect to $\mu_0$. If both measures are sigma-finite and $\mu\ll \mu_0$, by $\dfrac{\D \mu}{\D \mu_0}$, we will mean the corresponding Radon-Nikodym derivative as an element of $[\Leb_{\mu_0}]_{\geq 0}$.

Below we introduce the necessary terminology concerning the notion of curve fragments and record several related statements.

By a curve fragment in $\X$, or just fragment in what follows, we will mean a continuous map from a nonempty, compact subset of $\mathbb{R}$ to $\X$. Given a fragment $\gamma$, by $\dom(\gamma)$ and $\im(\gamma)$, we will denote its domain and image, respectively. A fragment $\gamma$ will be called a curve if the set $\dom(\gamma)$ is an interval. Given fragments $\gamma$ and $\gamma'$, we will say that the fragment $\gamma'$ is a subfragment of $\gamma$ if $\dom(\gamma')\subseteq \dom(\gamma)$ and $\gamma|_{\dom(\gamma')}=\gamma'$. Given a fragment $\gamma$ and $t\in \dom(\gamma)$, the value of $\gamma$ at $t$ will be denoted by $\gamma_t$. A fragment $\gamma$ will be called rectifiable if
\begin{equation}
\int\limits_{\X} \mathscr{N}\big(\gamma^{-1}(x)\big)\D \mathcal{H}^1_{\X}(x)<+\infty.
\end{equation}
A fragment $\gamma$ will be called Lipschitz if the map $\gamma$ is Lipschitz. A fragment $\gamma$ will be called biLipschitz if the map $\gamma$ is biLipschitz.

\begin{rem}
    In light of \cref{prop:ArForm}, it obviously holds that any Lipschitz fragment is also a rectifiable fragment. From \cref{rem:MultFIntVar}, in turn, one can extract the motivation behind the naming for the latter ones.
\end{rem}

We record the following standard yet important fact relating rectifiable and Lipschitz fragments.
\begin{prop}\label{prop:FromRectToLip}
    Let $\gamma$ be a rectifiable fragment. Then there exists a Lipschitz fragment $\widehat{\gamma}$ such that
    \begin{equation}
        \mathscr{N}\big(\widehat{\gamma}^{-1}(x)\big)=\mathscr{N}\big(\gamma^{-1}(x)\big)\qquad \text{for $\mathcal{H}^1_{\X}$-a.e. $x\in \X$}.
    \end{equation}
    \begin{proof}
        The essence of the statement, once we also invoke \cref{prop:ArForm} to our consideration, lies in nothing but the existence for rectifiable curve fragments of an analog of the arc-length parametrization for rectifiable curves; for more on the latter, we refer the reader to \cite[Section 5.1]{HKST15}. The presence of this attribute can be verified by quite similar methods to those used for the case of curves, so we omit further details.
    \end{proof}
\end{prop}

\begin{rem}
    As can be seen from the proposed definition of curve fragments, those form a more natural replacement for curves in spaces with the lack of the latter ones, and exactly due to this caveat we prefer to deal everywhere further with curve fragments rather than curves only. For a reference source for us in this context, we send the reader to \cite[Section 2.3]{BEBS24}. At the same time, our exposition differs from the one therein, where curve fragments are a priori understood as being biLipschitz in our terminology. This (essentially insignificant) modification is made by us to achieve the consistency with the problem formulations in \cref{ss:BackInf}, as those deal with all possible curves, rather than, e.g., Lipschitz ones, and therefore are parametrization-invariant. In this regard, we also warn the reader that different parts of the further discussion (depending on our needs) operate with different classes of curve fragments, namely with one of the above introduced, which is always advisable to track. 
\end{rem}

Let $S\subseteq \X$. The family of all fragments, of all rectifiable fragments, of all Lipschitz fragments, and of all biLipschitz fragments whose image lies in $S$ will be denoted, respectively, by $\Fr(S)$, $\Fr_{\mathrm{r}}(S)$, $\Fr_{\mathrm{L}}(S)$, and $\Fr_{\mathrm{bL}}(S)$. Also, given $n\in \mathbb{N}$, we let $\Fr_{\mathrm{bL}}^n(S)$ stand for the family of all $\gamma\in \Fr(S)$ such that
\begin{equation}
    \frac{1}{n} |t'-t|\leq \dX(\gamma_t,\gamma_{t'})\leq n|t'-t|\qquad \text{for all $t,t'\in \dom(\gamma)$}.
\end{equation}
We thus have that
\begin{equation}
    \Fr_{\mathrm{bL}}(S)=\bigcup\limits_{n\in \mathbb{N}}\Fr_{\mathrm{bL}}^n(S).
\end{equation}
Finally, for the sake of consistency, we let $\Cur(\X)$ denote the family of all curves, which forms then a subfamily of $\Fr(\X)$. It, however, will not be of any explicit use later, as all the statements will deal with curve fragments directly, but of course curves will be implicitly present everywhere below.

Given $\G\subseteq \Fr(\X)$ and a closed set $C\subseteq \X$, we put
\begin{equation}
    \G|_C\coloneqq \Big\{\gamma|_{\gamma^{-1}(C)}\bigm| \text{$\gamma\in \G$, $\gamma^{-1}(C)\neq \varnothing$}\Big\},
\end{equation}
which can be seen to define a subfamily of $\Fr(C)$.

Given $Q\in \mathrm{B}(\X)$, we put
\begin{equation}
    \Gamma^+_Q(\X)\coloneqq \Bigg\{\gamma\in \Fr_{\mathrm{r}}(\X) \biggm| \int\limits_Q \mathscr{N}\big(\gamma^{-1}(x)\big)\D \mathcal{H}^1_{\X}(x)>0\Bigg\}.
\end{equation}
As a clarification, this family basically consists of all rectifiable fragments that cover ``positive length'' in the given set.

We shall topologize the family $\Fr(\X)$, for which we do the following. Let $\mathscr{H}(\X\times \R)$ denote the collection of all nonempty, compact subsets of $\X\times \R$ endowed with the topology induced by the Hausdorff distance therein, which can be verified to be a Polish space under our assumptions; for some brief information on all this, we refer the reader to \cite[Section 4.F]{K12}. We then identify $\Fr(\X)$ with a subset of $\mathscr{H}(\X\times \R)$ via the injective map $\Fr(\X)\to \mathscr{H}(\X\times \R)$ that sends $\gamma\in \Fr(\X)$ to its graph, i.e., the set $\big\{(\gamma_t,t)\mid t\in \dom(\gamma)\big\}$. With this, one can check that the families $\Fr_{\mathrm{L}}(\X)$ and $\Fr_{\mathrm{bL}}(\X)$ form Borel subfamilies of $\Fr(\X)$. As one more important feature of this construction, which can be easily checked via the definitions involved, let us mention the following: given a sequence $(\gamma^n)_{n\in \mathbb{N}}\subseteq \Fr(\X)$ that converges to $\gamma\in \Fr(\X)$ in $\Fr(\X)$, it holds that the sequence $\big(\dom(\gamma^n)\big)_{n\in \mathbb{N}}$ converges to $\dom(\gamma)$ with respect to the Hausdorff distance on $\mathbb{R}$.

At some moment later, we will need the following simple consequence of the given definitions: the function
\begin{equation}
    \Fr(\X)\to \mathbb{R}\colon \gamma\mapsto \mathcal{L}^1\big(\dom(\gamma)\big)
\end{equation}
is upper-semicontinuous. This fact is standard and can be checked by quite elementary means. As a side comment, we note that, in general, while the $1$-Hausdorff measure is not always upper-semicontinuous, the so-called $1$-Hausdorff content, which turns out to coincide specifically on $\mathbb{R}$ with the former and hence with the $1$-dimensional Lebesgue measure, does possess this feature; for this fact, see \cite[Lemma 1.5]{AGPP22}.

Next, we introduce the collections
\begin{gather}
    \overline{\Fr}(\X)\coloneqq \Big\{(\gamma,t)\in \Fr(\X)\times \mathbb{R} \bigm| t\in \dom(\gamma)\Big\},\\
    \overline{\Fr}_{\mathrm{bL}}(\X)\coloneqq \overline{\Fr}(\X)\cap\big(\Fr_{\mathrm{bL}}(\X)\times \R\big)
\end{gather}
and equip them with the topologies induced from the product topology on $\Fr(\X)\times \R$. Here we note that the family $\overline{\Fr}_{\mathrm{bL}}(\X)$ forms a Borel subfamily of $\overline{\Fr}(\X)$. In addition to this, we consider the natural evaluation map
\begin{equation}
    \ev\colon \overline{\Fr}(\X)\to \X\colon (\gamma,t)\mapsto\gamma_t,
\end{equation}
which, as can be readily verified via the corresponding definitions, turns out to be continuous and hence Borel. Moreover, given a Borel measure $\nu$ on $\overline{\Fr}(\X)$, by $\ev_{\#}(\nu)$, we will mean the Borel measure on $\X$ obtained as the pushforward of $\nu$ under $\ev$, that is, we have
\begin{equation}
    \ev_{\#}(\nu)(Q)=\nu\big(\ev^{-1}(Q)\big)\qquad \text{for any $Q\in \mathrm{B}(\X)$}.
\end{equation}

    It is convenient to introduce the metric speed functional, by which we will mean the function
    \begin{equation}
        \ms\colon \overline{\Fr}(\X)\to \R_{\geq 0}\colon(\gamma,t)\mapsto \begin{dcases}
            \ms_{\gamma}(t),\quad \gamma\in \Fr_{\mathrm{L}}(\X),\\
            0,\quad \quad \quad \;\text{otherwise}.
        \end{dcases}
    \end{equation}
    A folklore fact in this context is that the resulting function is Borel, which can be verified via all the exploited definitions. Here, we notably ignore all fragments that are not Lipschitz, which, however, completely suffices for our needs.

We now recall the notion of line integration, in a somewhat nonstandard form, as we need to suitably adapt it to the setting of curve fragments. The corresponding motivation for the forthcoming definition can be extracted again from \cref{prop:ArForm}. So, given $\gamma\in \Fr(\X)$ and an $\mathcal{H}^1_{\X}$-measurable function $\rho\colon \X\to \overline{\R}_{\geq 0}$, we put
\begin{equation}
    \int\limits_{\gamma}\rho\coloneqq \begin{dcases} \int\limits_{\X} \rho(x)\mathscr{N}\big(\gamma^{-1}(x)\big) \D \mathcal{H}^1_{\X}(x),\quad\gamma\in \Fr_{\mathrm{r}}(\X),\\
    +\infty,\quad \quad\quad\quad\qquad\qquad\;\;\;\quad\quad\text{otherwise}.
    \end{dcases}
\end{equation}
With this definition, we have, given any $Q\in \mathrm{B}(\X)$, that
\begin{equation}\label{eq:LinIntInjFr}
\begin{gathered}
    \int\limits_{\gamma}\1_{Q}=\mathcal{H}^1_{\X}\big(\im(\gamma)\cap Q\big)\\ \text{for every $\gamma\in \Fr(\X)$ such that the map $\gamma$ is injective}.
    \end{gathered}
\end{equation}

We need to introduce some notation and terminology related to Lipschitz functions on $\X$.
    
By $\LIP(\X)$, we will denote the collection of all Lipschitz functions $\X\to \mathbb{R}$, which clearly forms a vector space. Then, by $\LIP_1(\X)$, we will denote the subcollection of $\LIP(\X)$ made of all $1$-Lipschitz functions. Finally, by $\BLIP(\X)$, we will denote the subcollection of $\LIP(\X)$ made of all Lipschitz functions that are also bounded, which can be additionally seen to form an $\mathbb{R}$-algebra.

Given $d\in \mathbb{N}_0$ and $p\in \mathsf{P}^d$, by $\LIP_1\big(\X,\mathbb{R}_p^d\big)$, we will mean the collection of all $1$-Lipschitz functions $\X\to \mathbb{R}_p^d$.

Let us note separately that, given $d\in \mathbb{N}_0$, any system $\bm{f}\in \Mat^d\big(\LIP(\X)\big)$ can be identified in the obvious way (which we will always tacitly do) with a function $\X\to \mathbb{R}^d$ becoming Lipschitz with respect to any choice of the seminorm on the target space, for which we will moreover use the same symbol.

    Given $\gamma\in \Fr(\X)$ and $f\in \LIP(\X)$, by the derivative of $f\circ \gamma$, we will mean the function $\partial(f\circ \gamma)\colon \dom(\gamma)\to \mathbb{R}$ given for any $t\in \dom(\gamma)$ by the following rule:
     \begin{equation}
           \big(\partial(f\circ \gamma)\big)(t)\coloneqq \begin{dcases}
            \lim\limits_{\substack{t'\to t \\ t'\in \dom(\gamma)}} \frac{f(\gamma_{t'})-f(\gamma_t)}{t'-t},\quad \parbox{18em}{\center{the point $t$ is a limit point of $\dom(\gamma)$\\ and the limit exists in $\mathbb{R}$,}}\\
            0,\qquad\qquad\qquad\qquad\qquad\quad \;\;\text{otherwise.}
        \end{dcases}
    \end{equation}
    It can be routinely verified, essentially as with the metric speed functional, that, given $f\in \LIP(\X)$, the function
    \begin{equation}
        \overline{\Fr}(\X)\to \R_{\geq 0}\colon (\gamma,t)\mapsto \begin{dcases}
            \big(\partial(f\circ \gamma)\big)(t),\quad \gamma\in \Fr_{\mathrm{L}}(\X),\\
            0,\qquad \qquad\qquad\text{otherwise},
        \end{dcases}
    \end{equation}
    is Borel. Moreover, given $p\in \mathsf{P}^{\infty}$, $\gamma\in \Fr_{\mathrm{L}}(\X)$, and $\bm{f}\in \Mat^{\infty}\big(\LIP(\X)\big)$, we put
    \begin{equation}
        \ms^{p}_{\bm{f}\circ \gamma}\coloneqq p\big(\partial(\bm{f}\circ \gamma)\big),
    \end{equation}
    which can be seen to be consistent with the previous notation, since the above quantity ($\mathcal{L}^1$-a.e. on $\dom(\gamma)$) is nothing but the metric speed of $\bm{f}\circ \gamma$ as a map $\dom(\gamma)\to \R^d_p$. Thus, also the function
    \begin{equation}
        \overline{\Fr}_{\mathrm{L}}(\X)\to \R_{\geq 0}\colon (\gamma,t)\mapsto \ms_{\bm{f}\circ \gamma}^p(t)
    \end{equation}
    is once again Borel.

    At a certain point in the sequel, we will exploit the following standard fact: there can be found $(g_l)_{l\in \mathbb{N}}\subseteq \LIP_1(\X)$ such that, for any $\gamma\in \Fr_{\mathrm{L}}(\X)$, one has
    \begin{equation}
        \ms_{\gamma}(t)=\sup\limits_{l\in \mathbb{N}} \big(\partial (g_l\circ \gamma)\big)(t)\qquad \text{for $\mathcal{L}^1$-a.e. $t\in \dom(\gamma)$}.
    \end{equation}
    Typically, one considers the absolute value on the right in the above formula, but, by adding the negatives of the functions in the chosen family thereto, one can achieve what is above as well. For the proposed statement, see, for instance, \cite[Theorem 1.1.2]{AGS05}.

For the remainder of the subsection, let $\mu$ be a given locally finite, Borel measure on $\X$.

Given $S\subseteq \X$, we note first that $\mathrm{B}(S)\subseteq \P_{\mu}(S)$ and put then $\mathrm{B}_{\mu}^+(S)\coloneqq \mathrm{B}(S)\cap \P_{\mu}^+(S)$.

Given $S\subseteq \X$, by a Borel $\mu$-partition of $S$, we will understand a $\mu$-partition of $S$ consisting of Borel subsets of $\X$.

We proceed with introducing the notion of fragment plans, the importance of which will become clear later, and discussing several related constructions. The corresponding presentation relies primarily on \cite[Section 2.5]{BEBS24} but involves some minor adaptations necessary to match our choice of the definition of curve fragments.

Let $\mathbb{P}$ be a finite, Borel measure on $\Fr(\X)$. The assignment $\mathbb{P}^{\#}$ given by
\begin{equation}\label{eq:BarCMeas}
    \mathbb{P}^{\#}(Q)\coloneqq \int\limits_{\ev^{-1}(Q)} \ms\D(\mathbb{P}\otimes \mathcal{L}^1), \qquad \text{$Q\in \mathrm{B}(\X)$},
\end{equation}
can be checked to define a Borel measure on $\X$, which is typically called the barycenter of $\mathbb{P}$. With this, the measure $\mathbb{P}$ will be called a fragment plan on $(\X,\mu)$ if $\mathbb{P}\big(\Fr(\X)\backslash \Fr_{\mathrm{bL}}(\X)\big)=0$, $\mathbb{P}^{\#}(\X)<+\infty$, and $\mathbb{P}^{\#}\ll \mu$. Thus, fragment plans in our exposition, by definition, are concentrated on biLipschitz fragments only, which should be always kept in mind further. Moreover, we put in correspondence to $\mathbb{P}$ the respective finite, Borel measure $\overline{\mathbb{P}}$ on $\overline{\Fr}(\X)$ given for every Borel family $\mathcal{G}\subseteq \overline{\Fr}(\X)$ by
\begin{equation}\label{eq:ExtPlan}
    \overline{\mathbb{P}}(\mathcal{G})\coloneqq  \int\limits_{\mathcal{G}} \ms\D(\mathbb{P}\otimes \mathcal{L}^1).
\end{equation}
With all these definitions, it is possible to detect that
\begin{equation}
    \ev_{\#}(\overline{\mathbb{P}})=\mathbb{P}^{\#},
\end{equation}
which will be tacitly used in what follows.

Let $\mathbb{P}$ be a fragment plan on $(\X,\mu)$. Then, given a Borel set $\G\subseteq \Fr(\X)$, the measure $\mathbb{P}_{\rmes \G}$ on $\Fr(\X)$ given for any Borel family $\G'\subseteq \Fr(\X)$ by
\begin{equation}
    \mathbb{P}_{\rmes \G}(\G')\coloneqq \mathbb{P}(\G'\cap \G)
\end{equation}
can be easily checked to be a fragment plan on $(\X,\mu)$ as well.

For the sequel, we shall invoke the notion of disintegration of measures with respect to maps, in application to fragment plans specifically, so we provide the following statement.
\begin{prop}
 Let $\mathbb{P}$ be a fragment plan on $(\X,\mu)$. Then there exists a $\mu$-a.e. unique family $(\overline{\mathbb{P}}_x)_{x\in \X}$ of finite, Borel measures on $\overline{\Fr}(\X)$ with the following properties:
   \begin{enumerate}
       \item[$\rm I)$] one has
       \begin{equation}
           \overline{\mathbb{P}}_x\Big(\overline{\Fr}(\X)\big\backslash \big(\ev^{-1}(x)\cap \overline{\Fr}_{\mathrm{bL}}(\X)\big)\Big)=0\qquad \text{for $\mu$-a.e. $x\in \X$};
       \end{equation}
       \item[$\rm II)$] choosing a Borel $\mu$-representative $\rho$ of $\dfrac{\D \mathbb{P}^{\#}}{\D \mu}$ and putting $Q\coloneqq \rho^{-1}(\R_{>0})$, one has
       \begin{gather}
\overline{\mathbb{P}}_x\big(\overline{\Fr}(\X)\big)=\1_Q(x)\qquad \text{for $\mu$-a.e. $x\in \X$};
       \end{gather}
       \item[$\rm III)$] given any Borel family $\mathcal{G}\subseteq \overline{\Fr}(\X)$, the function
       \begin{equation}
           \X\to \mathbb{R}\colon x\mapsto \overline{\mathbb{P}}_x(\mathcal{G})
       \end{equation}
       is $\mathbb{P}^{\#}$-measurable;
       \item[$\rm IV)$] given any Borel family $\mathcal{G}\subseteq \overline{\Fr}(\X)$, one has
       \begin{equation}
\overline{\mathbb{P}}(\mathcal{G})=\int\limits_{\X}\overline{\mathbb{P}}_x(\mathcal{G})\D \mathbb{P}^{\#}(x).
       \end{equation}
   \end{enumerate}
   \begin{proof}
       The necessary reasoning is given in \cite[Section 4]{BEBS24}; see also \cite[Theorem 5.3.1]{AGS05}.
   \end{proof}
   \end{prop}

Finally, we recall the notion of $\infty$-modulus; we refer to \cite[Section 2.4]{BEBS24} as the corresponding main reference, in comparison with which, however, we again have some implicit differences of the earlier-discussed nature (exactly as in the case of fragments plans). For a comprehensive study of moduli in general, we send the reader to \cite[Section 5]{HKST15}.

Given $\G\subseteq \Fr(\X)$, a Borel function $\rho\colon \X\to \overline{\R}_{\geq 0}$ is said to be admissible for $\G$ if
\begin{equation}
    \int\limits_{\gamma}\rho \geq 1\qquad \text{for every $\gamma\in \G$}.
\end{equation}
With this in mind, given $\G\subseteq \Fr(\X)$, by the $\infty$-modulus of $\G$ with respect to $\mu$, one means the quantity
\begin{equation}
    \Mod^{\mu}_{\infty}(\G)\coloneqq \inf\limits_{\rho}\|\rho\|_{\Leb^{\infty}_{\mu}},
\end{equation}
where the infimum is taken over all Borel functions $\rho\colon \X\to \overline{\R}_{\geq 0}$ that are admissible for $\G$. It turns out that the $\infty$-modulus as a function defined on subfamilies of $\Fr(\X)$ is an outer measure. As an important consequence of the given definition, we also immediately notice that
\begin{equation}\label{eq:ModConcRFr}
    \Mod^{\mu}_{\infty}\big(\Fr(\X)\backslash \Fr_{\mathrm{r}}(\X)\big)=0.
\end{equation}
In other words, the $\infty$-modulus is concentrated on rectifiable fragments, which will be always taken into account.

\begin{rem}
    In light of the attribute given in \eqref{eq:ModConcRFr}, we wish to mention the following. This property implies that, in the formulations of \cref{theo:MainRes} and \cref{theo:MainResEucl}, it would change nothing if the main inequalities there were required to hold ``generically'' for rectifiable curves only rather than for all curves. Saying differently, the use of the $\infty$-modulus reduces the consideration immediately to rectifiable curves (or, more generally, curve fragments) by its very nature. In this regard, it would perhaps be more ``honest'' to compare the central property of the maps within our results, i.e., the presence of controls as in \eqref{eq:GenBLD}, not with the path-isometry and BLD conditions but with the so-called weak path-isometry and BLD conditions, respectively. Namely, unlike for the former ones, for the latter ones corresponding inequalities on the lengths must hold for rectifiable curves only, which is known to be indeed a weaker assumption; for more on this, see \cite[Definition 4.1]{P11} specifically and the content therein in general.
\end{rem}

We will need further the following simple statement.
\begin{prop}\label{prop:SubsZerMod}
    Let $Q\in \mathrm{B}(\X)$ be with $\mu(Q)=0$. Then one has
    \begin{equation}
    \Mod_{\infty}^{\mu}\big(\Gamma^+_Q(\X)\big)=0.
    \end{equation}
    \begin{proof}
        The proof can be found in \cite[Lemma 2.7]{BEBS24}.
    \end{proof}
\end{prop}

We prefer to explicitly record also the following useful fact.
\begin{prop}\label{prop:SubFrag}
    Let $\G_0,\G\subseteq \Fr(\X)$ be such that any fragment $\gamma\in \G_0$ contains a subfragment lying in $\G$. Then one has
    \begin{equation}
        \Mod^{\mu}_{\infty}(\G_0)\leq \Mod^{\mu}_{\infty}(\G).
    \end{equation}
    \begin{proof}
        One just needs to notice that any function admissible for $\G$ is also admissible for $\G_0$.
    \end{proof}
\end{prop}

We finalize this subsection with the definition of equi-Luzin--Lipschitz maps, exactly which are in place in our main results.
\begin{defi}\label{def:LuzLip}
    Given $E\in \mathrm{P}_{\mu}(\X)$, $d\in \Nat_0$, $q\in \mathsf{P}^d$, and $L\in \R_{\geq 0}$, a map $\varphi\colon E\to \mathbb{R}^d_{q}$ is said to be $L$-Luzin--Lipschitz if there is a $\mu$-partition $(E_m)_{m\in \mathbb{N}}$ of $E$ such that, for each $m\in \mathbb{N}$, the map $\varphi|_{E_m}$ is $L$-Lipschitz.
\end{defi}
\noindent In the above terms, once the initial set and the partition are Borel, such a map clearly turns out to be Borel.

\subsection{On Weaver derivations}\label{ss:WD} Here we give some important information on the notion of Weaver derivations necessary for our exposition.

\begin{rem}
    In the last decades, there was successfully developed a fruitful theory of ``nonsmooth differential geometry'' in the setting of metric measure spaces, within which one can speak, in particular, about specific ``vector fields'' (of different kinds) on such spaces, perfectly in line with the classical differential geometry. Moreover, the collection of such objects (subject to a chosen approach) forms naturally a certain normed module, which is typically referred to as a ``tangent module'' for the underlying space; remarkably, this allows one to operate rigorously, for instance, with ``infinitesimal tangent norms'', as announced in \cref{ss:FrR}. From the perspective of the very Sobolev calculus, this program was accomplished mostly by N. Gigli, the comprehensive presentation of which can be found in \cite{G18}. The foundation for all that, in turn, lay in earlier works by N. Weaver, for which we send the reader to \cite[Section 10]{W18} and references therein. One of the central objects within the corresponding analysis is what are known as Weaver derivations, which can be viewed as a nonsmooth generalization of derivations on smooth manifolds. And exactly these, which we find the most suitable to exploit (among other tools to define a ``tangent module'' in our framework), fall under the forthcoming consideration; as the main source on this matter though, we dominantly rely on \cite{BEBS24}, as closer related to our work.
\end{rem}

Let $\mu$ be a locally finite, Borel measure on $\X$. By a Weaver $\mu$-derivation on $\X$, or just $\mu$-derivation in what follows, we will mean a linear map $\Dif\colon \LIP(\X)\to \Leb^{\infty}_{\mu}$ with the following properties:
\begin{enumerate}
\item[$\rm I)$] it is bounded, i.e., one has
\begin{equation}
    \sup\limits_{f\in \LIP_1(\X)} \big\|\Dif(f)\big\|_{\Leb^{\infty}_{\mu}}<+\infty;
\end{equation}
    \item[$\rm II)$] it satisfies the Leibniz rule, i.e., given $f_1,f_2\in \BLIP(\X)$, one has
    \begin{equation}
        \Dif(f_1f_2)=f_1\Dif(f_2)+f_2\Dif(f_1) \qquad \text{$\mu$-a.e. on $\X$};
    \end{equation}
    \item[$\rm III)$] it is $*$-weakly continuous, i.e., given $f\in \LIP_1(\X)$ and $(f_n)_{n\in \mathbb{N}}\subseteq \LIP_1(\X)$ such that the sequence $(f_n)_{n\in \mathbb{N}}$ converges pointwise on $\X$ to $f$, the sequence $\big(\Dif(f_n)\big)_{n\in \mathbb{N}}$ converges $*$-weakly in $\Leb^{\infty}_{\mu}$ to $\Dif(f)$.
\end{enumerate}
The collection of all $\mu$-derivations will be denoted by $\mathcal{X}_{\mu}$. It can be seen to possess a natural structure of an $\Leb_{\mu}^{\infty}$-module. Moreover, the assignment
\begin{equation}
   |\Dif|_{\mathcal{X}_{\mu}}\coloneqq \bigvee\limits_{f\in \LIP_1(\X)} \big|\Dif(f)\big|_{\Leb_{\mu}},\qquad \text{$\Dif\in \mathcal{X}_{\mu}$},
\end{equation}
results in an $\Leb_{\mu}$-norm $|\cdot|_{\mathcal{X}_{\mu}}$ on $\mathcal{X}_{\mu}$, as can be easily checked. With all this, we have at our disposal the $\Leb_{\mu}$-normed $\Leb_{\mu}^{\infty}$-module $\mathcal{X}_{\mu}=\big(\mathcal{X}_{\mu},|\cdot|_{\mathcal{X}_{\mu}}\big)$. Exactly to this module we will subsequently apply all the results established in the previous sections.

For the purposes of our work, the cornerstone result that we will exploit later (and which is essentially the only ``black box'' for the whole exposition) concerns exactly Weaver derivations. Loosely speaking, it states that the dimension of the corresponding module, which can itself depend quite strongly on the underlying measure, can be bounded from above by the Hausdorff dimension of the underlying metric space, which remarkably knows nothing about the measure involved. More accurately, this result reads as follows.
\begin{theo}\label{theo:FromHausToWeav}
    Let $\mu$ be a locally finite, Borel measure on $\X$. Then, given any $Q\in \mathrm{B}(\X)$, the following estimate holds:
    \begin{equation}
        \dim_{\mathcal{X}_{\mu}}(Q)\leq \dim_{\mathrm{H}}(Q).
    \end{equation}
    \begin{proof}
        The provided statement stems basically from several deep results established within \cite{BEBS24} by Bate--Eriksson-Bique--Soultanis. For the convenience of the reader though, we prefer to give an outline of the proof.
        
        Fix $Q\in \mathrm{B}(\X)$ and put $d\coloneqq \dim_{\mathrm{H}}(Q)$ for brevity. In the case $d=+\infty$ there is nothing to prove, so further we may clearly assume that $d<+\infty$.
        
        To satisfy completely the setting of \cite{BEBS24}, we find an increasing family of open subsets $(O_l)_{l\in \mathbb{N}}$ of $\X$ that covers $\X$ with $\mu(O_l)<+\infty$ for each $l\in \mathbb{N}$, which is clearly possible as the space $\X$ is separable, and hence Lindel{\"o}f, and the measure $\mu$ is locally finite. Fix then $l\in \mathbb{N}$ and consider the measure $\mu_l\coloneqq \mu_{\rmes Q\cap O_l}$, which is now finite and Borel.
        
        It then follows from \cite[Theorem 5.5]{BEBS24} that the space $(\X,\mu_l)$ admits a so-called fragment-wise differentiable structure, with the corresponding charts being moreover of dimension not greater than $d$ in our case. Informally speaking, this means that the space $\X$ can be $\mu_l$-partitioned into a countable collection of special finite-dimensional charts, each of which consists of a Borel set in $\X$ and a system of real-valued Lipschitz functions on $\X$ with some desirable properties. This feature, in turn, is a relatively straightforward consequence of the profound result in \cite[Theorem 5.3]{BKO23} allowing one to relate exactly the Hausdorff dimension of the space (visible by the measure) and the number of ``independent directions'' in the space, which basically equals (after applying appropriate definitions) the dimension of charts thereon. Then, as explained in \cite[Section 5.4]{BEBS24}, the presence of a fragment-wise differentiable structure on the space gives rise to a certain finite-dimensional measurable bundle $\mathcal{T}_{\mu_l}$ over $(\X,\mu_l)$, which can be viewed as a ``tangent bundle'' of $(\X,\mu_l)$. Based on this, one can consider the $\Leb_{\mu_l}$-normed $\Leb^{\infty}_{\mu_l}$-module $\Gamma_{\infty}(\mathcal{T}_{\mu_l})$ of $\Leb^{\infty}_{\mu_l}$-sections of $\mathcal{T}_{\mu_l}$, as discussed again in \cite[Section 5.4]{BEBS24}. This module turns out to be finite-dimensional, and, in fact, its dimension does not exceed $d$. While all this is not explicitly demonstrated in \cite{BEBS24}, a completely analogous property is verified in \cite[Section 6.1]{EBS24}, within an earlier work by Eriksson-Bique--Soultanis on morally the same subject.

        One more pivotal result by Bate--Eriksson-Bique--Soultanis, namely \cite[Theorem 1.4]{BEBS24}, states that, under the existence of a fragment-wise differentiable structure on $(\X,\mu_l)$, the modules $\Gamma_{\infty}(\mathcal{T}_{\mu_l})$ and $\mathcal{X}_{\mu_l}$ are isomorphic as normed modules (in a natural sense), which, together with the dimensional bound for $\Gamma_{\infty}(\mathcal{T}_{\mu_l})$, yields that
        \begin{equation}
            \dim_{\mathcal{X}_{\mu_l}}(Q)=\dim_{\Gamma_{\infty}(\mathcal{T}_{\mu_l})}(Q)\leq d.
        \end{equation}
        
       As the last step, it remains only to exploit the locality of Weaver derivations, provided in \cite[Theorem 10.37]{W18}, which means basically, in our terms, that
        \begin{equation}
            \mathcal{X}_{\mu}|_{Q\cap O_l}=\mathcal{X}_{\mu_l}.
        \end{equation}
        As we clearly have, from the corresponding definitions, that
        \begin{equation}
            \dim_{\mathcal{X}_{\mu}}(Q)=\sup\limits_{l\in \mathbb{N}}\dim_{\mathcal{X}_{\mu}|_{Q\cap O_l}}(Q),
        \end{equation}
        we get the desired bound.
    \end{proof}
\end{theo}

In terms of Weaver derivations, we now introduce the following (rather nonstandard) notion, which is exactly the one that appears in \cref{theo:MainResEucl}.
\begin{defi}\label{def:InfEucl}
    Let $\mu$ be a locally finite, Borel measure on $\X$. The space $(\X,\mu)$ will be called infinitesimally Euclidean if the module $\mathcal{X}_{\mu}$ is Euclidean.
\end{defi}

\begin{rem}
    The proposed notion (as mentioned already in \cref{ss:FrR}) should be compared to, but not confused with, that of infinitesimal Hilbertianity. The latter one represents the idea that a given metric measure space will turn out to possess Euclidean ``infinitesimal tangent norms'' at its points once we require the Sobolev $\mathrm{W}^{1,2}$-space thereon, which is a priori just a Banach space, to be a Hilbert space. Practically, that condition requires that a certain normed module, which is fair to call a ``cotangent module'', over the given space must be Euclidean, which basically coincides with what we do, with the only difference lying in that, for our purposes, another module is more natural to consider. It would be interesting to relate these two notions, yet we do not include this question into the scope of our work, as it is not really central thereto.
\end{rem}

Before proceeding to the next section, we recall the notion of exterior derivative from the context of Weaver derivations. A detailed exposition on this can be found in \cite[Section 10.3]{W18}. 

Let $\mu$ be a locally finite, Borel measure on $\X$. By the exterior derivative on $(\X,\mu)$, we will mean the map 
\begin{equation}\label{eq:ExtDer1}
    \D_{\mu}\colon \LIP(\X)\to \mathcal{X}_{\mu}^{\star}\colon f\mapsto \D_{\mu}f
\end{equation}
given by the assignment
\begin{equation}\label{eq:ExtDer2}
    (\D_{\mu} f)(\mathcal{D})\coloneqq \mathcal{D}(f),\qquad \text{$f\in \LIP(\X)$}.
\end{equation}
It is then quite easy to see that the map $\D_{\mu}$ is well posed and linear, as well as that
\begin{equation}
    \D_{\mu} \big(\LIP_1(\X)\big)\subseteq [\mathcal{X}_{\mu}^{\star}]_{\leq 1}.
\end{equation}
As a direct, yet important, consequence of that, we note the following: given $d\in \Nat_0$, $q\in \mathsf{P}^d$, and $\bm{f}\in \Mat^d\big(\LIP(\X)\big)$ with $\bm{f}\in \LIP_1\big(\X,\mathbb{R}^d_q\big)$, we have
\begin{equation}
    q\big(\D_{\mu}(\bm{f})\big)\leq 1\qquad \text{$\mu$-a.e. on $\X$}. 
\end{equation}

\begin{rem}
    We warn the reader about the following notational aspect. In the current manuscript, we actively refer to \cite{BEBS24}, where (as already discussed in the course of the proof of \cref{theo:FromHausToWeav}) the authors establish (under suitable conditions) the existence of a certain fragment-wise differentiability structure on a given space. Along with this, there emerges the corresponding fragment-wise differential that also acts on Lipschitz functions on the space. As one of the main results in the provided work, namely \cite[Theorem 1.4]{BEBS24}, demonstrates, the latter differential (under the same conditions as necessary for its existence) can be naturally identified with the exterior derivative in the sense we introduced. We, however, will never use either this correspondence or the fragment-wise differential itself. Saying differently, we will deal in what follows only with the exterior derivative in the above sense.
\end{rem}

\subsection{To geometric distortion factors}\label{ss:GDF} Now we turn our attention to one more key notion of the current work, namely that of geometric distortion factors.

Throughout this subsection, a locally finite, Borel measure $\m$ on $\X$ is assumed fixed.

\begin{rem}
    The definition that we are about to present is useful to compare with several other ones appearing in the modern mathematical literature, notably in slightly different contexts. First, we mention the notion of $\ast$-upper gradient (covered, for instance, in \cite[Section 2.4]{BEBS24}), which is somewhat dual (in essentially the same way as for the discussion in \cref{rem:MotAnDis}) to the forthcoming one. For some other types of (weak) upper gradients, which also fit into this discourse, we refer the reader to \cite[Section 6]{HKST15}. And second, we add here the notion of maximal weak subslope from the nonsmooth Lorentzian geometry proposed in \cite[Section 3.3]{BBCGMORS24}. The main common feature of these objects, including the one introduced below, lies in the fact that they measure the extreme rate of change of the speed of curves (or curve fragments) under a given map.
\end{rem}

The relevant definition is as follows. We draw the reader's attention to direct parallels of it with \cref{def:DistCoef} and \cref{def:DistCoefFam}.
\begin{defi}\label{def:GeoDisFac}
    Let $p\in \mathsf{P}^{\infty}$. Given $\bm{f}\in \Mat^{\infty}\big(\LIP(\X)\big)$, by the geometric distortion factor of $\bm{f}$ with respect to $p$, we will mean the element of $[\overline{\Leb}_{\m}]_{\geq 0}$ obtained by
    \begin{equation}\label{eq:GeoDisFac0}
        \GDis_{p}[\bm{f}]\coloneqq \bigvee\limits_{\rho}\rho,
    \end{equation}
    where the join is taken over all Borel functions $\rho\colon \X\to \R_{\geq 0}$ such that, for $\Mod^{\m}_{\infty}$-a.e. $\gamma\in \Fr_{\mathrm{L}}(\X)$, one has
    \begin{equation}\label{eq:GeoDisFac00}
        \ms^{p}_{\bm{f}\circ \gamma}(t)\geq \rho(\gamma_t)\ms_{\gamma}(t) \qquad \text{for $\mathcal{L}^1$-a.e. $t\in \dom(\gamma)$}.
    \end{equation}
    Also, given $\bm{F}\subseteq \Mat^{\infty}\big(\LIP(\X)\big)$, by the geometric distortion factor of $\bm{F}$ with respect to $p$, we will mean the element of $\overline{\Leb}_{\m}$ obtained by
    \begin{equation}\label{eq:GeoDisFac000}
        \GDis_{p}[\bm{F}]\coloneqq \bigvee\limits_{\bm{f}\in\bm{F}} \GDis_p[\bm{f}].
    \end{equation}
\end{defi}
\noindent Note that, once the family $\bm{F}$ above is nonempty, the corresponding formula clearly defines an element of $[\overline{\Leb}_{\m}]_{\geq 0}$.

\begin{rem}
    We point out to the following technical difference between \cref{def:DistCoef} and \cref{def:GeoDisFac}: in the former case, we take the join over equivalence classes up to the $\m$-a.e. equality of functions $\X\to \R_{\geq 0}$, while in the latter case, the join is taken over genuine Borel functions $\X\to \R_{\geq 0}$. The reason for that lies in the standard subtlety of working with (in fact, any kind of) modulus that (unlike in the closely related case of plans) the composition of a $\m$-a.e. defined function on $\X$ with a curve (or a curve fragment) is not generally well defined.
\end{rem}

Analogously to the situation with \cref{def:DistCoef}, given $p\in \mathsf{P}^{\infty}$ and $\bm{f}\in \Mat^{\infty}\big(\LIP(\X)\big)$, one can readily verify the following: letting $\rho_0$ be an arbitrary Borel $\m$-representative of $\GDis_{p}[\bm{f}]$, one has, for $\Mod_{\infty}^{\m}$-a.e. $\gamma\in \Fr_{\mathrm{L}}(\X)$, that
\begin{equation}\label{eq:GeoDisFac0000}
    \rho_0(\gamma_t)\ms_{\gamma}(t)\leq \ms_{\bm{f}\circ \gamma}^{p}(t)\qquad \text{for $\mathcal{L}^1$-a.e. $t\in \dom(\gamma)$}.
\end{equation}
Also, we notice that both functions given by \eqref{eq:GeoDisFac0} and \eqref{eq:GeoDisFac000} are absolutely $\mathbb{R}$-homogeneous, in the usual sense.

The definition of geometric distortion factors is given in terms of modulus, rather than in terms of fragment plans, as it is slightly more natural and gives finer information about the family of all ``bad'' curves for a given Lipschitz system, i.e., those that violate the condition in \eqref{eq:GeoDisFac00}. It is much more convenient though to deal with fragment plans, as these possess much more measurable families (Borel ones particularly), and it is those that will be exploited in the next subsection for obtaining necessary estimates on geometric distortion factors. For this reason, we provide the lemma below, which shows basically that the two approaches lead to the same object. For more on connections between plans and moduli, we send the reader to \cite{EKMM21}.
\begin{lem}\label{lem:ModToPlan}
    Let $Q\in \mathrm{B}_{\m}^+(\X)$, $p\in \mathsf{P}^{\infty}$, $\bm{f}\in \Mat^{\infty}\big(\LIP(\X)\big)$, $\zeta\in \R_{> 0}$. Suppose one has
    \begin{equation}\label{eq:ModToPlan0}
        \GDis_{p}[\bm{f}]< \zeta\qquad \text{$\m$-a.e. on $Q$}.
    \end{equation}
    Then there exists a nonzero fragment plan $\mathbb{P}$ on $(\X,\m)$ such that, for $\mathbb{P}$-a.e. $\gamma\in \Fr_{\mathrm{bL}}(\X)$, one has
    \begin{equation}\label{eq:ModToPlan00}
       \mathcal{L}^1\bigg(\Big\{t\in \gamma^{-1}(Q)\bigm| \ms^{p}_{\bm{f}\circ \gamma}(t)< \zeta\ms_{\gamma}(t)\Big\}\bigg)>0.
    \end{equation}
\end{lem}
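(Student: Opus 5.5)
The plan is to argue by contradiction at the level of the $\infty$-modulus, and then convert a family of positive modulus into a fragment plan by a duality argument.

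\emph{Step 1: positive modulus for Lipschitz fragments.} Consider the family $\G_{\mathrm{L}}$ of all $\gamma\in\Fr_{\mathrm{L}}(\X)$ for which the set $B_\gamma\coloneqq\{t\in\gamma^{-1}(Q)\mid \ms^{p}_{\bm{f}\circ\gamma}(t)<\zeta\ms_\gamma(t)\}$ has positive $\mathcal{L}^1$-measure. I claim $\Mod^{\m}_\infty(\G_{\mathrm{L}})>0$. Suppose instead that $\Mod^{\m}_\infty(\G_{\mathrm{L}})=0$.
\begin{itemize}
\item[(a)] Let $\rho_0$ be a Borel $\m$-representative of $\GDis_p[\bm{f}]$, truncated to be finite, and put $\rho\coloneqq\zeta\1_Q+\rho_0\1_{\X\backslash Q}$.
\item[(b)] Off $Q$, the inequality \eqref{eq:GeoDisFac00} holds for $\rho$ along $\Mod^{\m}_\infty$-a.e. $\gamma\in\Fr_{\mathrm{L}}(\X)$, by \eqref{eq:GeoDisFac0000}.
\item[(c)] On $Q$, the inequality holds for every $\gamma\notin\G_{\mathrm{L}}$, by the very definition of $\G_{\mathrm{L}}$.
\end{itemize}
Since $\Mod^{\m}_\infty$ is an outer measure, $\rho$ is admissible in \cref{def:GeoDisFac}. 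Hence $\GDis_p[\bm{f}]\geq\zeta$ $\m$-a.e. on $Q$, which contradicts \eqref{eq:ModToPlan0} because $\m(Q)>0$.

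\emph{Step 2: reduction to biLipschitz fragments.} Fix $\gamma\in\G_{\mathrm{L}}$. At points of $B_\gamma$ we necessarily have $\ms_\gamma>0$. By the standard Kirchheim-type decomposition of a Lipschitz map on the set where its metric speed is positive, $\{\ms_\gamma>0\}$ is covered up to an $\mathcal{L}^1$-null set by countably many compact sets $K$ on which $\gamma|_K$ is biLipschitz. By \eqref{eq:MSUniq}, and the analogous a.e. statement for $\partial(\bm{f}\circ\gamma)$, the metric speeds of $\gamma$ and $\bm{f}\circ\gamma$ are preserved a.e. on each such $K$. Consequently some subfragment $\gamma|_K$ lies in
\[
\G\coloneqq\Big\{\gamma\in\Fr_{\mathrm{bL}}(\X)\bigm|\mathcal{L}^1(B_\gamma)>0\Big\}.
\]
By \cref{prop:SubFrag}, $\Mod^{\m}_\infty(\G)\geq\Mod^{\m}_\infty(\G_{\mathrm{L}})>0$. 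Next write $\G=\bigcup_{n}\G_n$ with $\G_n\coloneqq\G\cap\Fr^n_{\mathrm{bL}}(\X)$, and further restrict to fragments with domain in $[-n,n]$ and image in a set of finite measure. Countable subadditivity then yields some $n$ with $\Mod^{\m}_\infty(\G_n)>0$. Each $\G_n$ is Borel, because $\ms$, $\ms^p_{\bm{f}\circ\gamma}$ and $\ev$ are Borel on $\overline{\Fr}_{\mathrm{bL}}(\X)$ and $\gamma\mapsto\mathcal{L}^1(B_\gamma)$ is then Borel by Fubini.

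\emph{Step 3: from modulus to a plan.} This is the step I expect to be the main obstacle. I will invoke the $\infty$-modulus/plan duality from \cite[Section 2]{BEBS24}; see also \cite{EKMM21}. It states that a Borel family $\G'\subseteq\Fr^n_{\mathrm{bL}}(\X)$ of uniformly bounded fragments with $\Mod^{\m}_\infty(\G')>0$ carries a finite Borel measure $\mathbb{P}$, concentrated on $\G'$, whose barycenter $\mathbb{P}^{\#}$ is finite and absolutely continuous with respect to $\m$ (indeed with bounded density).
\begin{itemize}
\item[(i)] If the cited result is phrased only for compact families, I would first use inner regularity of the Choquet capacity $\Mod_\infty$ on Borel, hence analytic, families to pass to a compact subfamily of positive modulus.
\item[(ii)] If it is phrased for a finite measure, I would apply it with $\m_{\rmes Q\cap O}$ for a suitable open $O$ of finite measure.
\end{itemize}
In either case $\mathbb{P}$ is a nonzero fragment plan on $(\X,\m)$. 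Replacing it by $\mathbb{P}_{\rmes\G_n}$, which is again a fragment plan, ensures that $\mathbb{P}$-a.e. $\gamma$ satisfies \eqref{eq:ModToPlan00}.
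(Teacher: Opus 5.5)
Your Steps 1 and 2 are sound; Step 1 is the paper's opening observation, with $Q$ in place of a closed $C\subseteq Q$. The gap is Step 3, which carries the real difficulty.

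The duality you invoke says that a Borel family $\G'\subseteq\Fr^n_{\mathrm{bL}}(\X)$ with $\Mod^{\m}_\infty(\G')>0$ carries a nonzero fragment plan concentrated on $\G'$ itself. This is not available in the form you need, and you give no argument for it. For uniformly bounded biLipschitz families, $\Mod^{\m}_\infty(\G')>0$ only says that for every $\m$-null Borel $N$, \emph{some} $\gamma\in\G'$ meets $N$ in zero length. By contrast, $\mathbb{P}^{\#}\ll\m$ for a measure on $\G'$ requires that, for every such $N$, $\mathbb{P}$-\emph{almost every} $\gamma$ does. Passing from the first to the second is a Kellerer-type duality that would have to be proved.

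The tool that \cite{BEBS24} actually provides is Rainwater's lemma (\cref{lem:RainWater}). It needs a compact family, and it only produces a plan concentrated on $\G'|_K$, i.e.\ on the restrictions $\gamma|_{\gamma^{-1}(K)}$ to a compact $K$ of positive measure. Cutting the fragments is what removes the singular part of the barycenter. Your defining condition $\mathcal{L}^1(B_\gamma)>0$ is not inherited by such restrictions: a restricted fragment may miss $B_\gamma$ entirely, or even have null domain. So you cannot conclude \eqref{eq:ModToPlan00}.

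Your fallbacks do not repair this.
\begin{itemize}
\item[(i)] $\Mod_\infty$ is not a Choquet capacity. In $\R^2$ with Lebesgue measure, the compact families of horizontal unit segments at heights in $[0,1/k]$ all have $\infty$-modulus $1$, while their intersection, a single segment, has modulus $0$. Also, your $\G_n$ is not closed. And even a compact subfamily would still only yield a plan on $\G'|_K$.
\item[(ii)] Passing to $\m_{\rmes Q\cap O}$ can destroy positivity of the modulus. Your fragments are not contained in $Q$, and any fragment with positive length outside $Q\cap O$ becomes negligible for that measure. The paper uses $\m_{\rmes O}$ together with fragments contained in $O$, which leaves the modulus unchanged.
\end{itemize}

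The missing idea is to replace the non-hereditary condition $\mathcal{L}^1(B_\gamma)>0$ by a closed, hereditary, quantitative one before applying Rainwater. The paper fixes a closed $C\subseteq Q$ of positive measure and compact sets $K_m$. It then considers the families $\G^n_m$ of $n$-biLipschitz fragments with image in $C\cap K_m$ and domain in $[-n,n]$ of measure at least $1/n$, satisfying the two-point bound $p\big(\bm{f}(\gamma_{t'})-\bm{f}(\gamma_t)\big)\le\frac{n\zeta}{n+1}\dX(\gamma_t,\gamma_{t'})$ for all $t,t'$. The argument then runs as follows.
\begin{itemize}
\item[(a)] An Egorov argument on the difference quotients shows that every bad fragment outside $\Gamma^+_{Q_0}(\X)$ has a subfragment in some $\G^n_m$.
\item[(b)] Arzel\`a--Ascoli gives compactness of each $\G^n_m$, and Rainwater then gives a plan on $\G^{n_0}_{m_0}|_K$.
\item[(c)] Restriction preserves the two-point bound, the biLipschitz bound and the inclusion of the image in $C\subseteq Q$.
\item[(d)] One restricts the plan further to fragments with $\mathcal{L}^1(\dom(\gamma))\ge 1/\widehat{n}$, which keeps it nonzero because $\mathbb{P}_0^{\#}(\X)>0$. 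Each remaining fragment then satisfies $\ms^p_{\bm{f}\circ\gamma}\le\frac{n_0}{n_0+1}\zeta\ms_\gamma<\zeta\ms_\gamma$ a.e.\ on a domain of positive measure contained in $\gamma^{-1}(Q)$, which is \eqref{eq:ModToPlan00}.
\end{itemize}
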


For the proof of the statement above, we shall exploit the following result, referred to as Rainwater's lemma in \cite[Section 4.1]{BEBS24}.
\begin{lem}\label{lem:RainWater}
    Let $\mu$ be a finite, Borel measure on $\X$, let $\G\subseteq \Fr_{\mathrm{bL}}(\X)$ be a compact family with $\Mod_{\infty}^{\mu}(\G)>0$ such that, for every $\gamma\in \G$, one has $\mathcal{L}^1\big(\dom(\gamma)\big)>0$. Then there exists a compact set $K\in \mathrm{B}_{\mu}^+(\X)$ and a fragment plan $\mathbb{P}$ on $(\X,\mu)$ with $\mathbb{P}^{\#}(K)>0$ such that $\mathbb{P}\big(\Fr_{\mathrm{bL}}(\X)\backslash \G|_K\big)=0$.
    \begin{proof}
        While the proof is provided in \cite[Lemma 4.2]{BEBS24}, let us give some necessary brief explanation. In the presented lemma, there are two alternatives. The first one, under our assumptions, is impossible, as can be verified via \cite[Lemma 2.7]{BEBS24}, so the second one must take place. The corresponding statement, as it is in \cite[Lemma 4.2]{BEBS24}, is weaker than what we put in our formulation. Nevertheless, the very beginning of the proof therein factually demonstrates the validity of our conclusion as well.
    \end{proof}
\end{lem}

We now return to \cref{lem:ModToPlan}. The driving strategy of the corresponding reasoning below essentially mimics that of \cite[Proposition 4.3]{BEBS24}, where the authors verify a statement of a quite similar flavor.
\begin{proof}[Proof of \cref{lem:ModToPlan}]
    To begin with, we find a closed set $C\in \mathrm{B}_{\m}^+(Q)$ and put
   \begin{equation}
        \G_C\coloneqq \Bigg\{\gamma\in \Fr_{\mathrm{L}}(\X) \biggm|\mathcal{L}^1\bigg(\Big\{t\in \gamma^{-1}(C)\bigm| \ms^{p}_{\bm{f}\circ \gamma}(t)< \zeta\ms_{\gamma}(t)\Big\}\bigg)>0\Bigg\}.
    \end{equation}
    In light of \cref{def:GeoDisFac}, from \eqref{eq:ModToPlan0} it immediately follows that $\Mod^{\m}_{\infty}(\G_C)>0$. Then, as the space $\X$ is separable, and hence Lindel{\"o}f, and the measure $\m$ is locally finite, we can find an increasing family $(O_l)_{l\in \mathbb{N}}$ of open subsets of $\X$ such that $\m(O_l)<+\infty$ for each $l\in \mathbb{N}$ and
    \begin{equation}
        \bigcup\limits_{l\in \mathbb{N}} O_l=\X.
    \end{equation}
    Putting
    \begin{equation}
        \G_{C,l}\coloneqq \G_C\cap \Fr(O_l),\qquad \text{$l\in \mathbb{N}$},
    \end{equation}
    we can easily deduce that
    \begin{equation}
        \G_C=\bigcup\limits_{l\in \mathbb{N}} \G_{C,l}.
    \end{equation}
    As a consequence of that, and from the subadditivity of $\Mod_{\infty}^{\m}$, we get the existence of $l_0\in \mathbb{N}$ such that it holds, with $\G\coloneqq \G_{C,l_0}$, that
    \begin{equation}
        \Mod_{\infty}^{\m}(\G)>0.
    \end{equation}
    We then consider the measure $\mu\coloneqq \m_{\rmes O_{l_0}}$, which can be seen to be finite and Borel, and observe easily that, as $\G\subseteq \Fr(O_{l_0})$, we have
    \begin{equation}\label{eq:ModToPlan2}
        \Mod^{\mu}_{\infty}(\G)=\Mod_{\infty}^{\m}(\G)>0.
    \end{equation}
    This reasoning was necessary to reduce everything to the case of a finite underlying measure.

    Next, for each $n\in \mathbb{N}$, we let $\G(n)$ denote the family of all $\gamma\in \Fr(\X)$ such that
    \begin{equation}\label{eq:ModToPlan3}
        \mathcal{L}^1\big(\dom(\gamma)\big)\geq \frac{1}{n},
    \end{equation}
    which turns out to be closed as a subset of $\Fr(\X)$. After that, as the measure $\mu$ is finite and Borel on a Polish space, we can find an increasing family $(K_m)_{m\in \mathbb{N}}$ of compact subsets of $\X$ such that, with
    \begin{equation}
        Q_0\coloneqq \bigg(\X\Big\backslash \bigcup\limits_{m\in \mathbb{N}} K_m\bigg),
    \end{equation}
    we have $\mu(Q_0)=0$. Finally, for all $n,m\in \mathbb{N}$, we let $\G^n_m$ be the family of all $\gamma\in \G(n)\cap \Fr_{\mathrm{bL}}^n(C\cap K_m)$ such that
    \begin{gather}
      p\big(\bm{f}(\gamma_{t'})-\bm{f}(\gamma_t)\big)\leq \frac{n\zeta}{n+1}\dX(\gamma_t,\gamma_{t'})\qquad \text{for all $t,t'\in \dom(\gamma)$},\label{eq:ModToPlan4}\\
      \dom(\gamma)\subseteq [-n,n]\label{eq:ModToPlan5}.
    \end{gather}
    Reasoning as for the Arzel{\'a}–Ascoli theorem, one can verify, using the relevant definitions, that the families $\G^n_m$, $n,m\in \mathbb{N}$, are compact as subsets of $\Fr(\X)$.

    We now fix $\gamma\in \big(\G\backslash \Gamma^+_{Q_0}(\X)\big)$ and claim the following: there are $n,m\in \mathbb{N}$ such that the fragment $\gamma$ contains a subfragment lying in $\G^n_m$.

    First, as $\gamma\in \G_C$, we can clearly find $\epsilon\in (0,1)$ and a compact set $T\subseteq \gamma^{-1}(C)$ with $\mathcal{L}^1(T)>0$ such that
    \begin{equation}\label{eq:ModToPlan7}
        \ms^{p}_{\bm{f}\circ \gamma}(t)< \epsilon\zeta\ms_{\gamma}(t)\qquad \text{for any $t\in T$}.
    \end{equation}
    In turn, since $\gamma\in \Fr_{\mathrm{L}}(\X)$ and $\gamma\not\in \Gamma_{Q_0}^+(\X)$, we know that
    \begin{equation}
        \int\limits_{\gamma} \1_{Q_0}=0,
    \end{equation}
    whence, by \cref{prop:ArForm}, we easily get that
    \begin{equation}
        \ms_{\gamma}(t)=0\qquad \text{for $\mathcal{L}^1$-a.e. $t\in \gamma^{-1}(Q_0)$},
    \end{equation}
    which, together with \eqref{eq:ModToPlan7}, implies that there is $m\in \mathbb{N}$ such that, with $T'\coloneqq T\cap \gamma^{-1}(K_m)$, we have $\mathcal{L}^1(T')>0$ and
    \begin{equation}
        \ms_{\gamma}(t)>0\qquad \text{for $\mathcal{L}^1$-a.e. $t\in T'$}.
    \end{equation}
    For the sake of simplicity, we restrict further to a compact set $T''\subseteq T'$ without isolated points such that one has $\mathcal{L}^1(T'')>0$ and the condition above, with $T''$ replaced by $T'$, still also holds. As the next step, we recall that
    \begin{gather}
        \ms_{\gamma}(t)=\lim\limits_{\substack{t'\to t \\ t'\in \dom(\gamma)}}\frac{\dX(\gamma_t,\gamma_{t'})}{|t'-t|},\qquad \text{for $\mathcal{L}^1$-a.e. $t\in T''$},\\
        \ms^{p}_{\bm{f}\circ \gamma}(t)=\lim\limits_{\substack{t'\to t \\ t'\in \dom(\gamma)}}\frac{p\big(\bm{f}(\gamma_{t'})-\bm{f}(\gamma_{t})\big)}{|t'-t|},\qquad \text{for $\mathcal{L}^1$-a.e. $t\in T''$}.
    \end{gather}
    As the maps $\gamma$ and $\bm{f}\circ \gamma$ are continuous, with the help of the Egorov theorem, we can get the existence of a compact set $T'''\subseteq T''$ with $\mathcal{L}^1(T''')>0$ such that
    \begin{gather}
    \inf\limits_{t\in T'''}\ms_{\gamma}(t)>0,\\
        \lims\limits_{\tau \to 0}\sup\limits_{\substack{t,t'\in T'''\\ 0<|t'-t|\leq \tau}}\bigg|\frac{\dX(\gamma_t,\gamma_{t'})}{|t'-t|}-\ms_{\gamma}(t)\bigg|=0,\\
        \lims\limits_{\tau \to 0}\sup\limits_{\substack{t,t'\in T'''\\ 0<|t'-t|\leq \tau}}\Bigg|\frac{p\big(\bm{f}(\gamma_{t'})-\bm{f}(\gamma_{t})\big)}{|t'-t|}-\ms^{p}_{\bm{f}\circ \gamma}(t)\Bigg|=0.
    \end{gather}
With this in mind, it is possible to find $\tau_0\in \R_{>0}$, $\epsilon_0\in (\epsilon,1)$, $L_0\in \R_{>0}$ such that
    \begin{equation}\label{eq:ModToPlan6}
    \begin{gathered}
       \text{$p\big(\bm{f}(\gamma_{t'})-\bm{f}(\gamma_{t})\big)\leq \epsilon_0\zeta\dX(\gamma_t,\gamma_{t'})\quad $ and $\quad  \dX(\gamma_t,\gamma_{t'})\geq \frac{1}{L_0}|t'-t| $}\\
       \text{for all $t,t'\in T'''$ with $|t'-t|\leq \tau_0$}.
        \end{gathered}
    \end{equation}
  Finally, we find $t_0\in T'''$ such that, with
    \begin{equation}
        T_0\coloneqq T'''\cap [t_0,t_0+\tau_0],
    \end{equation}
    we have $\mathcal{L}^1(T_0)>0$, which is obviously possible. Choosing $n\in \mathbb{N}$ appropriately, in correspondence with \eqref{eq:ModToPlan3}, \eqref{eq:ModToPlan4}, and \eqref{eq:ModToPlan6}, we can guarantee that $\gamma|_{T_0}\in \G^n_m$, as desired.

    The stated claim is thus verified, whence, by \cref{prop:SubFrag}, we get, again using the subadditivity of $\Mod_{\infty}^{\mu}$, that
    \begin{equation}
        \Mod^{\mu}_{\infty}(\G)\leq \Mod^{\mu}_{\infty}\Bigg(\bigcup\limits_{n,m\in \mathbb{N}} \G^n_m\Bigg)\leq \sum\limits_{n,m\in \mathbb{N}}\Mod^{\mu}_{\infty}(\G^n_m),
    \end{equation}
    from which, together with \eqref{eq:ModToPlan2}, there follows the existence of $m_0,n_0\in \mathbb{N}$ such that
    \begin{equation}
        \Mod^{\mu}_{\infty}(\G^{n_0}_{m_0})>0.
    \end{equation}
    As the family $\G^{n_0}_{m_0}$ is compact and contains only ``visible'' biLipschitz fragments, we are able to apply \cref{lem:RainWater} to it, which gives us the existence of a compact set $K\in \mathrm{B}_{\mu}^+(\X)$ and a fragment plan $\mathbb{P}_0$ on $(\X,\mu)$ with $\mathbb{P}_0^{\#}(K)>0$ such that $\mathbb{P}_0\big(\Fr_{\mathrm{bL}}(\X)\backslash \G^{n_0}_{m_0}|_K\big)=0$. We then observe that, by the very definition of fragment plans, the measure $\mathbb{P}_0$ is also a fragment plan on $(\X,\m)$. 
    
    As the finalizing step, we claim the following: there is $n\in \mathbb{N}$ such that
    \begin{equation}
        \mathbb{P}_0\big(\G(n)\big)>0.
    \end{equation}
    Assuming the contrary, we immediately get that
    \begin{equation}
        \mathcal{L}^1\big(\dom(\gamma)\big)=0\qquad \text{for $\mathbb{P}_0$-a.e. $\gamma\in \Fr_{\mathrm{bL}}(\X)$}.
    \end{equation}
    At the same time, from the condition above, we easily obtain, with the use of \eqref{eqref:ArForm0}, \eqref{eq:LinIntInjFr}, and \eqref{eq:BarCMeas}, that
    \begin{equation}
        \mathbb{P}_0^{\#}(\X)=\int\limits_{\Fr_{\mathrm{bL}}(\X)} \mathcal{H}^1_{\X}\big(\im(\gamma)\big)\D \mathbb{P}_0(\gamma)=0,
    \end{equation}
    which contradicts the fact that $\mathbb{P}_0^{\#}(\X)>0$. So, a number with the required property indeed exists; let $\widehat{n}\in \mathbb{N}$ be one of such.

    To conclude the proof, it remains only to put $\mathbb{P}\coloneqq {\mathbb{P}_0}_{\rmes \G(\widehat{n})}$, which yields the desired nonzero fragment plan on $(\X,\m)$, and witness, on the base of what we obtained previously, that one has $\mathbb{P}\Big(\Fr_{\mathrm{bL}}(\X)\big\backslash \big(\G_{m_0}^{n_0}|_K \cap \G(\widehat{n})\big)\Big)=0$ and $\big(\G_{m_0}^{n_0}|_K \cap \G(\widehat{n})\big)\subseteq \G_C$, that is, the measure $\mathbb{P}$ is nonzero and is concentrated on $\G_C$, which can be seen to be exactly what we wanted. The proof is thus fully complete.
    \end{proof}

\subsection{Relations between distortion factors} Here we aim at obtaining a cornerstone connection between the two introduced kinds of distortion factors.

Until the end of the subsection, we fix a locally finite, Borel measure $\m$ on $\X$.

Speaking loosely, what we established in \cref{ss:DefMEst} is that any abstract finite-dimensional normed module admits a system in its dual with some uniform controls of a specific kind. Our eventual goal, from the perspective of \cref{theo:MainRes} and \cref{theo:MainResEucl}, is to transfer this result appropriately to the setting of metric measure spaces and Lipschitz functions thereon. As the central ingredient for that, we present the following intermediate lemma, which seems to be of independent importance.
\begin{lem}\label{lem:GeoDisAnDis}
    Let $p\in \mathsf{P}^{\infty}$, $\bm{f}\in \Mat^{\infty}\big(\LIP(\X)\big)$. Then the following estimate holds:
    \begin{equation}\label{eq:GeoDisAnDis0}
        \GDis_{p}[\bm{f}]\geq \ADis_{p}\big[\D_{\m} \bm{f}\big]\qquad \text{$\m$-a.e. on $\X$}.
    \end{equation}
\end{lem}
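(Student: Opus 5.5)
We argue by contradiction, using \cref{lem:ModToPlan} to convert the failure of \eqref{eq:GeoDisAnDis0} into a fragment plan, and then using the plan to build a Weaver derivation that violates the definition of $\ADis_p[\D_{\m}\bm{f}]$. If \eqref{eq:GeoDisAnDis0} fails, there are rationals $0<\zeta<\zeta'$ and a set $Q\in \mathrm{B}^+_{\m}(\X)$ such that $\GDis_p[\bm{f}]<\zeta$ and $\ADis_p[\D_{\m}\bm{f}]>\zeta'$ $\m$-a.e. on $Q$. By \cref{lem:ModToPlan} there is a nonzero fragment plan $\mathbb{P}$ on $(\X,\m)$ concentrated on biLipschitz fragments $\gamma$ for which
\begin{equation}
T_\gamma\coloneqq\Big\{t\in\gamma^{-1}(Q)\bigm| \ms^{p}_{\bm{f}\circ \gamma}(t)< \zeta\ms_{\gamma}(t)\Big\}
\end{equation}
has positive $\mathcal{L}^1$-measure. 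Replacing $\mathbb{P}$ by the plan that restricts each $\gamma$ to $T_\gamma$, or by restricting $\overline{\mathbb{P}}$ to the Borel set $\mathcal{G}=\{(\gamma,t)\mid t\in T_\gamma\}$, we may assume that the barycenter $\mathbb{P}^{\#}$ charges only $Q$ and is nonzero.

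Next we build a derivation from the plan, following the standard construction of \cite[Section 4]{BEBS24}. Set
\begin{equation}
\Dif(g)(x)\coloneqq \int \frac{\big(\partial(g\circ\gamma)\big)(t)}{\ms_\gamma(t)}\,\D\overline{\mathbb{P}}_x(\gamma,t),
\end{equation}
multiplied by $\1_{Q'}$ for a set $Q'\subseteq Q$ of positive $\m$-measure on which the disintegration has total mass one. The factor $\ms_\gamma$ in the denominator is positive $\overline{\mathbb{P}}$-a.e. We have $|\Dif(g)|\le \mathrm{Lip}(g)$, and the Leibniz rule holds pointwise along fragments. For weak-$*$ continuity, pointwise convergence of $1$-Lipschitz functions gives convergence of the integrals of $\partial(g_n\circ\gamma)$ against test functions, by integrating by parts along fragments as in \cite{BEBS24}. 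Hence $\Dif\in\mathcal{X}_{\m}$. Moreover $|\Dif|_{\mathcal{X}_{\m}}=1$ on $Q'$: the upper bound is clear, and the lower bound uses the functions $g_l$ with $\ms_\gamma=\sup_l\partial(g_l\circ\gamma)$. Unlike for a single curve, the supremum over $l$ does not pass through the integral in $x$. This is handled by localizing, i.e. restricting $\mathbb{P}$ to fragments on which a fixed $g_l$ almost realizes the speed, which makes the lower bound $|\Dif|\geq 1-\epsilon$ hold on a positive-measure set; that suffices.

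Finally we compare with $\ADis$. On the one hand, by convexity of $p$ (Jensen applied to the averaging measure $\overline{\mathbb{P}}_x$),
\begin{equation}
p\big(\D_{\m}\bm{f}(\Dif)\big)(x)=p\big(\Dif(\bm f)(x)\big)\le \int \frac{\ms^{p}_{\bm{f}\circ\gamma}(t)}{\ms_\gamma(t)}\,\D\overline{\mathbb{P}}_x<\zeta .
\end{equation}
On the other hand, the definition of $\ADis$ gives $p\big(\D_{\m}\bm f(\Dif)\big)\ge \ADis_p[\D_{\m}\bm f]\,|\Dif|_{\mathcal{X}_{\m}}\geq\zeta'(1-\epsilon)$. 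Choosing $\epsilon$ small yields a contradiction.

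The main obstacle is the previous step: verifying that the plan-induced map is a genuine Weaver derivation, in particular its weak-$*$ continuity, together with the lower bound on its norm. I would import both from \cite{BEBS24}, where exactly this construction of derivations from fragment plans is carried out.
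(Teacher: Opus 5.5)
Your proposal is correct and follows the paper's route: argue by contradiction, use \cref{lem:ModToPlan} to obtain a fragment plan, build a derivation from it via \cite[Lemma 7.1]{BEBS24} (the paper's \cref{lem:ConstrDeriv}) restricted to pairs $(\gamma,t)$ at which a fixed $g_l$ almost realizes the speed, and compare with the defining inequality of $\ADis_p[\D_{\m}\bm{f}]$. The paper only packages the last two steps as \cref{lem:CurvToDeriv} and \cref{lem:FromDerivToCur}, and uses a $(1-2\varepsilon)$ margin where you use the gap $\zeta<\zeta'$. Of your two options, use the second one (taking $\chi=\1_{\mathcal G}$ for the Borel set $\mathcal G$ of pairs), since restricting each $\gamma$ to $T_\gamma$ does not produce fragments ($T_\gamma$ need not be compact). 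Normalization is not needed either: the lower bound on $|\Dif|_{\mathcal{X}_{\m}}$ and the upper bound on $p(\Dif(\bm f))$ both carry the same factor $\overline{\mathbb{P}}_x(\mathcal G)$, so the contradiction holds wherever this factor is positive.
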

\noindent It is worth noting that the above statement holds regardless of any dimensional bounds on the underlying space.

\begin{rem}
    Informally, the estimate in \eqref{eq:GeoDisAnDis0} holds because (as will be elaborated below) any fragment plan gives rise to a certain Weaver derivation. Thus, one would expect that if it was true that any such derivation induced a fragment plan with some satisfactory properties, then the opposite estimate in \eqref{eq:GeoDisAnDis0} would also take place. At this stage, however, we do not know whether this is the case. As it does not affect our exposition, we omit any further discussion on this matter.
\end{rem}

In order to prove \cref{lem:GeoDisAnDis}, we need to verify several facts first, which, while being auxiliary, contain all the essence lying behind the corresponding statement.

Below we provide a lemma allowing one to construct explicitly a special Weaver derivation from a given fragment plan.
\begin{lem}\label{lem:ConstrDeriv}
    Let $\mathbb{P}$ be a fragment plan on $(\X,\m)$, let $\chi\colon \overline{\Fr}_{\mathrm{bL}}(\X)\to \R$ be a bounded, Borel function. Then there exists a unique derivation $\Dif\in \mathcal{X}_{\m}$ such that, for any $f\in \LIP(\X)$, one has
    \begin{equation}\label{eq:ConstrDeriv0}
        \big(\Dif(f)\big)(x)=\int\limits_{\overline{\Fr}_{\mathrm{bL}}(\X)}\chi(\gamma,t) \frac{\big(\partial(f\circ \gamma)\big)(t)}{\ms_{\gamma}(t)}\D \overline{\mathbb{P}}_x(\gamma,t)\qquad \text{for $\m$-a.e. $x\in \X$}.
    \end{equation}
  \begin{proof}
      The proof is given in \cite[Lemma 7.1]{BEBS24}.
  \end{proof}
\end{lem}

In the next (rather technical) proposition, we demonstrate how to build, on the base of a given system of Lipschitz functions, a Weaver derivation of a ``sufficiently large'' norm that respects a certain ``visible'' (by some fragment plan) relation between the suitable metric speeds.
\begin{prop}\label{lem:CurvToDeriv}
    Let $Q\in \mathrm{B}^+_{\m}(\X)$, $p\in \mathsf{P}^{\infty}$, $\bm{f}\in \Mat^{\infty}\big(\LIP(\X)\big)$, $\zeta\in \R_{> 0}$. Consider the family
    \begin{equation}\label{eq:CurvToDeriv0}
       \mathcal{G}_Q\coloneqq \Big\{(\gamma,t)\in \ev^{-1}(Q)\cap \overline{\Fr}_{\mathrm{bL}}(\X)\bigm| \ms^{p}_{\bm{f}\circ \gamma}(t)\leq \zeta \ms_{\gamma}(t)\Big\}.
    \end{equation}
    Let $\mathbb{P}$ be a fragment plan on $(\X,\m)$ with $\overline{\mathbb{P}}(\mathcal{G}_Q)>0$. Then, given any $\varepsilon\in (0,1)$, there exists $\Dif_{\varepsilon}\in \mathcal{X}_{\m}$ such that, for some $Q_{\varepsilon}\in \mathrm{B}^+_{\m}(Q)$, the following properties hold:
    \begin{gather}
        |\Dif_{\e}|_{\mathcal{X}_{\m}}\geq 1-\varepsilon\qquad \text{$\m$-a.e. on $Q_{\varepsilon}$};\label{eq:CurvToDeriv01}\\
        p\big(\Dif_{\e}(\bm{f})\big)\leq \zeta \qquad \text{$\m$-a.e. on $Q_{\varepsilon}$}\label{eq:CurvToDeriv02}.
    \end{gather}
    \begin{proof}
    We begin by noticing that the family $\mathcal{G}_Q$ is Borel. Then, as mentioned in \cref{ss:MgCf}, we can find $(g_l)_{l\in \mathbb{N}}\subseteq \LIP_1(\X)$ such that, for every $\gamma\in \Fr_{\mathrm{L}}(\X)$, we have
    \begin{equation}\label{eq:CurvToDeriv1}
        \sup\limits_{l\in \mathbb{N}}\big(\partial(g_l\circ \gamma)\big)(t)=\ms_{\gamma}(t)\qquad \text{for $\mathcal{L}^1$-a.e. $t\in \dom(\gamma)$}.
    \end{equation}
    Now we fix $\varepsilon\in (0,1)$ and, for each $l\in \mathbb{N}$, consider the family
    \begin{equation}\label{eq:CurvToDeriv2}
        \mathcal{G}_{\varepsilon,l}\coloneqq \Big\{(\gamma,t)\in \mathcal{G}_Q\bigm| \big(\partial(g_l\circ \gamma)\big)(t)\geq (1-\varepsilon) \ms_{\gamma}(t)\Big\},
    \end{equation}
    which can also be seen to be Borel. With the use of the property in \eqref{eq:ExtPlan}, it is then possible to derive from \eqref{eq:CurvToDeriv1} and \eqref{eq:CurvToDeriv2} that
    \begin{equation}
        \overline{\mathbb{P}}\bigg(\mathcal{G}_Q\Big\backslash \bigcup\limits_{l\in \mathbb{N}} \mathcal{G}_{\varepsilon,l}\bigg)=0,
    \end{equation}
    whence we easily get the existence of $l_0\in \mathbb{N}$ with $\overline{\mathbb{P}}(\mathcal{G}_{\varepsilon,l_0})>0$. For brevity we put $\mathcal{G}\coloneqq \mathcal{G}_{\varepsilon,l_0}$.

        Let $\mathcal{D}_{\varepsilon}$ be the derivation as in \cref{lem:ConstrDeriv} built with respect to $\mathbb{P}$ and $\1_{\mathcal{G}}$. We then immediately notice that
        \begin{equation}\label{eq:CurvToDeriv3}
           \ev_{\#}\big(\overline{\mathbb{P}}_{\rmes \mathcal{G}}\big)(\X)>0.
        \end{equation}
       Also, we observe that
        \begin{equation}\label{eq:CurvToDeriv4}
             \ev_{\#}\big(\overline{\mathbb{P}}_{\rmes \mathcal{G}}\big)\big(\X\backslash Q\big)=0.
        \end{equation}
        After that, as it clearly holds that 
        \begin{equation}
          \ev_{\#}\big(\overline{\mathbb{P}}_{\rmes \mathcal{G}}\big)\ll \mathbb{P}^{\#}\ll \m,
          \end{equation}
          we let $\rho$ and $\rho_{\mathcal{G}}$ denote Borel $\m$-representatives, respectively, of $\dfrac{\D \mathbb{P}^{\#}}{\D \m}$ and of $\dfrac{\D \Big(\ev_{\#}\big(\overline{\mathbb{P}}_{\rmes \mathcal{G}}\big)\Big)}{\D \m}$. With this, we put $Q_{\e}\coloneqq \rho_{\mathcal{G}}^{-1}(\R_{>0})$. From \eqref{eq:CurvToDeriv3} and \eqref{eq:CurvToDeriv4} it thus follows that $Q_{\e}\in \mathrm{B}^+_{\m}(Q)$, as desired.
          
With all what is said above, exploiting \eqref{eq:ConstrDeriv0} and \eqref{eq:CurvToDeriv2}, we can check that
        \begin{equation}
        \begin{gathered}
            |\Dif_{\varepsilon}|_{\mathcal{X}_{\m}}(x)\geq \big(\Dif_{\varepsilon}(g_{l_0})\big)(x)=\\
            =\int\limits_{\mathcal{G}} \frac{\big(\partial(g_{l_0}\circ \gamma)\big)(t)}{\ms_{\gamma}(t)}\D \overline{\mathbb{P}}_x(\gamma,t)\geq (1-\varepsilon)\qquad \text{for $\m$-a.e. $x\in Q_{\varepsilon}$},
            \end{gathered}
        \end{equation}
        which is the very property in \eqref{eq:CurvToDeriv01}. On the other hand, bearing \eqref{eq:ConstrDeriv0} and \eqref{eq:CurvToDeriv0} in mind, we can write, with the use of standard properties of the vector-valued integration, that
        \begin{equation}
            p\big(\Dif_{\varepsilon}(\bm{f})\big)(x)\leq \int\limits_{\mathcal{G}}\frac{p\Big(\big(\partial(\bm{f}\circ \gamma)\big)(t)\Big)}{\ms_{\gamma}(t)}\D \overline{\mathbb{P}}_x(\gamma,t)\leq \zeta\qquad \text{for $\m$-a.e. $x\in Q_{\varepsilon}$},
        \end{equation}
        which gives us exactly the property in \eqref{eq:CurvToDeriv02}. This finishes the proof.
    \end{proof}
    
\end{prop}

The lemma below allows one to switch from estimates on a given system of Lipschitz functions in terms of all possible Weaver derivations to corresponding estimates in terms of the metric speeds as in the previous lemma. In other words, exactly here we pass from algebraical relations to geometrical ones.
\begin{lem}\label{lem:FromDerivToCur}
    Let $Q\in \mathrm{B}(\X)$, $p\in \mathsf{P}^{\infty}$, $\bm{f}\in \Mat^{\infty}\big(\LIP(\X)\big)$, $\zeta\in \R_{> 0}$. Suppose it holds, for any $\Dif\in \mathcal{X}_{\m}$, that
    \begin{equation}\label{eq:FromDerivToCur0}
        p\big(\Dif(\bm{f})\big)\geq \zeta |\Dif|_{\mathcal{X}_\m}\qquad \text{$\m$-a.e. on $Q$}.
    \end{equation}
    Let $\mathbb{P}$ be a fragment plan on $(\X,\m)$. Then it holds, for $\mathbb{P}$-a.e. $\gamma\in \Fr_{\mathrm{bL}}(\X)$, that
    \begin{equation}\label{eq:FromDerivToCur00}
        \ms^{p}_{\bm{f}\circ \gamma}(t)\geq \zeta \ms_{\gamma}(t)\qquad \text{for $\mathcal{L}^1$-a.e. $t\in \gamma^{-1}(Q)$}.
    \end{equation}
    \begin{proof}
       For every $\varepsilon\in (0,1)$ consider the family
        \begin{equation}
            \mathcal{G}_{\varepsilon}\coloneqq \bigg\{(\gamma,t)\in \ev^{-1}(Q)\cap \overline{\Fr}_{\mathrm{bL}}(\X) \Bigm| \ms^{p}_{\bm{f}\circ \gamma}(t)\leq (1-2\varepsilon) \zeta \ms_{\gamma}(t)\bigg\},
        \end{equation}
        which can be seen to be Borel. The proof goes by contradiction, so we suppose the conclusion is false. From this, we can derive, with the help of \eqref{eq:ExtPlan}, that $\overline{\mathbb{P}}(\mathcal{G}_{\varepsilon})>0$ for some $\varepsilon\in (0,1)$. Let then $\Dif_{\varepsilon}$ and $Q_{\varepsilon}$ be built with respect to $Q$, $p$, $\bm{f}$, $(1-2\varepsilon) \zeta$, $\mathbb{P}$, and $\varepsilon$ as in \cref{lem:CurvToDeriv}. Now we juxtapose \eqref{eq:CurvToDeriv01}, \eqref{eq:CurvToDeriv02}, and \eqref{eq:FromDerivToCur0}, from which we get that
        \begin{equation}
            \zeta(1-\varepsilon)\leq \zeta|\Dif_{\varepsilon} |_{\mathcal{X}_{\m}}\leq p\big(\Dif_{\varepsilon}(\bm{f})\big)\leq (1-2\varepsilon)\zeta\qquad \text{$\m$-a.e. on $Q_{\varepsilon}$}.
        \end{equation}
        As this is impossible due to the fact that $\m(Q_{\e})>0$, we can conclude the proof.
    \end{proof}
\end{lem}

We have all the tools to prove \cref{lem:GeoDisAnDis}. To this end, we recall \cref{def:DistCoef}.
\begin{proof}[Proof of \cref{lem:GeoDisAnDis}]
    Arguing by contradiction, we can find $\zeta\in \R_{>0}$ and $Q\in \mathrm{B}_{\m}^+(\X)$ such that
    \begin{equation}\label{eq:GeoDisAnDis1}
        \GDis_{p}[\bm{f}]<\zeta\leq \ADis_{p}\big[\D_{\m} \bm{f}\big]\qquad \text{$\m$-a.e. on $Q$}.
    \end{equation}
    From the left estimate in \eqref{eq:GeoDisAnDis1}, with the use of \cref{lem:ModToPlan}, we derive the existence of a nonzero fragment plan $\mathbb{P}$ on $(\X,\m)$ such that, for $\mathbb{P}$-a.e. $\gamma\in \Fr_{\mathrm{bL}}(\X)$, one has
    \begin{equation}
       \mathcal{L}^1\bigg(\Big\{t\in \gamma^{-1}(Q)\bigm| \ms^{p}_{\bm{f}\circ \gamma}(t)< \zeta\ms_{\gamma}(t)\Big\}\bigg)>0.
    \end{equation}
    At the same time, by \cref{def:DistCoef}, the right estimate in \eqref{eq:GeoDisAnDis1} yields for any $\Dif\in \mathcal{X}_{\m}$ that
    \begin{equation}
        p\big(\Dif(\bm{f})\big)=p\big((\D_{\m}\bm{f})(\Dif)\big)\geq \zeta|\Dif|_{\mathcal{X}_{\m}}\qquad \text{$\m$-a.e. on $Q$},
    \end{equation}
    whence, by \cref{lem:FromDerivToCur}, it holds, for $\mathbb{P}$-a.e. $\gamma\in \Fr_{\mathrm{bL}}(\X)$, that
    \begin{equation}
         \ms^{p}_{\bm{f}\circ \gamma}(t)\geq \zeta \ms_{\gamma}(t)\qquad \text{for $\mathcal{L}^1$-a.e. $t\in \gamma^{-1}(Q)$}.
    \end{equation}
    The obtained facts clearly contradict each other, as desired. The proof is thus complete.
\end{proof}

\subsection{Main characterizations}\label{ss:MainCh}
In this subsection we combine all the previously-obtained statements into our central results.

Throughout the remaining part, a locally finite, Borel measure $\m$ on $\X$ is assumed fixed.

We start with the lemma below, where we present a crucial relation between the algebraic and geometric distortion factors for the families of the natural ``unit norms''. For this, we recall \cref{theo:ColMEstAnDis}, as well as various terminology introduced earlier.
\begin{lem}\label{lem:ColMainChar}
    Let $E\in \mathrm{P}_{\m}(\X)$, $d\in \mathbb{N}_0$. Suppose $\dim_{\mathcal{X}_{\m}}(E)\leq d$. Let $p,q\in \mathsf{P}^d$. Consider the families
    \begin{gather}
    \bm{\Phi}_q\coloneqq \big[\Mat^d(\mathcal{X}_{\m}^{\star}), q\big]_{\leq 1},\\
        \bm{F}_q\coloneqq \Big\{\bm{f}\in \Mat^d\big(\LIP(\X)\big)\bigm| \bm{f}\in \LIP_1\big(\X, \mathbb{R}^d_q\big)\Big\}.
    \end{gather}
    Then the following estimate holds:
        \begin{equation}\label{eq:ColMainChar0}
            \GDis_{p}[\bm{F}_q]\geq \ADis_{p}[\bm{\Phi}_q]\qquad \text{$\m$-a.e. on $E$}.
        \end{equation}
    \begin{proof}
        To begin with, let $\bm{\Phi}$ denote the collection of all elements in $\Mat^d(\mathcal{X}_{\m}^{\star})$ of the form
        \begin{equation}
            \sum\limits_{n\in \overline{1,N}}\1_{E_n} \D_{\m} \bm{f}_n,
        \end{equation}
        where $N\in \mathbb{N}$, $(E_n)_{n\in \overline{1,N}}\subseteq \P_{\m}(\X)$, $(\bm{f}_n)_{n\in \overline{1,N}}\subseteq \bm{F}_q$, with the family $(E_n)_{n\in \overline{1,N}}$ being disjoint. We then make an easy observation that the set $\bm{\Phi}$ is disked and stable. Moreover, it can also be seen that $\bm{\Phi}\subseteq \bm{\Phi}_q$.

        Now we fix $\bm{\lambda}\in \R^d$ with $q^*(\bm{\lambda})=1$ and put
        \begin{equation}\label{eq:ColMainChar1}
            \Phi_{\bm{\lambda}}\coloneqq \Big\{\bm{\lambda}\cdot \bm{\phi} \mid \bm{\phi}\in \bm{\Phi}\Big\}.
        \end{equation}
        We notice first that
        \begin{equation}
            \Phi_{\bm{\lambda}}\subseteq [\mathcal{X}_{\m}]_{\leq 1}
        \end{equation}
        and claim in turn that
        \begin{equation}
            \Phi_{\bm{\lambda}}\supseteq \D_{\m}\big(\LIP_1(\X)\big).
        \end{equation}
        For that, we find $\bm{\xi}\in \R^d$ with $q(\bm{\xi})=1$ such that $\bm{\lambda}\cdot \bm{\xi}=1$, which is clearly possible. Picking then $f\in \LIP_1(\X)$ and putting $\bm{f}\coloneqq \bm{\xi}f$, we easily see that
        \begin{equation}
            \bm{\lambda}\cdot \D_{\m}\bm{f}=\D_{\m} f,
        \end{equation}
        as well as that $\bm{f}\in \LIP_1\big(\X, \mathbb{R}^d_q\big)$, as desired. Thus, by the very definition of $|\cdot|_{\mathcal{X}_{\m}}$, we deduce that the set in \eqref{eq:ColMainChar1} is norming for $\mathcal{X}_{\m}$ over $\X$.

        With all said above, we are in a position to apply \cref{theo:ColMEstAnDis}, which gives us the following:
        \begin{equation}\label{eq:ColMainChar2}
            \ADis_{p}[\bm{\Phi}_q]=\ADis_{p}[\bm{\Phi}]\qquad \text{$\m$-a.e. on $E$}.
        \end{equation}
        On the other hand, by \cref{lem:GeoDisAnDis}, we know that
        \begin{equation}\label{eq:ColMainChar3}
            \GDis_{p}[\bm{F}_q]\geq \ADis_{p}\big[\D_{\m}(\bm{F}_q)\big]\qquad \text{$\m$-a.e. on $\X$}.
\end{equation}
In addition, exploiting the locality of algebraic distortion factors, we can get that
\begin{equation}\label{eq:ColMainChar4}
            \ADis_{p}\big[\D_{\m}(\bm{F}_q)\big]=\ADis_{p}[\bm{\Phi}]\qquad \text{$\m$-a.e. on $\X$}.
\end{equation}
Combining \eqref{eq:ColMainChar2}, \eqref{eq:ColMainChar3}, and \eqref{eq:ColMainChar4}, we arrive exactly at the condition in \eqref{eq:ColMainChar0}.

The proof is thus fully complete.
    \end{proof}
\end{lem}

A straightforward application of \cref{theo:MainEstADis} to the lemma above results in the following theorem that represents the main universal bounds on geometric distortion factors, the byproduct of which can be exactly observed in \cref{theo:MainRes} and \cref{theo:MainResEucl}. This statement should be treated as the conceptual core for the subsequent, more technical, steps.
\begin{theo}\label{theo:FinDimBoundsGDis}
    Let $E\in \mathrm{P}_{\m}(\X)$, $d\in \mathbb{N}_0$. Suppose $\dim_{\mathcal{X}_{\m}}(E)\leq d$. For every $q\in \mathsf{P}^d$, consider the collection
    \begin{equation}
        \bm{F}_q\coloneqq \Big\{\bm{f}\in \Mat^d\big(\LIP(\X)\big)\bigm| \bm{f}\in \LIP_1\big(\X, \mathbb{R}^d_q\big)\Big\}.
    \end{equation}
    Then the assertions below hold.
    \begin{enumerate}
        \item[$\rm I)$] One has
        \begin{equation}
            \GDis_{\mathsf{p}_1}[\bm{F}_{\mathsf{p_{\infty}}}]\geq 1 \qquad \text{$\m$-a.e. on $E$}.
        \end{equation}
        \item[$\rm II)$] If the space $(\X,\m)$ is infinitesimally Euclidean, then one has
         \begin{equation}
            \GDis_{\mathsf{p}_2}[\bm{F}_{\mathsf{p}_2}]\geq 1 \qquad \text{$\m$-a.e. on $E$}.
        \end{equation}
    \end{enumerate}
    \begin{proof}
        For both parts we just need to combine \cref{theo:MainEstADis} and \cref{lem:ColMainChar}, with the additional use of \cref{def:InfEucl} for the second part.
    \end{proof}
\end{theo}

Below we record a proposition that basically discloses \cref{def:GeoDisFac} in more explicit terms.
\begin{prop}\label{prop:GDisExpl}
    Let $Q\in \mathrm{B}(\X)$, $d\in \mathbb{N}_0$, $p\in \mathsf{P}^d$, $\bm{F}\subseteq \Mat^d\big(\LIP(\X)\big)$, $\zeta\in \R_{>0}$. Suppose one has
    \begin{equation}\label{eq:GDisExpl0}
        \GDis_{p}[\bm{F}]\geq  \zeta\qquad \text{$\m$-a.e. on $Q$}.
    \end{equation}
    Let $\epsilon\in [0,1)$. Then there exist a Borel $\m$-partition $(Q_{\epsilon,m})_{m\in \mathbb{N}}$ of $Q$ and a family $(\bm{f}_{\epsilon,m})_{m\in \Nat}\subseteq \bm{F}$ such that the following holds: for $\Mod_{\infty}^{\m}$-a.e. $\gamma\in \Fr_{\mathrm{L}}(\X)$ and for each $m\in \mathbb{N}$, one has
    \begin{equation}\label{eq:GDisExpl00}
        \ms^p_{\bm{f}_{\epsilon,m}\circ \gamma}(t)\geq \epsilon\zeta \ms_{\gamma}(t)\qquad \text{for $\mathcal{L}^1$-a.e. $t\in \gamma^{-1}(Q_{\epsilon,m})$}.
    \end{equation}
    \begin{proof}
        The proof lies in a rather routine application of all the relevant definitions, so we omit the corresponding details.
    \end{proof}
\end{prop}

In the next lemma we essentially glue together the systems emerging in the above proposition into a single map, together with implementing thereto some additional definitions, which gives us a control on the change of the lengths of curve fragments in terms of the corresponding geometric distortion factor.
\begin{lem}\label{lem:FromGDisToLen}
    Let $d\in \mathbb{N}_0$, $p,q\in \mathsf{P}^d$, $\bm{F}\subseteq \Mat^d\big(\LIP(\X)\big)\cap \LIP_1\big(\X,\mathbb{R}^d_q\big)$, let $\rho\colon \X\to \R_{\geq 0}$ be a Borel function with
    \begin{equation}
        \GDis_p[\bm{F}]\geq \rho\qquad \text{$\m$-a.e. on $\X$}.
    \end{equation}
    Let $\e\in (0,1)$. Then there exists a $1$-Luzin--Lipschitz, Borel map $\varphi_{\e}\colon \X\to \mathbb{R}^d_q$ such that
    \begin{equation}\label{eq:FromGDisToLen0}
        \Len_{p}(\varphi\circ \gamma)\geq (1-\varepsilon) \int\limits_{\gamma} \rho \qquad \text{for $\Mod_{\infty}^{\m}$-a.e. $\gamma\in \Fr_{\mathrm{r}}(\X)$}.
    \end{equation}
    \begin{proof}
To begin with, we put $\epsilon\coloneqq (1-\varepsilon)^{\frac{1}{2}}$ and find a Borel $\m$-partition $(Q_{\e,l})_{l\in \Nat}$ of $\X$ and a family $(\zeta_{\e,l})_{l\in \Nat}\subseteq \R_{\geq 0}$ such that
\begin{equation}
   \rho\geq \zeta_{\e,l}\geq \epsilon\rho\qquad \text{on $Q_{\e,l}$},
\end{equation}
which is obviously possible. Then, for each $l\in \Nat$, let $(Q_{\e,l,m})_{m\in \Nat}$ and $(\bm{f}_{\e,l,m})_{m\in \Nat}$ be built with respect to $Q_{\e,l}$, $\zeta_{\e,l}$, and $\epsilon$ as in \cref{prop:GDisExpl}. With this, we define the map $\varphi_{\e}\colon \X\to \R^d_q$ by
        \begin{equation}
            \varphi_{\e}\coloneqq \sum\limits_{l,m\in \Nat} \1_{Q_{\e,l,m}} \bm{f}_{\e,l,m},
        \end{equation}
        which can be seen to be well posed. From the premise it also immediately follows that the map $\varphi_{\e}$ is Borel and $1$-Luzin--Lipschitz, as desired. After that, we put
        \begin{equation}
            Q_0\coloneqq \bigg(\X\Big\backslash \bigcup\limits_{l,m\in \Nat}Q_{\e,l,m}\bigg),
        \end{equation}
        so that we have $\m(Q_0)=0$.

        Let now $\G_{\mathrm{L}}$ denote the family of all $\gamma\in \big(\Fr_{\mathrm{L}}(\X)\backslash \Gamma^+_{Q_0}(\X)\big)$ such that, for each $l\in \Nat$, the condition in \eqref{eq:GDisExpl00}, with respect to the appropriate data, holds for each $m\in \Nat$. Put also $\G^0_{\mathrm{L}}\coloneqq \big(\Fr_{\mathrm{L}}(\X)\backslash \G_{\mathrm{L}}\big)$. From this and from the corresponding properties in \cref{prop:GDisExpl} we get, with the use of \cref{prop:SubsZerMod}, that
        \begin{equation}
            \Mod_{\infty}^{\m}(\G^0_{\mathrm{L}})=0.
        \end{equation}
        Fix then $\gamma\in \G_{\mathrm{L}}$. In view of what is above, after applying \cref{prop:ArForm}, using \eqref{eq:GDisExpl00}, exploiting that $\gamma\not\in \Gamma^+_{Q_0}(\X)$, and performing some simple transformations, we can get the following chain:
        \begin{equation}\label{eq:FromGDisToLen1}
            \begin{gathered}
                \epsilon^2\int\limits_{\gamma}\rho =\epsilon^2\int\limits_{Q_0} \rho(x)\mathscr{N}\big(\gamma^{-1}(x)\big)\D \mathcal{H}^1_{\X}(x)+\epsilon^2\sum\limits_{l,m\in \Nat} \int\limits_{Q_{\e,l,m}} \rho(x)\mathscr{N}\big(\gamma^{-1}(x)\big)\D \mathcal{H}^1_{\X}(x)\leq \\
                \leq \sum\limits_{l,m\in \Nat} \epsilon\zeta_{\e,l} \int\limits_{Q_{\e,l,m}} \mathscr{N}\big(\gamma^{-1}(x)\big)\D \mathcal{H}^1_{\X}(x)=\\
                =\sum\limits_{l,m\in \Nat} \epsilon\zeta_{\e,l} \int\limits_{\gamma^{-1}(Q_{\e,l,m})} \ms_{\gamma}(t)\D\mathcal{L}^1(t)\leq\\
                \leq \sum\limits_{l,m\in \Nat}  \int\limits_{\gamma^{-1}(Q_{\e,l,m})} \ms^{p}_{\bm{f}_{\e,l,m}\circ \gamma}(t)\D\mathcal{L}^1(t)=\\
                =\sum\limits_{l,m\in \Nat}  \int\limits_{\R^d_p} \mathscr{N}\Big((\bm{f}_{\e,l,m}\circ \gamma)^{-1}(z)\cap \gamma^{-1}(Q_{\e,l,m})\Big) \D \mathcal{H}^1_p(z).
            \end{gathered}
        \end{equation}
        What we now need to check is that the emerging sum of the integrands on the right-hand side above does not exceed the multiplicity function for $\varphi_{\e}\circ \gamma$, so let $z\in \R^d$. Notice first, as the sets $Q_{\e,l,m}$, $l,m\in \Nat$, are disjoint, that
        \begin{equation}
        \begin{gathered}
            \sum\limits_{l,m\in \Nat}  \mathscr{N}\Big((\bm{f}_{\e,l,m}\circ \gamma)^{-1}(z)\cap \gamma^{-1}(Q_{\e,l,m})\Big)=\\
            =\mathscr{N}\Bigg(\bigcup\limits_{l,m\in \Nat} (\bm{f}_{\e,l,m}\circ \gamma)^{-1}(z)\cap \gamma^{-1}(Q_{\e,l,m})\Bigg).
            \end{gathered}
        \end{equation}
        But it is clear that the set in the parentheses on the right is a subset of $(\varphi_{\e}\circ\gamma)^{-1}(z)$, whence the function
        \begin{equation}
            \R^d_p\to \overline{\R}_{\geq 0}\colon z\mapsto \sum\limits_{l,m\in \Nat}  \mathscr{N}\Big((\bm{f}_{\e,l,m}\circ \gamma)^{-1}(z)\cap \gamma^{-1}(Q_{\e,l,m})\Big)
        \end{equation}
        is an $\mathcal{H}^1_p$-measurable function, as the sum of such functions, that is not greater than the function
        \begin{equation}
            \R^d_p\to \overline{\R}_{\geq 0}\colon z\mapsto \mathscr{N}\big((\varphi_{\e}\circ\gamma)^{-1}(z)\big),
        \end{equation}
        as desired. Combining this with the definition of length and using \eqref{eq:FromGDisToLen1}, we achieve exactly the inequality in \eqref{eq:FromGDisToLen0}. In other words, we verified that the necessary estimate holds for $\Mod_{\infty}^{\m}$-a.e. $\gamma\in \Fr_{\mathrm{L}}(\X)$.

        We are left to transfer the obtained conclusion to all rectifiable fragments. So, let $\G_{\mathrm{r}}^0$ be the family of all $\gamma\in \Fr_{\mathrm{r}}(\X)$ such that the inequality in \eqref{eq:FromGDisToLen0} does not hold. For every $\gamma\in \G_{\mathrm{r}}^0$, let $\widehat{\gamma}$ denote any Lipschitz fragment guaranteed for $\gamma$ as in \cref{prop:FromRectToLip}. Put then
        \begin{equation}
            \widehat{\G}^0_{\mathrm{r}}\coloneqq \big\{\widehat{\gamma}\mid \gamma\in \G_{\mathrm{r}}^0\big\}.
        \end{equation}
        From that proposition, with the use of the relevant definitions, we can easily get that
        \begin{equation}\label{eq:FromGDisToLen2}
            \Mod_{\infty}^{\m}(\G_{\mathrm{r}}^0)= \Mod_{\infty}^{\m}(\widehat{\G}_{\mathrm{r}}^0)
        \end{equation}
        Given $\gamma\in \G_{\mathrm{r}}^0$, from that proposition again, we derive that the inequality in \eqref{eq:FromGDisToLen0} also does not hold for $\widehat{\gamma}$, and hence we have that $\widehat{\gamma}\in \G^0_{\mathrm{L}}$. This implies that $\widehat{\G}_{\mathrm{r}}^0\subseteq \G_{\mathrm{L}}^0$ and hence that
        \begin{equation}\label{eq:FromGDisToLen3}
            \Mod_{\infty}^{\m}(\widehat{\G}^0_{\mathrm{r}})=0.
        \end{equation}
        Juxtaposing \eqref{eq:FromGDisToLen2} and \eqref{eq:FromGDisToLen3}, we finally deduce that
        \begin{equation}
            \Mod_{\infty}^{\m}(\G_{\mathrm{r}}^0)=0,
        \end{equation}
        which is precisely what we wanted to show.

        The proof is now finished.
    \end{proof}
\end{lem}

\begin{rem}
    As the proof of the above lemma demonstrates, a finer conclusion about properties of the constructed map can be made with the same premise, namely that there can be found a map $\varphi_{\e}$ that is not just $1$-Luzin--Lipschitz but is obtained by gluing the restrictions of globally defined $1$-Lipschitz maps to suitable subsets, which is clearly stronger due to potential obstacles to extending Lipschitz maps between the spaces under consideration. Such a formulation, however, would lose the geometric intuition underneath, so we prefer to stick to the proposed form.
\end{rem}

We are in a position to provide the strongest versions of our main results. These will clearly imply \cref{theo:MainRes} and \cref{theo:MainResEucl} once we recall \cref{theo:FromHausToWeav}.
\begin{theo}
    Let $d\in \mathbb{N}_0$. Suppose $\dim_{\mathcal{X}_{\m}}(\X)\leq d$. Then the assertions below hold.
    \begin{itemize}
        \item[$\rm I)$] Let $\varepsilon\in (0,1)$. Then there exists a $1$-Luzin--Lipschitz map $\varphi_{\varepsilon}\colon \X\to \mathbb{R}^d_{\mathsf{p}_{\infty}}$ such that
        \begin{equation}
            \Len_{\mathsf{p}_1}(\varphi_{\varepsilon}\circ \gamma)\geq (1-\varepsilon)\Len_{\X}(\gamma)\qquad \text{for $\Mod_{\infty}^{\m}$-a.e. $\gamma\in \Fr(\X)$}.
        \end{equation}
        \item[$\rm II)$] Suppose the space $(\X,\m)$ is infinitesimally Euclidean. Let $\varepsilon\in (0,1)$. Then there exists a $1$-Luzin--Lipschitz map $\varphi_{\varepsilon}\colon \X\to \mathbb{R}^d_{\mathsf{p}_2}$ such that
        \begin{equation}
            \Len_{\mathsf{p}_2}(\varphi_{\varepsilon}\circ \gamma)\geq (1-\varepsilon)\Len_{\X}(\gamma)\qquad \text{for $\Mod_{\infty}^{\m}$-a.e. $\gamma\in \Fr(\X)$}.
        \end{equation}
    \end{itemize}
    \begin{proof}
        For the proof it suffices to combine \cref{theo:FinDimBoundsGDis} with \cref{lem:FromGDisToLen}.
    \end{proof}
\end{theo}

{\bf Acknowledgments.} The study continues a research started by the author as part of his Master Thesis at the Moscow Institute of Physics and Technology.

\printbibliography
\end{document}